\numberwithin{equation}{section}
\newtheorem{thm}{Theorem}[section]
\newtheorem{lem}[thm]{Lemma}
\newtheorem{prop}[thm]{Proposition}
\newtheorem{cor}[thm]{Corolary}
\newtheorem{exam}[thm]{Example}
\newtheorem{dfn}[thm]{Definition}
\newtheorem{rem}[thm]{Remark}
\newcommand{\Der}          {{\mathrm {Der}}}
\newcommand{\Lie}       {\mathcal{L}}
\newcommand{\frakx}     {\mathfrak{X}}
\newcommand{\G}         {\mathcal{G}}
\newcommand{\toto}      {\rightrightarrows}
\renewcommand{\t}      {\mathsf{t}}
\newcommand{\C}            {\mathbb{C}}
\newcommand{\R}            {\mathbb{R}}
\newcommand{\E}            {\mathcal{E}}
\newcommand{\<}   {\langle}
\renewcommand{\>}   {\rangle}
\newcommand{\ct}        {{r,T^*}}
\newcommand{\GDer}      {\mathrm{GDer}}
\newcommand{\D}     {\mathcal{D}}
\begin{document}
\title[]
{Lie-Nijenhuis bialgebroids.}
\author[]{Thiago Drummond}
\address{Departamento de Matem\'atica, Instituto de Matem\'atica,
Universidade Federal do Rio de Janeiro,
Caixa Postal 68530, Rio de Janeiro, RJ, 21941-909, Brasil.
}
\email{drummond@im.ufrj.br}

\begin{abstract}
 We introduce Lie-Nijenhuis bialgebroids as Lie bialgebroids endowed with an additional derivation-like object. They give a complete infinitesimal description of Poisson-Nijenhuis groupoids, and key examples include Poisson-Nijenhuis manifolds, holomorphic Lie bialgebroids and flat Lie bialgebra bundles. To achieve our goal we develop a theory of ``generalized derivations'' and their duality, extending the well-established theory of derivations on vector bundles. 
\end{abstract}

\maketitle

\section{Introduction}
Since the seminal work of F. Magri and M. Morosi \cite{MaMo}, Poisson-Nijenhuis (PN) geometry has become an important area of research due to its
relationship to bi-Hamiltonian systems and with various other geometric structures (particularly, Lie bialgebroids \cite{Kos} and holomorphic Poisson geometry \cite{LMX2}). Its rich connection with Lie theory was clear since the early days \cite{Kos, KM} and it has grown deeper with lots of aspects being studied, e.g. integration to symplectic Nijenhuis groupoids \cite{StX}, symplectic realization \cite{Pet}, multiplicative integrable systems \cite{Bo}. The present paper fits into this context and its main contribution is the introduction of the concept of Lie-Nijenhuis bialgebroids. They give a complete description of the infinitesimal data associated to PN groupoids. Important examples of Lie-Nijenhuis bialgebroids come from PN manifolds, holomorphic Lie bialgebroids and flat Lie bialgebra bundles such as those studied in \cite{AN}.


It is well-known that a Poisson structure on a smooth manifold $M$ endows $T^*M$ with a Lie algebroid structure in such a way that $(TM, T^*M)$ is a Lie bialgebroid \cite{Mac-Xu}. The guiding principle in the paper is to describe PN geometry as some extra geometric data on the bialgebroid $(TM, T^*M)$ in a way that can be extended to arbitrary Lie bialgebroids. We show that this additional structure is encoded in a higher degree generalization of vector bundle derivations, and we develop the theory of these ``generalized derivations'' in this paper.

While a derivation on a vector bundle $E \to M$ is a pair $(\Delta, X)$, where $X \in \frakx(M)$ is a vector field and $\Delta: \Gamma(E) \to \Gamma(E)$ is a $\R$-linear operator satisfying the Leibniz equation
$$
\Delta(fu) = f\Delta(u) + (\Lie_{X}f) u,
$$
a \textit{generalized derivation of degree $k$} consists of a triple $\D = (D, l, r)$, where $r \in \Omega^k(M, TM)$,  $l \in \Omega^{k-1}(M, \mathrm{End}(E))$ and $D: \Gamma(E) \to \Omega^k(M, E)$ satisfying the Leibniz type equation
$$
D(fu) = fD(u) + df \wedge l(u) - \<df, r\> \otimes u,
$$
for $u \in \Gamma(E),\, f \in C^{\infty}(M)$.
Generalized derivations were studied before in \cite{BD} - although they were not named therein. A key fact established in that paper is that there is a correspondence between generalized derivations of degree $k$ on $E$ and elements of $\Omega^k(E, TE)$ satisfying an extra linearity condition. Moreover, since the space of these linear vector-valued forms are closed under the Fr\"olicher-Nijenhuis bracket \cite{BD2}, the space $\GDer(E)$ of generalized derivations has a graded Lie algebra structure. In degree zero, $\GDer^0(E)$ is the Lie algebra of derivations on $E$ and the correspondence above recovers the well known relationship between derivations and linear vector fields on $E$ (see \cite[\S 3.4]{Mac-book} and references therein).

A fundamental aspect of generalized derivations introduced in this paper is that there is graded Lie algebra isomorphism $\GDer(E) \to \GDer(E^*)$ which extends the dualization of derivations and the corresponding dualization of linear vector fields (see Thereom \ref{thm:duality}). In degree 1, which is the most relevant case for the study of PN geometry, the dual of $\D=(D,l,r) \in \GDer^1(E)$ is  $\D^\top=(D^\top, l^*, r) \in \GDer^1(E^*)$, determined by 
$$
\<D^\top_X(\mu), u\> = \Lie_X\<\mu, l(u)\> - \Lie_{r(X)}\<\mu, u\> - \<\mu, D_X(u)\>, 
$$
for  $u \in \Gamma(E), \, \mu \in \Gamma(E^*)$ and  $X \in \frakx(M)$. 

When the vector bundle is a Lie algebroid $A \to M$, one can impose additional compatibility conditions between generalized derivations and the Lie algebroid structure, e.g. in degree 0, one can ask for the vector bundle derivation to be a derivation of the Lie bracket. In general, this compatibility condition is expressed by a set of equations called \textit{IM equations} in the paper \cite{BD}. 

A \textit{Lie-Nijenhuis bialgebroid} is a Lie bialgebroid $(A, A^*)$ equipped with an element $\D \in \GDer^1(A)$ such that $[\D, \D]=0$ and both $\D$ and its dual $\D^\top$ satisfy the IM equations. The relationship with PN geometry is via a characterization of all the generalized derivations $\D$ on $TM$ which endow the Lie bialgebroid $(TM, T^*M)$ associated to a Poisson structure with a Lie-Nijenhuis structure. It turns out that they are all of form $\D^{r,T} = (D^{r,T}, r, r)$, where $r: TM \to TM$ is an endomorphism such that $(\pi, r)$ is Poisson-Nijenhuis and
$$
D^{r,T}_X(Y) = [Y, r(X)]- r([X,Y]), \,\, X, \, Y \in \Gamma(TM).
$$
Another important class of examples is holomorphic Lie bialgebroids. In this case, the generalized derivation $\D$ and its dual $\D^\top$ codify the Dolbeault operators on the underlying real Lie algebroids $A$ and $A^*$. Note that our viewpoint to holomorphic Lie bialgebroids, though equivalent, differs from the one in \cite{LSX} (see Proposition \ref{prop:hol_bialgbds} for the precise relationship between the two approachs).

The following result (see \S \ref{sec:pn_grpd} for a proof) connects Lie-Nijenhuis bialgebroids and Poisson-Nijenhuis groupoids:

\begin{thm}\label{thm:intro}
 Let $(\G, \pi) \toto M$ be a source 1-connected Poisson groupoid and $(A, A^*)$ the corresponding Lie bialgebroid. There is a 1-1 correspondence between $K: T\G \to T\G$ multiplicative endomorphisms such that $(\pi, K)$ is a PN structure and generalized derivations $\D$ of degree 1 such that $(A, A^*, \D)$ is a Lie-Nijenhuis bialgebroid. 
\end{thm}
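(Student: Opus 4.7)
The plan is to layer three ingredients: the integration from \cite{BD} of multiplicative $(1,1)$-tensors into IM generalized derivations, the compatibility of the Fr\"olicher--Nijenhuis bracket with that integration, and the duality $\D \mapsto \D^\top$ of Theorem \ref{thm:duality} applied separately to $A$ and $A^*$. The overall strategy is to run the groupoid $\leftrightarrow$ algebroid dictionary once on $T\G$ and once on the cotangent groupoid $T^*\G \toto A^*$, and glue the two halves through $\pi^\sharp$.

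First, by \cite{BD}, multiplicative $(1,1)$-tensors $K: T\G \to T\G$ correspond bijectively to $\D \in \GDer^1(A)$ satisfying the IM equations on $A$, as a restriction of the Lie functor on linear vector-valued forms. Since this correspondence intertwines the Fr\"olicher--Nijenhuis bracket with the graded Lie bracket on $\GDer(A)$, one obtains $[K,K]=0 \Leftrightarrow [\D,\D]=0$. This already matches the conditions ``$K$ multiplicative and Nijenhuis'' with ``$\D$ satisfies IM on $A$ and $[\D,\D]=0$''.

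Next, to handle the Poisson--Nijenhuis compatibility with $\pi$, I would exploit that multiplicativity of $\pi$ makes $\pi^\sharp: T^*\G \to T\G$ a morphism of Lie groupoids, where $T^*\G \toto A^*$ is the Mackenzie--Xu cotangent groupoid integrating $A^*$. The equation $K \pi^\sharp = \pi^\sharp K^*$ says that $K^*$ is $\pi^\sharp$-related to $K$, and combined with the multiplicativity of $K$ and $\pi$ this promotes $K^*$ to a multiplicative $(1,1)$-tensor on $T^*\G$. Applying the same BD correspondence now to the Lie algebroid $A^*$ of $T^*\G$ produces an element of $\GDer^1(A^*)$ satisfying the IM equations on $A^*$. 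The central step is then to identify this generalized derivation with $\D^\top$ via Theorem \ref{thm:duality}. Granted this identification, the vanishing of the Magri--Morosi concomitant is also captured: modulo the already-established $[\D,\D]=0$ and the Lie bialgebroid compatibility, it reduces to the IM equations for $\D^\top$ on $A^*$. Source-1-connectedness of $\G$ guarantees uniqueness of the integrations of $\D$ to $K$ and of $\D^\top$ to $K^*$, closing the bijection.

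The main obstacle is precisely this identification: showing that the purely algebraic duality $\D \mapsto \D^\top$ of the excerpt, defined by
$$
\<D^\top_X(\mu), u\> = \Lie_X\<\mu, l(u)\> - \Lie_{r(X)}\<\mu, u\> - \<\mu, D_X(u)\>,
$$
coincides with the differentiation of the transpose operation $K \mapsto K^*$ across $\pi^\sharp$ via the cotangent Lie groupoid structure. This amounts to a careful compatibility check between Theorem \ref{thm:duality} and the Mackenzie--Xu duality of cotangent bialgebroids; once settled, the remaining content is routine book-keeping with the IM equations and the Fr\"olicher--Nijenhuis bracket.
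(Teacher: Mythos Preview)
Your first two ingredients match the paper exactly: the BD correspondence gives $K \leftrightarrow \D$ with $\D$ an IM $(1,1)$-tensor on $A$, and $N_K = 0 \Leftrightarrow [\D,\D]=0$. The third ingredient, however, does not work as written.

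Two concrete problems. First, $T^*\G \toto A^*$ has $A^*$ as its \emph{base}, not as its Lie algebroid; the Lie algebroid of $T^*\G$ is (canonically) $T^*A \to A^*$. So there is no BD correspondence producing an element of $\GDer^1(A^*)$ from a multiplicative object on the groupoid $T^*\G$. Second, and more fundamentally, $K^*: T^*\G \to T^*\G$ is a groupoid morphism of the cotangent groupoid \emph{whenever} $K$ is multiplicative on $T\G$: this follows from the pairing definition of the cotangent multiplication and uses nothing about $\pi$. Hence ``$K^*$ multiplicative on $T^*\G$'' carries no information about the PN compatibility, and the relation $K\pi^\sharp = \pi^\sharp K^*$ alone encodes only a fragment of it. The step you flag as ``the main obstacle'' and then describe as leaving only ``routine book-keeping'' is in fact the entire content of the theorem; the framework you set up around it does not isolate the right object to which the obstacle could apply.

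The paper takes a different, computational route. It regards the Magri--Morosi concomitant as a $(2,1)$-tensor $\tau_\pi^K(U,\xi_1,\xi_2) = \langle\xi_2, R_\pi^K(U,\xi_1)\rangle$ on $\G$ and computes its IM data directly. Lemma~\ref{lem:IM4} shows that $\pi^\sharp K^* = K\pi^\sharp$ is equivalent to skew-symmetry of $[\cdot,\cdot]_{\D^\top}$; then Lemma~\ref{lem:tau_lr} and Proposition~\ref{prop:Lie_tau} show that the IM components $(\mathfrak{D},\mathfrak{l},\mathfrak{r})$ of $\tau_\pi^K$ are expressed in terms of quantities $\mathcal{Q}_D, \mathcal{Q}_l, \mathcal{Q}_r$ which are precisely the failures of \eqref{IM1}, \eqref{IM2}, \eqref{IM3} for $\D^\top$ on $(A^*, [\cdot,\cdot]_*, \rho_*)$. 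Since $\G$ is source-connected, $\tau_\pi^K \equiv 0$ iff its IM tensor vanishes, and the bijection follows. No cotangent-groupoid duality is invoked; the map $\D \mapsto \D^\top$ enters only as the language in which the IM obstructions on $A^*$ are named.
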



When $(M, \pi, r)$ is a PN-manifolds and $T^*M$ is integrable to a symplectic groupoid, Theorem \ref{thm:intro} recovers the integration of Poisson-Nijenhuis structures to symplectic-Nijenhuis groupoids established in \cite{StX}. In the holomorphic case, Theorem \ref{thm:intro} reproduces the integration of holomorphic Lie bialgebroids to holomorphic Poisson groupoids obtained in \cite{LSX} (see Theorem \ref{thm:holomorphic}).

It is import to observe that $\D^{r, T}$ and its dual are related to the tangent and cotangent lifts of $r$ under the correspondence between generalized derivations and linear vector-valued forms (see Theorem \ref{thm:lifts}). In this way, for a PN-manifold $(M, \pi, r)$, we obtain a characterization of the compatibility between $r$ and $\pi$ as a condition for the cotangent lift of $r$, $r^{\rm ctg}: T(T^*M) \to T(T^*M)$, being a Lie algebroid morphism. This allows us to give a simple alternative proof of the foundational result of Y. Kosmann-Schwarzbach characterizing PN structures as Lie bialgebroid structures on $(TM_r, T^*M)$, where $TM_r$ is a Lie algebroid structure on $TM$ with $r: TM \to TM$ as anchor and the bracket $[X,Y]_r = [r(X),Y]+[X, r(Y)]-r([X,Y])$ (see Proposition \ref{prop:alt_proof_kos}). 

We would like to mention that generalized derivations play a key role in the study of many other geometric structures. In particular, they are being explored in some work in progress to study generalizations of PN geometry to Dirac structures \cite{BDN} and holomorphic multiplicative structures on Lie groupoids \cite{Dru}.


The paper is organized as follows. In Section 2, we present the theory of generalized derivations and their duality recalling the correspondence with linear vector-valued forms. We also study PN structures under this perspective. In Section 3, we show how the classical theory of complete lifts of vector-valued forms can be put into the formalism of generalized derivations. In Section 4, we study properties of Lie-Nijenhuis bialgebroids showing how they induce PN structures on the base $M$ and produce an hierarchy of Lie-Nijenhuis bialgebroids. We also prove Theorem \ref{thm:intro} in this section. Finally, in Section 5 we apply our main result to holomorphic Lie bialgebroids.

\begin{rem}\em
\noindent

\begin{itemize}
 \item[(1)]For the sake of clarity, we note that the terminology of ``Lie-Nijenhuis bialgebroids'' used to name our main object of study is not motivated by the notion of Nijenhuis operators on Courant algebroids as studied by many authors (see \cite{Kos2} and references therein). They are related, though, in the case $l^2 = \lambda \, \mathrm{id}_A$, for some $\lambda \in \R$. In this case, there is an associated Nijenhuis operator on the double $A \oplus A^*$ (see Remark \ref{rem:courant}). In this direction, it is an interesting question to understand how to characterize Lie-Nijenhuis bialgebroids by means of some geometric structure on their doubles. We intend to adress this question elsewhere.
\item[(2)] The problem of characterizing PN groupoids infinitesimally was also treated in the paper of A. Das \cite{Das}. His main result characterizes PN groupoids infinitesimally as a Lie-bialgebroid $(A, A^*)$ endowed with a linear endomorphism $K_A: TA \to TA$ such that $(K_A, \pi_A)$ is also PN, where $\pi_A$ is the linear Poisson-structure corresponding to the Lie algebroid structure on $A^*$. To compare our viewpoints, note that, for the Poisson-groupoid $(A, \pi_A)$ (i.e. $A$ seen as a Lie groupoid with the multiplication given by fiberwise addition) his result does not give any new information whereas Theorem \ref{thm:intro} characterizes linear PN structures $(\pi_A,K_A)$ in terms of generalized derivations. So, in particular, Theorem \ref{thm:intro} recovers the main result of \cite{Das}.
\end{itemize}
\end{rem}

\subsection*{Acknowledments}
The author would like to thank Henrique Bursztyn for helpful conversations and Janusz Grabowski for the reference \cite{GU3} on cotangent lifts of vector valued forms.

\pagebreak

\section{Generalized derivations}
\subsection{Preliminaries}
\medskip \subsubsection{Definition and main properties}
Let $q:E \to M$ be a vector bundle over a smooth manifold. A derivation $\Delta: \Gamma(E) \to \Gamma(E)$ on $E$ is an $\R$-linear operator satisfying the Leibniz equation
\begin{equation}\label{eq:Delta_Leibniz}
\Delta(fu) = f \Delta(u) + (\Lie_{\sharp(\Delta)}f) u,
\end{equation}
where $\sharp(\Delta) \in \frakx(M)$ is called the \textit{symbol} of $\Delta$. The space of derivations on $E$ will be denoted by $\mathrm{Der}(E)$. In the following, we shall present a generalization of derivations on $E$ and study its properties.
\begin{dfn}\em
 A \textit{generalized derivation of degree $k$} on $E$ is a triple $\D=(D,l,r)$, where $D:\Gamma(E) \to \Omega^k(M, E)$ is a $\R$-linear map, $l \in \Omega^{k-1}(M,\mathrm{End}(E))$ and $r \in \Omega^k(M, TM)$ satisfying the following Leibniz-type equation: for $u \in \Gamma(E), \, f \in C^\infty(M)$,
\begin{align}\label{eq:D_Leibniz}
 D(fu) & = fD(u) + df \wedge l(u) - r^*(df) \otimes u.
\end{align}
Here $r^*:T^*M \to  \wedge^k T^*M$ is the map dual to $r$. The pair $(l,r)$ is called the \textit{symbol} of $\D$ and we will denote it by $\sharp(\D)$.
\end{dfn}

We will denote by $\mathrm{GDer}^k(E)$ the space of generalized derivations of degree $k$ on $E$. The equation \eqref{eq:D_Leibniz} will be refered to simply as the Leibniz equation for $\D$.

\begin{lem}\label{lem:ses}
The symbol map defines a short exact sequence of vector spaces:
$$
0 \to \Omega^k(M, \mathrm{End}(E)) \to \mathrm{GDer}^k(E) \to \Omega^{k-1}(M, \mathrm{End}(E)) \oplus \Omega^k(M, TM)  \to 0.
$$
\end{lem}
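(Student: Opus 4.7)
The plan is to identify the two maps, verify exactness at the three slots, and note that the only substantive step is surjectivity of the symbol map. The right-hand arrow is the symbol $\D \mapsto \sharp(\D) = (l,r)$. The left-hand arrow sends $T \in \Omega^k(M, \End{E})$ to the triple $(D_T, 0, 0) \in \GDer^k(E)$, where $D_T : \Gamma(E) \to \Omega^k(M, E)$ is the $C^\infty(M)$-linear operator obtained by pointwise contraction of $T$ with the argument. Since $D_T$ is tensorial, the Leibniz equation \eqref{eq:D_Leibniz} with $l = r = 0$ holds automatically. Injectivity of the left arrow is then immediate, as $D_T$ determines $T$ pointwise.

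For exactness in the middle, I would argue that if $(D, l, r)$ has vanishing symbol, then \eqref{eq:D_Leibniz} collapses to $D(fu) = f D(u)$; hence $D$ is $C^\infty(M)$-linear and arises as $D_T$ for a unique $T \in \Omega^k(M, \End{E})$. For surjectivity of the symbol, given $(l, r)$, I would pick any linear connection $\nabla : \Gamma(E) \to \Omega^1(M, E)$ and extend $l$ and $r^*$ to operators $\Omega^1(M, E) \to \Omega^k(M, E)$ via the decomposable rules
$$\alpha \otimes v \longmapsto \alpha \wedge l(v), \qquad \alpha \otimes v \longmapsto r^*(\alpha) \otimes v.$$
Setting $D(u) := l(\nabla u) - r^*(\nabla u)$ and using $\nabla(fu) = f \nabla u + df \otimes u$, one directly reads off \eqref{eq:D_Leibniz}, producing the desired lift $(D, l, r) \in \GDer^k(E)$ of the prescribed symbol.

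I do not expect any real obstacle: the exactness at the outer two slots is purely formal, and the surjectivity is resolved by the one-line connection construction above (existence of $\nabla$ follows from a partition-of-unity argument). The only place requiring a bit of care is keeping straight how $l$ and $r^*$ act on $E$-valued one-forms, but this is notational bookkeeping rather than a genuine difficulty.
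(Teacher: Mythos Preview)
Your proof is correct and follows essentially the same approach as the paper: both verify that vanishing symbol forces $D$ to be tensorial, and both establish surjectivity by choosing a connection $\nabla$ and building a lift. Your formula $D(u) = l(\nabla u) - r^*(\nabla u)$ is in fact the coordinate-free rewriting of the paper's explicit expression
\[
D^\nabla_{(X_1,\dots, X_k)}(u) =\sum_{i=1}^k (-1)^{i+1} l_{(X_1,\dots, \widehat{X_i}, \dots, X_k)}(\nabla_{X_i} u) - \nabla_{r(X_1,\dots, X_k)} u,
\]
so the two constructions coincide.
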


\begin{proof}
The Leibniz equation for $\D$ implies that,  if $l=0, r=0$, then $D$ is tensorial. 
The surjectiveness of the symbol map is a consequence of the following construction: given a pair $(l, r)$ and a connection $\nabla: \Gamma(TM) \times \Gamma(E) \to \Gamma(E)$, define $(D^\nabla, l, r) \in \GDer^k(E)$ as follows:
\begin{equation}\label{def:D_nabla}
D^\nabla_{(X_1,\dots, X_k)}(u) =\sum_{i=1}^k (-1)^{i+1} l_{(X_1,\dots, \widehat{X_i}, \dots, X_k)}(\nabla_{X_i} u) - \nabla_{r(X_1,\dots, X_k)} u.
\end{equation}
\end{proof}

\begin{rem}\label{rem:symbol}\em
 Given a generalized derivation $\D=(D,l,r)$, it is important to note that $D$ alone does not determine the symbol $(l,r)$ as one can see by considering $(0, \mathrm{id}_{TM}, \mathrm{id}_{TM})$, $(0,0,0) \in \GDer^1(TM)$. In any case, it follows from the Leibniz equation that $(D,r)$ (resp. $(D,l)$) determines $l$ (resp. $r$).
\end{rem}

There is a natural bijection between $\GDer^0(E)$ and $\mathrm{Der}(E)$. Indeed, for $k=0$, $\D = (D, r)$, where $D: \Gamma(E) \to \Gamma(E)$ and $r \in \frakx(M)$. If we define $\Delta = - D$, then \eqref{eq:D_Leibniz} coincides with the Leibniz equation for derivations \eqref{eq:Delta_Leibniz}. Under this identification, the short exact sequence of Lemma \ref{lem:ses} for $k=0$ recovers the Atiyah sequence:
$$
0 \to  \mathrm{End}(E) \to \mathrm{Der}(E) \to \frakx(M) \to 0.
$$
%

The Leibniz equation for a  generalized derivation $\D=(D,l,r) \in \GDer^k(E)$ allows one to extend $D$ to a $\R$-linear operator $D: \Omega^\bullet(M, E) \to \Omega^{\bullet+k}(M,E)$ by:
\begin{equation}\label{dfn:D_ext}
D(\alpha \otimes u) = \alpha \wedge D(u) + (-1)^{j}(d\alpha  \wedge l(u) - (-1)^{j(k-1)} \Lie_{r}\alpha \otimes u),
\end{equation}
where $\alpha \in \Omega^j(M)$, $u \in \Gamma(E)$. Here $\Lie_r: \Omega^\bullet(M) \to \Omega^{\bullet+k}(M)$ is the Lie derivative operator corresponding to $r$ (we refer to \cite[\S 2.8]{nat} for further details).  Any element $l \in \Omega^{k-1}(M, \mathrm{End}(E))$ can also be extended to $l: \Omega^{\bullet}(M, E) \to \Omega^{\bullet+k-1}(M,E)$ by the formula
$
l(\alpha \otimes u) = \alpha \wedge l(u),
$
so that the Leibniz equation for $\D$ extends to 
$$
D(\alpha \wedge \eta) = \alpha \wedge D(\eta) + (-1)^{i+j}(d\alpha  \wedge l(\eta) - (-1)^{(i+j)(k-1)} \Lie_{r}\alpha \wedge \eta),
$$
for $\alpha \in \Omega^j(M),\, \eta \in \Omega^i(M, E)$. 

The following result (see \cite[Cor.~5.5]{BD}) shows how the graded commutator of $\mathrm{End}_\R(\Omega(M,E))$ endows $\GDer^\bullet(E)$ with a graded Lie algebra structure. 
\noindent
\begin{prop}\label{prop:graded_lie_str}
Let $\D_i=(D_i, l_i, r_i) \in \GDer^{k_i}(E)$ be generalized derivations of degree $k_i$, for $i=1,2$. The triple $(D,l,r)$ defined as: $r=[r_1,r_2]$, the Fr\"olicher-Nijenhuis bracket of $r_1$ and $r_2$, and 
\begin{align*}
D & = (D_2 \circ D_1 - (-1)^{k_1k_2} D_1\circ D_2)|_{\Gamma(E)}\\
l & = \left([D_2, l_1] - (-1)^{k_1k_2}[D_1,l_2]\right)|_{\Gamma(E)}
\end{align*}
is a generalized derivation of degree $k_1+k_2$. Moreover, the bracket $[\D_1, \D_2] = (D,l,r)$ gives $\GDer^\bullet(E)$ a graded Lie algebra structure. 
\end{prop}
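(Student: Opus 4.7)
The strategy is to realize $[\D_1,\D_2]$ as a graded commutator inside the associative algebra $\mathrm{End}_\R(\Omega^\bullet(M,E))$, so that graded antisymmetry and the graded Jacobi identity become automatic consequences of the associative structure. The only genuine work is then to verify that the graded commutator of the \emph{extended} operators $D_1,D_2$ is itself the extension of a generalized derivation, with the symbol prescribed in the statement.

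First I would extend $D_1,D_2$ to $\Omega^\bullet(M,E)$ by \eqref{dfn:D_ext} and form
$$
[D_1,D_2]\;:=\;D_2\circ D_1-(-1)^{k_1k_2}D_1\circ D_2,
$$
an $\R$-linear operator of degree $k_1+k_2$. Next I would evaluate $[D_1,D_2](fu)$ for $u\in\Gamma(E)$ and $f\in C^\infty(M)$, applying the Leibniz equation for $D_1$ to $fu$ and then the extended Leibniz equation for $D_2$ to each of the three resulting pieces $f\,D_1(u)$, $df\wedge l_1(u)$ and $\Lie_{r_1}f\otimes u$ (and symmetrically for $D_1 D_2$). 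Grouping the output according to whether a factor of $f$, of $df$, or neither appears, one sees: (i) the terms proportional to $f$ assemble into $f\,[D_1,D_2](u)$; (ii) the $df\wedge$ terms assemble into $df\wedge l(u)$ with $l=[D_2,l_1]-(-1)^{k_1k_2}[D_1,l_2]$, the graded commutators being those of operators on $\Omega^\bullet(M,E)$; (iii) the remaining terms are a double-Lie-derivative piece $-\bigl(\Lie_{r_1}\Lie_{r_2}-(-1)^{k_1k_2}\Lie_{r_2}\Lie_{r_1}\bigr)f\otimes u$, plus pairs of the form $\Lie_{r_i}(df)\wedge l_j(u)$ and $d(\Lie_{r_i}f)\wedge l_j(u)$ that cancel after using $\Lie_r\circ d=(-1)^{\deg r}d\circ\Lie_r$, plus $d^2 f$-terms that vanish. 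The Fr\"olicher--Nijenhuis identity $[\Lie_{r_1},\Lie_{r_2}]=\Lie_{[r_1,r_2]}$ then converts the surviving scalar piece into $-[r_1,r_2]^*(df)\otimes u$, exactly matching the anchor part of \eqref{eq:D_Leibniz} for $r=[r_1,r_2]$. This shows that $[\D_1,\D_2]=(D,l,r)\in\GDer^{k_1+k_2}(E)$.

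For the graded Lie algebra structure, graded antisymmetry is immediate from the commutator formula. The graded Jacobi identity then follows from the standard argument in any associative algebra, once one observes that the graded commutator of the extended $D_i$ on $\Omega^\bullet(M,E)$ coincides with the extension of $[\D_1,\D_2]$ just constructed. This compatibility is forced by the uniqueness of the extension: an operator on $\Omega^\bullet(M,E)$ satisfying the extended Leibniz equation is determined by its restriction to $\Gamma(E)$ together with its symbol (cf.\ Remark~\ref{rem:symbol}).

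The main obstacle is purely combinatorial. The signs $(-1)^{k_1}$, $(-1)^{k_1k_2}$ and $(-1)^{i(k-1)}$ proliferate when the extended Leibniz rule is applied to forms of positive degree such as $D_1(u)\in\Omega^{k_1}(M,E)$, $df\wedge l_1(u)$ and $\Lie_{r_1}f\otimes u$, and one has to confirm both that the $\Lie_{r_i}(df)$ vs.\ $d(\Lie_{r_i}f)$ pairs really cancel and that the surviving double Lie derivative reassembles into $\Lie_{[r_1,r_2]}f$ with the right sign. Once the bookkeeping is done, everything else in the statement is formal.
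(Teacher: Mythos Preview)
Your approach is correct and matches exactly the one the paper indicates: the sentence introducing the proposition explicitly says the result follows from realizing the bracket as the graded commutator in $\mathrm{End}_\R(\Omega(M,E))$, and the paper then cites \cite[Cor.~5.5]{BD} rather than giving its own proof. Your outline---verifying the Leibniz equation for $[D_1,D_2](fu)$ by expanding via the extended Leibniz rule, using $d\Lie_{r}=(-1)^{k}\Lie_{r}d$ to cancel the mixed terms and $[\Lie_{r_1},\Lie_{r_2}]=\Lie_{[r_1,r_2]}$ for the anchor part, and then deducing Jacobi from associativity together with the observation that the extension of $[\D_1,\D_2]$ coincides with the graded commutator of the extensions---is precisely how that cited argument goes.
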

Note that the map $\Der(E) \ni \Delta \mapsto - \Delta \in \GDer^0(E)$ is a Lie algebra isomorphism.

Let us give some illustrative examples.
\begin{exam}\em
An element $\D \in \mathrm{GDer}^1(E)$ with symbol $l=\mathrm{id}_E$ and $r=0$ is the same as a connection $\nabla: \Gamma(TM) \times \Gamma(E) \to \Gamma(E)$ on $E$: simply define
 $$
 \nabla_X u = D_X(u).
 $$
The extension $D: \Omega^\bullet(M, E) \to \Omega^{\bullet+1}(M,E)$ is the Koszul differential corresponding to $\nabla$. Also,  $\frac{1}{2}[\D,\D] \in \Omega^2(M, \mathrm{End}(E))$ is the curvature of $\nabla$.  
\end{exam}

\begin{exam}\label{exam:tang_D}\em
For $r \in \Omega^k(M, TM)$, define $D^{r,T}: \Gamma(TM) \to \Omega^k(M, TM)$ by:
\begin{equation}
D^{r,T}(Y)= [Y, r].
\end{equation}
A straightforward calculation shows that $(D^{r,T}, \widetilde{r}, r) \in \mathrm{GDer}^k(TM)$, where $\widetilde{r} \in \Omega^{k-1}(M, \mathrm{End}(TM))$ is given by
$$
\widetilde{r}(X_1, \dots, X_{k-1}) = r(\cdot\,,\, X_1, \dots, X_{k-1}).
$$
The extension $D^{r,T}: \Omega^\bullet(M, TM) \to \Omega^{\bullet+k}(M, TM)$ is $D^{r,T}(\eta) = [r,\eta]$, where $[\cdot, \cdot]$ is the Fr\"olicher-Nijenhuis bracket. One can check that the graded Jacobi identity for the Fr\"olicher-Nijenhuis bracket implies that $r \mapsto D^{r,T}$ defines a graded Lie algebra monomorphism $\Omega^{\bullet}(M, TM) \hookrightarrow \GDer^\bullet(TM)$.
\end{exam}

An important example of generalized derivation comes from the Dolbeault operator on holomorphic vector bundles. This will be treated in \S \ref{sec:hol}.

\medskip \subsubsection{Duality.}
There is a fundamental duality construction for generalized derivations that extends the well-known relationship between derivations on $E$ and on $E^*$. We now present this construction and show that it determines a graded Lie algebra isomorphism between $\GDer^\bullet(E)$ and $\GDer^\bullet(E^*)$.

\begin{dfn}\em
Let $\D=(D,l,r) \in \mathrm{GDer}^k(E)$. Its dual $\D^\top \in \GDer^k(E^*)$ is defined as the triple $(D^\top, l^\top,r)$, where $D^\top$ is determined by the following equation on $\Omega^k(M)$: for $u \in \Gamma(E), \, \mu \in \Gamma(E^*)$,
\begin{equation}\label{eq:dual_D}
\<D^\top(\mu), u\> = d\<\mu, l(u)\> - \Lie_r\<\mu,u\> - \<\mu, D(u)\>, \,\,\,  
\end{equation}
and $l^\top$ is given by
$
\<l^\top(\mu), u\> = \<\mu, l(u)\>.
$
\end{dfn}

It will be useful to have a more detailed formula for $k=1$. In this case,  given $\D = (D,l,r) \in \GDer^1(E)$, 
\begin{equation}\label{eqn:1_duality}
\<D^\top_X(\mu), u\> = \Lie_X \<\mu, l(u)\> - \Lie_{r(X)}\<\mu, u\> - \<\mu, D_X(u)\>.
\end{equation}

\begin{exam}\em
For $k=0$, consider the derivations $\Delta$ on $E$ and $\Delta^\top$ on $E^*$ given by $\Delta = -D$, $\Delta^\top = -D^\top$. Using these derivations, equation \eqref{eq:dual_D} can be rewritten as
\begin{equation}\label{eq:dual_derivation}
\<\Delta^\top(\mu), u\> = \Lie_{\sharp(\Delta)} \<\mu, u\> - \<\mu, \Delta(u)\>.
\end{equation}
This is exactly the dualization operation for derivations on $E$.
\end{exam}

\begin{exam}\em
Let $\nabla: \Gamma(TM) \times \Gamma(E) \to \Gamma(E)$ be a connection on $E$ and consider the corresponding generalized derivation of degree 1, $D_X(u) = \nabla_X u$. A direct computation shows that $D^\top$ is the generalized derivation corresponding to the dual connection on $E^*$.
\end{exam}

\begin{exam}\em
For $r \in \Omega^k(M, TM)$ we shall denote by $D^{r,T^*} \in \mathrm{GDer}^k(T^*M)$ the generalized derivation dual to $D^{r,T}$ (see Example \ref{exam:tang_D}). When $k=1$, a straightforward computation shows that: for $\alpha \in \Omega^1(M)$, 
\begin{equation}\label{D_ctg}
D^\ct_X(\alpha) = \Lie_X(r^*\alpha) - \Lie_{r(X)}\alpha.
\end{equation}
\end{exam}

Our main result regarding dualization of generalized derivations establishes that it is a graded Lie algebra isomorphism.

\begin{thm}\label{thm:duality}
 The map \eqref{eq:dual_D} $\GDer^\bullet(E) \ni \D \mapsto \D^\top \in \GDer^\bullet(E^*)$ is a graded Lie algebra isomorphism.
\end{thm}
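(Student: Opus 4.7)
My plan is to prove the theorem in three stages. First, verify well-definedness of $\D^\top$ as an element of $\GDer^k(E^*)$. Second, show involutivity $(\D^\top)^\top = \D$, which gives bijectivity (together with $\R$-linearity, which is immediate). Third, establish bracket preservation $[\D_1, \D_2]^\top = [\D_1^\top, \D_2^\top]$, which is the main content of the theorem.

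For well-definedness, fix $\mu \in \Gamma(E^*)$ and expand $\<D^\top(\mu), fu\>$ via \eqref{eq:dual_D}. Using tensoriality of $l$ (so $l(fu) = fl(u)$), the Leibniz equation \eqref{eq:D_Leibniz} for $D$, and the derivation properties of $d$ and $\Lie_r$ on functions (noting $\Lie_r f = \<df, r\>$), the terms involving $df \wedge \<\mu, l(u)\>$ and $\<df, r\> \<\mu, u\>$ cancel pairwise, leaving $f \<D^\top(\mu), u\>$. This shows $\<D^\top(\mu), \cdot\>$ is $C^\infty(M)$-linear, so $D^\top(\mu) \in \Omega^k(M, E^*)$ is well-defined. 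A similar but shorter expansion of $\<D^\top(f\mu), u\>$, now applying the graded Leibniz rules to pull $f$ outside of $d$ and $\Lie_r$, yields exactly $\<fD^\top(\mu) + df \wedge l^\top(\mu) - r^*(df) \otimes \mu,\, u\>$, which is the Leibniz equation \eqref{eq:D_Leibniz} for $\D^\top$ with symbol $(l^\top, r)$. Involutivity then follows by direct substitution: $(l^\top)^\top = l$ is tautological, and plugging $\D^\top$ into \eqref{eq:dual_D} to form $(D^\top)^\top$ produces two pairs of canceling correction terms, leaving $(D^\top)^\top(u) = D(u)$.

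For the bracket preservation, the conceptual step is to upgrade \eqref{eq:dual_D} to a graded identity for the extended operators $D\colon \Omega^\bullet(M, E) \to \Omega^{\bullet+k}(M, E)$ and $D^\top\colon \Omega^\bullet(M, E^*) \to \Omega^{\bullet+k}(M, E^*)$ defined by \eqref{dfn:D_ext}. Concretely, I would prove that for homogeneous $\eta \in \Omega^p(M, E^*)$ and $\omega \in \Omega^q(M, E)$,
$$
\<D^\top(\eta), \omega\> + (-1)^{pk} \<\eta, D(\omega)\> \; = \; d\<l^\top(\eta), \omega\> + (-1)^{p+q}\, d\<\eta, l(\omega)\> - (-1)^{p(k-1)}\Lie_r \<\eta, \omega\>,
$$
modulo sign conventions that are fixed during the calculation. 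This reduces to \eqref{eq:dual_D} on generators by graded Leibniz-type manipulations using the wedge pairing. Iterating this identity twice then converts $\<(D_2 D_1 - (-1)^{k_1 k_2} D_1 D_2)^\top(\mu), u\>$ into the corresponding expression with $D_i$ replaced by $D_i^\top$ (signs coming out by a straightforward sign count), and the mixed $l$-terms recombine into the $l$-component $[D_2^\top, l_1^\top] - (-1)^{k_1 k_2}[D_1^\top, l_2^\top]$ of $[\D_1^\top, \D_2^\top]$ dictated by Proposition \ref{prop:graded_lie_str}. The $r$-component is automatic: the bracket formula assigns $[r_1, r_2]$ (Fr\"olicher--Nijenhuis) to both sides.

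The main obstacle is the combinatorial bookkeeping in Step 3: the three-term extension formula \eqref{dfn:D_ext} means that each pass through the pairing produces $d\alpha$- and $\Lie_r\alpha$-contributions in addition to the principal $\alpha \wedge D(u)$-term, all of which must be tracked through a double commutator, with the graded Jacobi identity for the Fr\"olicher--Nijenhuis bracket entering when matching the $r$-parts. A more conceptual alternative—suggested by the correspondence from \cite{BD} between $\GDer^k(E)$ and linear vector-valued $k$-forms on $E$, and by the fact (from \cite{BD2}) that these are closed under the Fr\"olicher--Nijenhuis bracket—would be to exhibit $\D \mapsto \D^\top$ as the restriction to linear forms of a natural isomorphism $\Omega^k(E, TE)_{\mathrm{lin}} \to \Omega^k(E^*, TE^*)_{\mathrm{lin}}$ coming from the canonical pairing of $TE$ and $TE^*$ as double vector bundles over $TM$, and then invoke the naturality of the Fr\"olicher--Nijenhuis bracket under this identification. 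I would pursue the direct calculation for concreteness but keep this geometric picture in mind as a sanity check.
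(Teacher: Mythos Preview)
Your approach is essentially the same as the paper's: the paper also reduces to comparing $D$-components (the $r$-component being automatic), derives the extended duality identities \eqref{eq:extension_1}--\eqref{eq:extension_2} (special cases of your graded pairing identity where one argument has degree $0$, which is all that is actually needed), and then computes $\langle [D_2^\top, D_1^\top](\mu), v\rangle$ directly to match $\langle [D_2, D_1]^\top(\mu), v\rangle$. The only difference is that you are more thorough in explicitly checking well-definedness and involutivity of $\D \mapsto \D^\top$, which the paper takes for granted.
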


\begin{proof}
Let $\D_i=(D_i,l_i,r_i) \in \GDer^{k_i}(E)$, for $i=1,2$ and let us compare the components of $[\D_1^\top, \D_2^\top]$ with those of  $[\D_1, \D_2]^\top$. First, note that both $r$-components are $[r_1,r_2]$. So, it suffices to compare their $D$-components (see Remark \ref{rem:symbol}). For this, the following equations on $\Omega^{j+k}(M)$ will be necessary (they follow  from formula \eqref{dfn:D_ext} for the extension $D^\top: \Omega^j(M,E^*) \to \Omega^{j+k}(M, E^*)$ and \eqref{eq:dual_D}):
\begin{align}
\label{eq:extension_1} 
\<D^\top(\eta), v\> & = (-1)^j\left(d\<\eta, l(v)\>  - (-1)^{j(k-1)} \Lie_r\<\eta, v\>\right) - \<\eta, D(v)\>\\
\label{eq:extension_2}
\<D^\top(\mu), \gamma\> & = 
 (-1)^{j(k-1)} d\<\mu, l(\gamma)\> - \Lie_r\<\mu, \gamma\> - (-1)^{jk}\<\mu, D(\gamma)\>,
\end{align}
for $\mu \in \Gamma(E^*), v \in \Gamma(E)$, $\eta \in \Omega^j(M, E^*), \, \gamma \in \Omega^j(M, E)$. 

Now,
$$
\<[D_2^\top, D_1^\top](\mu), v\> = \<D_2^\top(D_1^\top(\mu)), v\> - (-1)^{k_1k_2}\<D_1^\top(D_2^\top(\mu)),v\> = A -(-1)^{k_1k_2} B.
$$
From \eqref{eq:extension_1}, \eqref{eq:extension_2} and  using that $d \Lie_{r_2} = (-1)^{k_2} \Lie_{r_2} d$, one finds that
\begin{align*}
 A 
&   = -(-1)^{k_1}d \Lie_{r_1} \<\mu, l_2(v)\> - (-1)^{k_2k_1}(-1)^{k_2} d\Lie_{r_2}\<\mu, l_1(v)\> + \Lie_{r_1}\<\mu,D_2(v)\>\\
& \hspace{-10pt} + (-1)^{k_1k_2}\Lie_{r_2}\<\mu, D_1(v)\>  - (-1)^{k_1k_2} d\<\mu, D_1(l_2(v))\> - (-1)^{k_2(k_1-1)}d\<\mu, l_1(D_2(v))\>   \\
& \hspace{-10pt}  + (-1)^{k_1k_2}\Lie_{r_2} \Lie_{r_1} \<\mu, v\> + (-1)^{k_1k_2}\<\mu, D_1(D_2(v))\>.
 \end{align*}
As $B$ is obtained just by permutation of the indices, one can check that the first four terms of $A$ will cancel out with the corresponding terms in $(-1)^{k_1k_2}B$. By collecting the remaining terms, one has that
\begin{align*}
\<[D_2^\top, D_1^\top](\mu), v\> & 
= d\<\mu, [D_2,l_1] - (-1)^{k_1k_2} [D_1,l_2]\>  - \Lie_{[r_1,r_2]}\<\mu,v\> \\
& \hspace{-40pt} - \<\mu, [D_2,D_1](v)\>\\
&  = \<[D_2,D_1]^\top(\mu), v\>,
\end{align*}
where we have used Proposition \ref{prop:graded_lie_str} in the last equality. This concludes the proof.


\end{proof}

\medskip \subsubsection{Linear vector-valued forms}\label{sec:lin_vec_val}
It is well-known \cite[\S~3.4]{Mac-book} that derivations on a vector bundle $E$ correspond bijectively to linear vector fields on $E$ (i.e. vector fields whose flow is by vector bundle automorphisms). In this subsection, we shall first recall how generalized derivations of degree $k$ on $E$  are in 1-1 correspondence with linear vector-valued forms on $E$, $\Omega_{lin}(E,TE)$, following \cite{BD}. It is important to point out that the Fr\"olicher-Nijenhuis bracket on $\Omega(E, TE)$ endows $\Omega_{lin}(E, TE)$ with a graded Lie algebra structure and $\GDer(E) \leftrightarrow \Omega_{lin}(E,TE)$ is a graded Lie algebra isomorphism.  

A vector-valued form $K \in \Omega^k(E, TE)$ on the total space $E$ is said to be \textit{linear} if it is $h_{\lambda}$-related to itself
, for every $\lambda \in \R_{>0}$. Here $h_\lambda: E \to E$ is the fiberwise multiplication by $\lambda$. We shall denote the space of linear vector-valued forms on $E$ by $\Omega_{lin}(E,TE)$.


To explain the relationship between linear vector-valued forms and generalized derivations, it is necessary to introduce a map $\mathcal{V}: \Omega^j(M, E) \to \Omega^j(E, TE)$ given by:
\begin{equation}\label{dfn:V_lift}
\mathcal{V}(\alpha \otimes u) = q^*\alpha \otimes u^\uparrow, \,\, \alpha \in \Omega^j(M), \,u \in \Gamma(E),
\end{equation}
where $u^\uparrow \in \frakx(E)$ is the vertical lift\footnote{The vertical lift of $u \in \Gamma(E)$ is the vector field given by
$$
u^\uparrow(e) = \frac{d}{d\epsilon}(e+ \epsilon u(x)), \,\,\,e \in E_x, \, x \in M.
$$
}. From \cite[Thm.~3.19]{BD}), it is known that there is a 1-1 correspondence between $K \in \Omega^k_{lin}(E, TE)$ and generalized derivations $\D=(D,l,r) \in \GDer^k(E)$ given by:
\begin{equation}\label{dfn:D_l_r}
\mathcal{V}(D(u)) = \Lie_{u^\uparrow}K, \,\,\, \mathcal{V}(l(u)) = K(u^\uparrow, \cdot) , \,\,\,\,\,q^*\<\beta,r\> =  \<K, q^*\beta\>
\end{equation}
where $\beta \in \Omega^1(M)$. We shall refer to $(l,r)$ as the symbol of $K$ as well.

For a linear vector-valued form $K \in \Omega^k(E, TE)$ with corresponding generalized derivation $\D \in \GDer^k(E)$, we shall denote the linear vector-valued form on $E^*$ associated to the generalized derivation $\D^\top$ \eqref{eq:dual_D} by $K^\top$.

If $U \in \frakx(E)$ is the linear vector field corresponding to $\Delta$, then $U \mapsto U^\top$ is the well-known bijection between linear vector fields on $E$ and on $E^*$ (see \cite[\S~3.4]{Mac-book}).

\begin{rem}\em
In the case $k=0$, the correspondence \eqref{dfn:D_l_r} is equivalent to the well-known correspondence between derivations and linear vector fields (see e.g. \cite[\S~3.4]{Mac-book} for more details): given a linear vector field $U \in \frakx(E)$  the formula
 $$
 [U, u^\uparrow] = \Delta(u)^\uparrow
 $$
 defines a derivation $\Delta: \Gamma(E) \to \Gamma(E)$.
By comparing with \eqref{dfn:D_l_r}, it follows that the generalized derivation of degree $0$ corresponding to $U$ is exactly
$
D = -\Delta.
$
\end{rem}

In the following, we shall establish some useful properties of linear vector-valued $k$-forms on $E$.

\begin{prop}
 There exists a vector bundle $\mathrm{Lin}^k(E)$ over $M$ whose sections are the linear vector-valued $k$-forms on $E$. Moreover, $\mathrm{Lin}^k(E)$ fits into a short exact sequence of vector bundle over $M$:
 \begin{align}\label{eq:lin_ses}
 0 \longrightarrow \wedge^k T^*M \otimes \mathrm{End}(E) & \longrightarrow \mathrm{Lin}^k(E) \longrightarrow\\
 \nonumber & \hspace{-70pt}\longrightarrow (\wedge^{k-1}T^*M \otimes \mathrm{End}(E)) \oplus (\wedge^k T^*M \otimes TM) \longrightarrow 0.
 \end{align}
\end{prop}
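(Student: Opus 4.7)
The plan is to localize the definition of a generalized derivation to a single point and then show that the resulting spaces assemble into a smooth vector bundle using the connection-based splitting from the proof of Lemma~\ref{lem:ses}.

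For each $x \in M$, I would set $\mathrm{Lin}^k(E)_x$ to be the vector space of triples $\D_x=(D_x,l_x,r_x)$, where $r_x \in \wedge^k T_x^*M \otimes T_xM$, $l_x \in \wedge^{k-1}T_x^*M \otimes \mathrm{End}(E_x)$, and $D_x\colon \Gamma(E) \to \wedge^k T_x^*M \otimes E_x$ is an $\R$-linear map satisfying the pointwise Leibniz rule
$$
D_x(fu)=f(x)\,D_x(u) + d_xf \wedge l_x(u(x)) - \langle d_xf, r_x\rangle \otimes u(x), \quad f\in C^\infty(M),\ u\in\Gamma(E).
$$
The natural $C^\infty(M)$-module structure $(f\D)=(fD,fl,fr)$ on $\GDer^k(E)$ together with evaluation $\mathrm{ev}_x(\D)=(D(\cdot)(x), l(x), r(x))$ yields a map $\GDer^k(E)\to \mathrm{Lin}^k(E)_x$; this is clearly well-defined since the global Leibniz equation \eqref{eq:D_Leibniz} evaluated at $x$ reduces to the one above.

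To put a smooth vector bundle structure on $\bigsqcup_{x\in M}\mathrm{Lin}^k(E)_x$, I would fix a linear connection $\nabla$ on $E$; formula \eqref{def:D_nabla} then provides a $C^\infty(M)$-linear splitting of the short exact sequence of Lemma~\ref{lem:ses}, giving an isomorphism of $C^\infty(M)$-modules
$$
\GDer^k(E)\;\cong\;\Omega^k(M,\mathrm{End}(E))\oplus \Omega^{k-1}(M,\mathrm{End}(E)) \oplus \Omega^k(M,TM),\qquad \D\mapsto (D - D^\nabla, l, r).
$$
Applying the same splitting pointwise identifies $\mathrm{Lin}^k(E)_x$ with
$$
\wedge^k T_x^*M\otimes \mathrm{End}(E_x) \oplus \wedge^{k-1}T_x^*M\otimes \mathrm{End}(E_x)\oplus \wedge^k T_x^*M\otimes T_xM,
$$
so $\mathrm{Lin}^k(E)_x$ is finite-dimensional of the expected rank, $\mathrm{ev}_x$ is surjective, and the above $C^\infty(M)$-module isomorphism exhibits $\GDer^k(E)$ as the space of smooth sections of a locally trivial vector bundle, namely $\mathrm{Lin}^k(E)$. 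Independence from the choice of $\nabla$ is automatic: the difference of two connections is a smooth section of $T^*M\otimes \mathrm{End}(E)$, hence the two resulting trivializations differ by a smooth vector bundle automorphism.

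Finally, the sequence \eqref{eq:lin_ses} is obtained by reading Lemma~\ref{lem:ses} fiberwise: both the inclusion of tensorial derivations and the symbol map are $C^\infty(M)$-linear, so they are induced by morphisms of vector bundles, and the stated fiberwise sequence is just the one of Lemma~\ref{lem:ses} after quotienting by $I_x\cdot \GDer^k(E)$, where $I_x\subset C^\infty(M)$ is the maximal ideal at $x$. The main obstacle is a conceptual one: making the intrinsic definition of the fibers $\mathrm{Lin}^k(E)_x$ rigorous, since $D_x$ is a first-order object rather than a tensor. Once this is done, the connection-based splitting from Lemma~\ref{lem:ses} provides an explicit local model and all remaining claims follow formally.
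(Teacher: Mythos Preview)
Your argument is correct and close in spirit to the paper's, but the execution differs. The paper works in local coordinates $(x,\xi)$ on $E$, writes down the explicit local form \eqref{eq:local_form} of a linear vector-valued $k$-form, and reads off from this expression (together with Lemma~\ref{lem:ses}) that the $C^\infty(M)$-module is locally free of the right rank. You instead use the connection-based splitting \eqref{def:D_nabla} from the proof of Lemma~\ref{lem:ses} to produce a \emph{global} $C^\infty(M)$-linear isomorphism between $\GDer^k(E)$ and sections of an already-known vector bundle, bypassing coordinates entirely.

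Both routes rest on the same ingredients (the $C^\infty(M)$-module structure and Lemma~\ref{lem:ses}); yours is cleaner and makes the independence of choices manifest, while the paper's coordinate description has the side benefit of giving the explicit local formula \eqref{eq:local_form}, which is reused later (e.g.\ in Proposition~\ref{prop:lin_char}). Your intrinsic description of the fiber $\mathrm{Lin}^k(E)_x$ via a pointwise Leibniz rule is not strictly needed---once you have the $C^\infty(M)$-module isomorphism with sections of a known bundle, Serre--Swan gives you the vector bundle and identifies its fibers---so the ``conceptual obstacle'' you flag can simply be dropped.
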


\begin{proof}
There is a $C^\infty(M)$-module structure on the space of linear vector-valued forms on $E$ given by the scalar multiplication of sections with $f \circ q$, for $f \in C^\infty(M)$. 
Let $(x,\xi^1, \dots, \xi^n)$ be local coordinates on $E$ and $\{u_1, \dots, u_n\}$ be a frame for $E$ such that
$
\xi^i(u_j(x)) = \delta_j^i.
$
On these coordinates, it is straightforward to check (e.g. using \cite[Prop.~4.10]{BD}) that a linear vector-valued $k$-form $K$ is given by:
\begin{align}\label{eq:local_form}
\nonumber K(x,\xi^1, \dots, \xi^n) & = r^{I_k}_j(x) \, dx_{I_k} \otimes \frac{\partial}{\partial x_j} +\xi^a \cdot D_a^{I_k, b}(x) \,dx_{I_k} \otimes \frac{\partial}{\partial \xi^b}\\
& \hspace{-40pt}  +  l^{I_{k-1},\,b}_a(x) \,d\xi^a \wedge dx_{I_{k-1}} \otimes \frac{\partial}{\partial \xi^b},
\end{align}
where $I_j = \{i_1 < \dots < i_j\} \subset \{1, \dots, n\}$, for $j = k-1$ or $k$, and 
\begin{align*}
l^{I_{k-1},\,b}_a & = \left\<\varphi^b, l(u_a)\left(\frac{\partial}{\partial x_{I_{k-1}}}\right)\right\>, \,\,\,\, r^{I_k}_j(x) = \left\<dx_j, r\left(\frac{\partial}{\partial x_{I_k}}\right)\right\>,\\
D_a^{I_k,b} & = \left\<\varphi^b, D(u_a)\left(\frac{\partial}{\partial x_{I_k}}\right)\right\>,
\end{align*}
where $\{\varphi^1, \dots, \varphi^n\}$ is the frame for $E^*$ dual to $\{u_1, \dots, u_n\}$ and $(D,l,r)$ is the generalized derivation corresponding to $K$. It now follows from Lemma \ref{lem:ses} that the module is locally free and finitely generated, hence it is the space of sections of a vector bundle. The existence of the short exact sequence \eqref{eq:lin_ses} follows from the $C^{\infty}(M)$-linearity of the s.e.s of Lemma \ref{lem:ses}. 
\end{proof}

Note that $\mathrm{Lin}^0(E)$ is the Atiyah algebroid of $E$. Also, it is important to give an explicit description of the inclusion $\wedge^k T^*M \otimes \mathrm{End}(E) \hookrightarrow \mathrm{Lin}^k(E)$: for $\Phi \in \Omega^k(M, \mathrm{End}(E))$, the linear vector-valued $k$-form $\Phi^\uparrow \in \Omega^k(E, TE)$ is given as
\begin{equation}\label{eq:vertical_lin}
\Phi^\uparrow(U_1, \dots, U_k) = \left.\frac{d}{d\epsilon}\right|_{\epsilon=0}(e + \epsilon \,\Phi_{(Tq(U_1), \dots, Tq(U_1))}(e)).
\end{equation}
For $\mathrm{id}_E \in \Omega^0(M, \mathrm{End}(E))$, $\mathrm{id}_E^{\uparrow} = \mathcal{E} \in \mathfrak{X}(E)$ is the Euler vector field of $E$.


The following lemma will be a useful tool to study vector-valued forms on vector bundles.
\begin{lem}\label{lem:vec_on_lin}
 Let $K, L \in \Omega^k(E,TE)$ be vector-valued forms on $E$. If
 $$
 K(U_1,\dots, U_k) = L(U_1, \dots, U_k),
 $$
 for any $k$-tuple of linear vector fields $U_1, \dots, U_k \in \frakx(E)$, then $K=L$.
\end{lem}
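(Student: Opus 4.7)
The plan is to show that $N := K-L \in \Omega^k(E,TE)$ vanishes identically. Since $N$ is tensorial in its entries, its value $N_e \in \wedge^k T_e^*E \otimes T_eE$ at each point depends only on the tangent vectors $U_i(e)$, so the hypothesis is equivalent to
$$
N_e(U_1(e),\dots,U_k(e)) = 0 \quad\text{for every } e \in E \text{ and every linear } U_1,\dots,U_k \in \frakx(E).
$$
The problem thus reduces to the pointwise assertion $N_e = 0$.

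The key step is the following spanning property: at any point $e \in E$ \emph{not} lying on the zero section, the set $\{U(e) : U \in \frakx(E) \text{ is linear}\}$ spans $T_eE$. To verify it I would work in local coordinates $(x^i,\xi^a)$ associated to a local frame of $E$ over a trivializing open $W \subset M$. A direct computation using $q \circ h_\lambda = q$ and the scaling of $\partial_{\xi^a}$ under $h_\lambda$ shows that every linear vector field on $q^{-1}(W)$ has the local form
$$
U = a^i(x)\,\frac{\partial}{\partial x^i} + b^a_c(x)\,\xi^c\,\frac{\partial}{\partial \xi^a},
$$
in agreement with \eqref{eq:local_form} for $k=0$. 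At a point $e=(x_0,\xi_0)$ with $\xi_0 \neq 0$, the numbers $a^i(x_0)$ are free, and since some component $\xi_0^{c_0}$ is nonzero, the values $b^a_c(x_0)\xi_0^c$ can likewise be prescribed arbitrarily; so local linear vector fields attain every element of $T_eE$. To promote these to global linear vector fields without changing the value at $e$, multiply by $\chi \circ q$ for a bump function $\chi \in C^\infty(M)$ supported in $W$ with $\chi \equiv 1$ near $q(e)$; linearity is preserved because $q \circ h_\lambda = q$, and extension by zero gives a globally defined linear vector field.

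Combined with the tensoriality of $N$, the spanning property forces $N_e = 0$ at every $e$ off the zero section, so $N$ vanishes on a dense open subset of $E$ (the case $\mathrm{rk}(E) = 0$ being trivial); smoothness of $N$ then gives $N \equiv 0$. The one mild obstacle is that linear vector fields do \emph{not} span $T_eE$ at points of the zero section, where they produce only the horizontal part $T_{q(e)}M \subset T_eE$, so one cannot conclude purely pointwise. It is the density of the complement of the zero section together with the continuity of $N$ that closes this gap.
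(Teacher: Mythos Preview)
Your proof is correct and follows essentially the same approach as the paper's: reduce to showing $N=K-L$ vanishes, work in local coordinates, and observe that values of linear vector fields span $T_eE$ (away from the zero section, with a density argument to finish). The paper's proof is a one-line sketch of exactly this idea; your version supplies the details, including the careful handling of the zero section and the globalization via $\chi\circ q$.
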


\begin{proof}
 It suffices to proof that
$
 K(U_1, \dots, U_k) = 0 \Rightarrow K \equiv 0,
$
for any $k$-tuple $(U_1, \dots, U_k)$ of linear vector fields. This follows from writing $K$ in local coordinates and choosing appropriated linear vector to show that all its components vanish.
\end{proof}

\begin{prop}\label{prop:lin_char}
Let $K \in \Omega^k(E, TE)$. The following are all equivalent:
\begin{itemize}
 \item[(i)] $K$ is linear;
 \item[(ii)] $\Lie_{\mathcal{E}} K \equiv 0$, where $\mathcal{E} \in \mathfrak{X}(E)$ is the Euler vector field of $E$;
 \item[(iii)] For any $k$-tuple of linear vector fields $U_1, \dots, U_k$, the vector field $U=K(U_1, \dots, U_k)$ is itself linear.
\end{itemize}
In this case, the derivation $\Delta$ corresponding to $U$ is given by
\begin{equation}\label{eq:K_der}
\Delta(u) = \sum_{i=1}^k (-1)^{i+1} l_{(\sharp(U_1), \dots, \widehat{\sharp(U_i)}, \dots, \sharp(U_k))}(\Delta^i(u)) - D_{(\sharp(U_1), \dots, \sharp(U_k))}(u),
\end{equation}
where $\Delta_i$, $i=1,\dots, k$, are the derivations corresponding to $U_i$ and $\sharp(U_i)$ are their symbols.
\end{prop}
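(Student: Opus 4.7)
The plan is to run the chain $(ii) \Leftrightarrow (i) \Rightarrow (iii) \Rightarrow (ii)$ and then derive the formula \eqref{eq:K_der} separately from the characterization $[U, u^\uparrow] = \Delta(u)^\uparrow$ of the derivation corresponding to a linear vector field $U$. The equivalence $(i) \Leftrightarrow (ii)$ is immediate: the flow of the Euler vector field $\mathcal{E}$ is precisely the fiberwise dilation $h_{e^t}$, so $\Lie_{\mathcal{E}} K = 0$ is equivalent to $h_\lambda$-invariance of $K$ for all $\lambda \in \R_{>0}$, which is $(i)$. For $(i) \Rightarrow (iii)$, a linear vector field $U_i$ is by definition $h_\lambda$-related to itself; combining this with the analogous condition on $K$ and unpacking the definition of $h_\lambda$-relatedness gives $Th_\lambda \circ K(U_1, \ldots, U_k) = K(U_1, \ldots, U_k) \circ h_\lambda$, so this vector field is linear.

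For $(iii) \Rightarrow (ii)$ I would exploit that $\mathcal{E}$ is itself linear. Expanding
$$(\Lie_{\mathcal{E}} K)(U_1, \ldots, U_k) = \Lie_{\mathcal{E}}(K(U_1, \ldots, U_k)) - \sum_{i=1}^k K(U_1, \ldots, [\mathcal{E}, U_i], \ldots, U_k),$$
the $k=0$ case of $(i) \Leftrightarrow (ii)$ applied to each linear $U_i$ gives $[\mathcal{E}, U_i] = 0$, and $(iii)$ combined with the same case applied to $K(U_1, \ldots, U_k)$ makes the first term on the right vanish. Lemma \ref{lem:vec_on_lin} then forces $\Lie_{\mathcal{E}} K = 0$.

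For formula \eqref{eq:K_der}, set $U = K(U_1, \ldots, U_k)$ and compute $[U, u^\uparrow]$ via the Lie-derivative expansion
$$[K(U_1, \ldots, U_k), u^\uparrow] = -(\Lie_{u^\uparrow} K)(U_1, \ldots, U_k) + \sum_{i=1}^k K(U_1, \ldots, \Delta_i(u)^\uparrow, \ldots, U_k),$$
where I used $[u^\uparrow, U_i] = -\Delta_i(u)^\uparrow$. By the first identity in \eqref{dfn:D_l_r} together with $Tq \circ U_i = \sharp(U_i) \circ q$ (valid since the $U_i$ are linear) and the vertical-lift identity $(q^*f) \cdot v^\uparrow = (fv)^\uparrow$, the first term evaluates to $-(D_{(\sharp(U_1), \ldots, \sharp(U_k))}(u))^\uparrow$. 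For the sum, using the second identity in \eqref{dfn:D_l_r} and the alternating-form sign $(-1)^{i-1}$ from moving $\Delta_i(u)^\uparrow$ to the first slot of $K$, each summand becomes $(-1)^{i-1}(l_{(\sharp(U_1), \ldots, \widehat{\sharp(U_i)}, \ldots, \sharp(U_k))}(\Delta_i(u)))^\uparrow$. Reading off $\Delta(u)$ from $[U, u^\uparrow] = \Delta(u)^\uparrow$ yields \eqref{eq:K_der}.

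The main bookkeeping obstacle is the sign tracking in the last paragraph, in particular the alternation sign from displacing $\Delta_i(u)^\uparrow$ out of the $i$-th slot of $K$ and the interaction of the vertical-lift identity with the pullbacks induced by $Tq \circ U_i = \sharp(U_i) \circ q$. Once these are in place the argument is entirely definitional.
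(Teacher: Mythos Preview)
Your proof is correct and follows essentially the same route as the paper's: the computation $(\Lie_{\mathcal{E}} K)(U_1, \ldots, U_k) = [\mathcal{E}, K(U_1, \ldots, U_k)]$ for linear $U_i$ together with Lemma~\ref{lem:vec_on_lin}, and the derivation of \eqref{eq:K_der} from \eqref{dfn:D_l_r} via the expansion of $[K(U_1,\ldots,U_k), u^\uparrow]$, are exactly what the paper does. The one difference is in $(i) \Leftrightarrow (ii)$: you use the direct observation that the flow of $\mathcal{E}$ is $h_{e^t}$, whereas the paper passes through the local coordinate expression \eqref{eq:local_form} and characterizes fiberwise constant and linear functions by their behavior under $\Lie_{\mathcal{E}}$; your argument is shorter and coordinate-free, while the paper's has the side benefit of making the local form of a linear $K$ explicit.
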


\begin{proof}
It can be checked that $K \in \Omega^k(E,TE)$ satisfies $\Lie_\mathcal{E} K \equiv 0$ if and only if it is locally written in the form \eqref{eq:local_form} for some functions $l^{I_{k-1},\,b}_a(x), \, r^{I_k}_j(x)$ and $D_a^{I_k,b}(x)$. This is a consequence of the following facts for a function $f \in C^{\infty}(E)$: 1) $f$ is fiberwise constant if and only if $\Lie_{\mathcal{E}}f = 0$; 2) $f$ is fiberwise linear if and only if $\Lie_{\mathcal{E}}f = f$; 3) $f \equiv 0$ if and only if $\Lie_{\mathcal{E}}f = -n f$, for some positive integer $n$. So, the equivalence $(i) \Leftrightarrow (ii)$ follows from the fact that any $K$ written locally as \eqref{eq:local_form} is $h_\lambda$-related to itself. As for the equivalence $(ii) \Leftrightarrow (iii)$, one knows, from Lemma \ref{lem:vec_on_lin}, that $\Lie_{\mathcal{E}}K \equiv 0$ if and only if $(\Lie_{\mathcal{E}}K)(U_1, \dots, U_k) = 0$, for any $k$-tuple of linear vector fields $(U_1, \dots, U_k)$. But,
\begin{align*}
0&=(\Lie_{\mathcal{E}}K)(U_1, \dots, U_k) = [\mathcal{E}, K(U_1, \dots, U_k)] - \sum_{i=1}^k K(U_1, \dots, \cancel{[\mathcal{E},U_i]}^{\,\,=0}\hspace{-5pt}, \dots, U_k) \\
& = [\mathcal{E}, K(U_1, \dots, U_k)].
\end{align*}
The formula \eqref{eq:K_der} follows from \eqref{dfn:D_l_r} and
\begin{align*}
\Delta(u)^\uparrow & = [K(U_1,\dots, U_k), u^\uparrow]\\
& = -(\Lie_{u^\uparrow}K)(U_1,\dots, U_k) + \sum_{i=1}^k (-1)^{i+1}K([U_i, u^\uparrow], U_1, \dots, \widehat{U_i}, \dots, U_k).
\end{align*}

\end{proof}

Consider now the Fr\"olicher-Nijenhuis bracket $[\cdot, \cdot]$ on $\Omega(E,TE)$. The graded Jacobi identity together with Proposition \ref{prop:lin_char} implies that $[\cdot, \cdot]$ induces a graded Lie algebra structure on $\Omega_{lin}(E,TE)$. Indeed, given $K_1, K_2 \in \Omega_{lin}(E,TE)$,
$$
\Lie_{\mathcal{E}}[K_1, K_2] = [\mathcal{E}, [K_1,K_2]] = [[\mathcal{E}, K_1], K_2] - (-1)^{k_1}[K_1, [\mathcal{E},K_2]] = 0.
$$

For future reference, we state the following result from \cite[Prop.~5.4]{BD} as a Proposition. 

\begin{prop}\label{prop:graded_lie_corresp}
The correspondence between $\Omega_{lin}(E,TE)$ and $\GDer(E)$ given by \eqref{dfn:D_l_r} is a graded Lie algebra isomorphism.
\end{prop}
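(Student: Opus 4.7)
The plan is to verify that under the bijection $K \leftrightarrow \D=(D,l,r)$ of \eqref{dfn:D_l_r}, the Fr\"olicher-Nijenhuis bracket $[K_1,K_2]$ is carried to the triple $[\D_1,\D_2]$ described in Proposition \ref{prop:graded_lie_str}. The argument that $[K_1,K_2]$ is itself linear, from graded Jacobi with $\mathcal{E}$ together with Proposition \ref{prop:lin_char}, already appears in the text just above, so $[K_1,K_2]$ corresponds to some generalized derivation and the task reduces to identifying its three components with those of $[\D_1,\D_2]$.

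I would first handle the $r$-component. The third equation in \eqref{dfn:D_l_r} is precisely the statement that each $K_i$ is $(Tq,q)$-related to $r_i$. Naturality of the Fr\"olicher-Nijenhuis bracket under related morphisms then yields that $[K_1,K_2]$ is $(Tq,q)$-related to $[r_1,r_2]$, which unfolds to $\langle [K_1,K_2],q^*\beta\rangle = q^*\langle\beta,[r_1,r_2]\rangle$, giving the stated $r$-component.

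Next I would identify the $D$-component by applying the graded Jacobi identity for the Fr\"olicher-Nijenhuis bracket with the vertical lift $u^\uparrow\in\frakx(E)$ of degree zero:
$$
\Lie_{u^\uparrow}[K_1,K_2]=[\Lie_{u^\uparrow}K_1,K_2]+[K_1,\Lie_{u^\uparrow}K_2].
$$
The left-hand side equals $\mathcal{V}(D(u))$ for the unknown $D$-component, while each $\Lie_{u^\uparrow}K_i$ on the right equals $\mathcal{V}(D_i(u))$ by \eqref{dfn:D_l_r}. Expanding the Fr\"olicher-Nijenhuis bracket of $K_j$ with an element in the image of $\mathcal{V}$, and using the extension formula \eqref{dfn:D_ext}, identifies the right-hand side with $\mathcal{V}(D_2(D_1(u))-(-1)^{k_1k_2}D_1(D_2(u)))$, as required. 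For the $l$-component I would run the parallel computation with interior products, using $\mathcal{V}(l_i(u))=\iota_{u^\uparrow}K_i$ together with a mixed contraction identity for $\iota_{u^\uparrow}[K_1,K_2]$ coming from the combined Fr\"olicher-Nijenhuis/Nijenhuis-Richardson calculus; this should produce exactly $[D_2,l_1]-(-1)^{k_1k_2}[D_1,l_2]$. I expect this last step to be the main obstacle, since no single graded-Jacobi identity delivers it cleanly and the sign bookkeeping across the mixed Lie-derivative/contraction identities is delicate.

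As a safer alternative, avoiding the mixed identities altogether, I would fall back on Lemma \ref{lem:vec_on_lin} and Proposition \ref{prop:lin_char}: evaluate both $[K_1,K_2]$ and the linear vector-valued form attached to $[\D_1,\D_2]$ on any $(k_1+k_2)$-tuple of linear vector fields. Both outputs are themselves linear vector fields by Proposition \ref{prop:lin_char}, and are therefore determined by their associated derivations, which on the Fr\"olicher-Nijenhuis side can be read off from the explicit bracket formula and on the $[\D_1,\D_2]$ side from \eqref{eq:K_der}. Matching the two derivations then completes the identification without ever leaving the realm of linear vector fields, so that Lemma \ref{lem:vec_on_lin} closes out the equality of the two vector-valued forms on all of $E$.
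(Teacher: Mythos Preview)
The paper does not actually prove this proposition: the sentence immediately preceding it reads ``For future reference, we state the following result from \cite[Prop.~5.4]{BD} as a Proposition,'' and no proof environment follows. Your proposal therefore goes well beyond what the paper does, supplying a direct argument where the paper only cites.

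Your outline is sound. The $r$-component via naturality of the Fr\"olicher--Nijenhuis bracket under $q$-related forms is clean and correct. For the $D$-component, the graded Jacobi identity with $u^\uparrow$ indeed reduces the question to computing $[\mathcal{V}(\eta),K_j]$ for $\eta\in\Omega^\bullet(M,E)$; the claim that this equals (up to the expected sign) $\mathcal{V}(D_j(\eta))$ with $D_j$ extended as in \eqref{dfn:D_ext} is the substantive step you glide over. It is true, but it is a genuine computation: the case $\eta=u\in\Gamma(E)$ is literally \eqref{dfn:D_l_r}, and the general case follows by writing $\mathcal{V}(\alpha\otimes u)=q^*\alpha\otimes u^\uparrow$ and applying the product rule $[K,\beta\wedge L]=\Lie_K\beta\wedge L-(-1)^{k}d\beta\wedge i_KL+(-1)^{k}\beta\wedge[K,L]$ from \cite[\S 8]{nat}, together with $\Lie_K q^*\alpha$ and $i_K q^*\alpha$ being controlled by $r$ and $\widetilde{r}$. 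You are right that the $l$-component via the mixed Fr\"olicher--Nijenhuis/Nijenhuis--Richardson identities is the most delicate part; your fallback through Lemma \ref{lem:vec_on_lin} and formula \eqref{eq:K_der} is a legitimate and arguably cleaner route, and would finish the argument without those identities.
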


\subsection{Generalized 1-derivations on Lie algebroids.}\label{sec:der_on_algbds}
Let $(A, [\cdot, \cdot], \rho)$ be a Lie algebroid over $M$. In this paper, we will be mostly interested in generalized derivations $\D \in \GDer^1(A)$ of degree $1$. We say that $\D$ is compatible with the Lie algebroid structure if
\begin{align}
\tag{IM1}\label{IM1} D_X([a,b]) &= [a, D_X(b)] - [b, D_X(a)] + D_{[\rho(b), X]} (a) - D_{[\rho(a), X]}(b)\\
\tag{IM2}\label{IM2} l([a,b]) & = [a, l(b)] - D_{\rho(b)}(a)\\
\tag{IM3} \label{IM3}r([\rho(a), X]) & = [\rho(a),r(X)] - \rho(D_{X}(a))\\
\tag{IM4}\label{IM4} r \circ \rho & = \rho \circ l.
\end{align}
This set of equations is usually refered as \textit{IM-equations} and $\D$ is called an IM $(1,1)$-tensor (where IM stand for infinitesimally multiplicative). Under the correspondence between generalized derivations and linear vector-valued forms, the IM $(1,1)$-tensors correspond to Lie algebroid morphisms $K: TA \to TA$, where $TA \to TM$ is the tangent Lie algebroid of $A \to M$ (see \cite{BD}).

It is interesting to note that \eqref{IM3} can be rewritten as
\begin{equation}\label{IM3_alt}
 \tag{IM3'} D^{r,T}_X(\rho(a)) = \rho(D_X(a)),
\end{equation}
where $D^{r,T}$ is the generalized derivation corresponding to $r$ (see Example \ref{exam:tang_D}).

Also, one can re-interpret \eqref{IM2} as follows: define a bracket ($\R$-bilinear operation) on $\Gamma(A)$ by
\begin{equation}\label{dfn:pseudo_lie}
[a,b]_\D = [l(a),b] + D_{\rho(b)}(a)
\end{equation}
and note that \eqref{IM2} can be rewritten as
\begin{equation}\label{IM2_alt}
 \tag{IM2'} [a,b]_\D  = [l(a),b] + [a,l(b)] - l([a,b]).
\end{equation}

One can check that $[\cdot,\cdot]_\D$ defines a pseudo-Lie algebroid structure on $A$ with anchors $\rho_l = \rho \circ l$ and $\rho_r = r \circ \rho$ (see \cite[Dfn.~2.1]{GU2}). From \eqref{IM2_alt}, it is clear that \eqref{IM2} implies that $[\cdot, \cdot]_\D$ is skew-symmetric. 

\begin{lem}\label{lem:Lie_D}
The bracket $[\cdot, \cdot]_\D$ is skew-symmetric if and only if 
\begin{equation*}
 K^\top \circ \pi_A^\sharp = \pi_A^\sharp \circ (K^\top)^*.
\end{equation*}
where $\pi_A \in \frakx^2(A^*)$ is the linear Poisson structure on $A^*$ and $K^\top: T(A^*) \to T(A^*)$ is the linear endomorphism corresponding to $\D^\top \in \GDer^1(A^*)$. In this case, $[\cdot, \cdot]_\D$ is the pre-Lie algebroid structure corresponding to the linear bivector field $\pi_K \in \frakx^2(A^*)$ given by
$$
\pi_K^\sharp = K^\top \circ \pi_{A}^\sharp.
$$ 
\end{lem}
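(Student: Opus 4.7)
The plan is to use the standard correspondence \cite{GU2} between linear contravariant $2$-tensors on $A^*$ and anchored $\R$-bilinear Leibniz brackets on $\Gamma(A)$: a linear $(2,0)$-tensor $P$ on $A^*$ determines a bracket on $\Gamma(A)$ via $\ell_{\{a,b\}_P}=P(d\ell_a,d\ell_b)=\langle d\ell_b,P^\sharp(d\ell_a)\rangle$, with skew-symmetry on either side matching skew-symmetry on the other; in particular $\pi_A$ corresponds to the Lie bracket of $A$. Writing $\pi_K$ for the linear $(2,0)$-tensor with raising map $K^\top\circ\pi_A^\sharp$, the strategy is to identify $\{\cdot,\cdot\}_{\pi_K}$ with (an equivalent presentation of) $[\cdot,\cdot]_\D$; both claims of the lemma will then follow.

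Since $(\pi_A^\sharp)^*=-\pi_A^\sharp$, the map $\pi_K^\sharp:T^*A^*\to TA^*$ is skew --- equivalently, $\pi_K$ is a genuine bivector field --- if and only if $K^\top\pi_A^\sharp=\pi_A^\sharp(K^\top)^*$. Via the dictionary above, this is precisely the condition for $\{\cdot,\cdot\}_{\pi_K}$ to be skew-symmetric.

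To identify the bracket, I would compute $\langle d\ell_b,K^\top(X_{\ell_a})\rangle$. The Hamiltonian vector field $X_{\ell_a}=\pi_A^\sharp(d\ell_a)$ is a linear vector field on $A^*$ whose associated derivation on $A^*$ is $\Lie_a$ (the dual of $\ad_a\in\Der(A)$, as follows from $[X_{\ell_a},\mu^\uparrow]=(\Lie_a\mu)^\uparrow$, or equivalently from $\{\ell_a,\ell_b\}_{\pi_A}=\ell_{[a,b]}$), with symbol $\rho(a)$. Formula \eqref{eq:K_der} from Proposition \ref{prop:lin_char} then tells us that the derivation on $A^*$ associated with the linear vector field $K^\top(X_{\ell_a})$ is
$$\mu\longmapsto l^\top(\Lie_a\mu)-D^\top_{\rho(a)}(\mu),$$
with symbol $r(\rho(a))$. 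For any linear vector field $V$ on $A^*$, $V(\ell_b)$ is the linear function corresponding to the image of $b$ under the derivation on $A$ dual to the one associated with $V$; pairing this with an arbitrary $\mu\in\Gamma(A^*)$ and applying the duality formula \eqref{eqn:1_duality} yields, after cancellation of two $\rho(a)\langle\mu,l(b)\rangle$ terms,
$$\{a,b\}_{\pi_K}=[a,l(b)]-D_{\rho(a)}(b).$$

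Comparing with the definition \eqref{dfn:pseudo_lie} of $[\cdot,\cdot]_\D$, a short rearrangement gives $\{a,b\}_{\pi_K}+\{b,a\}_{\pi_K}=-\bigl([a,b]_\D+[b,a]_\D\bigr)$, so skew-symmetry of $[\cdot,\cdot]_\D$ is equivalent to that of $\{\cdot,\cdot\}_{\pi_K}$, hence to $K^\top\pi_A^\sharp=\pi_A^\sharp(K^\top)^*$; under either condition,
$$\{a,b\}_{\pi_K}=-\{b,a\}_{\pi_K}=[l(a),b]+D_{\rho(b)}(a)=[a,b]_\D,$$
so that $[\cdot,\cdot]_\D$ is the pre-Lie algebroid bracket of $\pi_K$. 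The only non-routine step is the computation in the third paragraph: applying \eqref{eq:K_der} to a Hamiltonian vector field and threading the outcome through \eqref{eqn:1_duality} is where sign errors would be easiest to make.
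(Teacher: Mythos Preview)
Your argument is correct and takes a genuinely different route from the paper's. The paper does not prove Lemma~\ref{lem:Lie_D} in place: it defers to \S\ref{sec:pn_grpd}, where the result is obtained as the special case of Lemma~\ref{lem:IM4} (about a Poisson groupoid $(\G,\pi)$ with multiplicative $K$) in which $\G=A^*$ is the groupoid with fiberwise addition and $\pi=\pi_A$; the proof of Lemma~\ref{lem:IM4} in turn passes through the auxiliary operators $\delta_\D,\delta_K$ and invokes \cite[Lem.~6.9]{BD}. By contrast, you work entirely within the linear-vector-field/derivation dictionary of \S\ref{sec:lin_vec_val}: you compute the derivation of $K^\top(X_{\ell_a})$ via \eqref{eq:K_der} and dualize it using \eqref{eqn:1_duality}, arriving directly at $\{a,b\}_{\pi_K}=[a,l(b)]-D_{\rho(a)}(b)$. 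This is more elementary and fully self-contained in Section~2, with no forward reference to the groupoid machinery and no external citation --- a real advantage for a lemma that is used (e.g.\ in Proposition~\ref{prop:LN_bialg}) before that machinery is available. The paper's approach, on the other hand, buys generality: Lemma~\ref{lem:IM4} handles arbitrary Poisson groupoids, and Lemma~\ref{lem:Lie_D} drops out as a corollary. One small remark on your computation: in the cancellation step, besides the two $\Lie_{\rho(a)}\langle\mu,l(b)\rangle$ terms you flag, there is also a pair of $\Lie_{r(\rho(a))}\langle\mu,b\rangle$ terms that cancel; the final formula is correct as stated.
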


We shall give a proof of Lemma \ref{lem:Lie_D} in a more general setting in \S \ref{sec:pn_grpd} (see Remark \ref{rem:lieD_from_grpd}).

Let us give some examples.
\begin{exam}\em
 Given a map $\theta: TM \to A$, define $D: \Gamma(TM) \to \Omega^1(M, A)$, $l: A \to A$ and $r: TM \to TM$ by
 $$
 D^{\theta}_X(a) = [a, \theta(X)] - \theta([\rho(a), X]), \, l = \theta \circ \rho, \, r = \rho \circ \theta.
 $$
 It is straightforward to check that $(D, l, r) \in \GDer^1(A)$ is a IM (1,1)-form. When $A=TM$ with $\rho=\mathrm{id}_{TM}$ and $[\cdot,\cdot]$ the Lie bracket of vector fields, this construction gives exactly the generalized derivation $D^{r,T}$ of Example \ref{exam:tang_D}. In particular, $\D^{r,T}$ is an IM (1,1)-tensor on $TM$ and
 \begin{equation}\label{eq:deform_bracket}
  [X,Y]_{\D^{r,T}} = [r(X),Y]+[X, r(Y)] - r([X,Y]).
 \end{equation}
 \end{exam}

\begin{exam}\em
Let $\nabla: \Gamma(TM) \times \Gamma(A) \to \Gamma(A)$ be a connection and consider $D_X(a) = \nabla_X a$, $l=\mathrm{id}_A$ and $r=0$. One can check that $\D$ is an IM $(1,1)$-tensor if and only if  $\rho \equiv 0$ and 
$
\nabla_X[a,b] = [\nabla_Xa, b] + [a, \nabla_Xb].
$
This is equivalent to $A$ being a Lie algebra bundle (see \cite[Thm.~6.4.5]{Mac-book}).
\end{exam}

\subsection{Poisson-Nijenhuis structures}
Let $\pi \in \frakx^2(M)$ be a Poisson structure on $M$ and $r: TM \to TM$ a vector-valued 1-form. We say that $\pi$ and $r$ are \textit{compatible} if $r \circ \pi^\sharp = \pi^\sharp \circ r^*$ and 
\begin{equation}\label{eq:YM_concomitant}
C_\pi^r(\alpha, \beta) := [\alpha,\beta]_{\pi_r} - ([r^*\alpha,\beta]_\pi + [\alpha,r^*\beta]_\pi - r^*([\alpha,\beta]_\pi)) = 0,
\end{equation}
where $[\cdot, \cdot]_B$ is the Lie bracket on $\Gamma(T^*M)$ associated to a bivector field $B \in \frakx^2(M)$ given by 
$$
[\alpha,\beta]_B = \Lie_{B^\sharp(\alpha)}\beta - i_{B^\sharp(\beta)}d\alpha
$$
and $\pi_r \in \frakx^2(M)$ is the bivector field defined by $\pi_r^\sharp = r \circ \pi^\sharp$.

\begin{dfn}\em
A \textit{Poisson-Nijenhuis structure} is a compatible pair $(\pi, r)$ such that the Nijenhuis torsion of $r$, $N_r$, vanishes.
\end{dfn}

For a compatible pair $(\pi, r)$, the vanishing of the Nijenhuis torsion of $r$ is equivalent to $\pi_r$ being a Poisson structure (i.e. $[\pi_r, \pi_r] = 0$). The expression $C_\pi^r$ is called the concomitant of $\pi$ and $r$. It is important to note that $\<C_\pi^r(\alpha, \beta), X\> = \<\beta, R_{\pi}^r(X,\alpha)\>$, where 
$$
R_\pi^r(\alpha, X) = \pi^\sharp(\Lie_Xr^*(\alpha) - \Lie_{r(X)}\alpha) - [\pi^\sharp(\alpha),r](X)
$$
This expression $R_\pi^r$ was the original tensor used to express the compatibility between $\pi$ and $r$ in \cite{MaMo}. We will called it the Magri-Morosi concomitant. Note that
\begin{equation}\label{eq:mm_gen_der}
 R_\pi^r(\alpha, X) = \pi^\sharp(D^{r,T^*}_X(\alpha)) - D_X^{r,T}(\pi^\sharp(\alpha)),
\end{equation}
so that $R_\pi^r = 0$ is equivalent to \eqref{IM3_alt} for $\D^{r,T^*}$ on the cotangent Lie algebroid $(T^*M, [\cdot,\cdot]_\pi, \pi^\sharp)$. Similarly, the bracket $[\cdot,\cdot]_{\D^{r,T^*}}$ is exactly $[\cdot, \cdot]_{\pi_r}$ and the vanishing of the concomitant $C_\pi^r$ is exactly \eqref{IM2_alt}. In fact, we have the following result (see \cite[Prop.~6.7]{BD}) connecting Poisson-Nijenhuis structures with $\D^{r, T^*}$.

\begin{prop}\label{prop:pn}\cite{BD}
A Poisson structure $\pi \in \frakx^2(M)$ and the vector-valued form $r: TM \to TM$ are compatible if and only if $\D^{r, T^*}$ is an IM $(1,1)$ tensor on the Lie algebroid $(T^*M, [\cdot,\cdot]_\pi, \pi^\sharp)$.
\end{prop}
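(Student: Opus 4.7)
The plan is to translate each of the four IM equations for $\D^{r,T^*}=(D^{r,T^*},r^*,r)$ on the cotangent Lie algebroid $(T^*M,[\cdot,\cdot]_\pi,\pi^\sharp)$ into a condition on $(\pi,r)$, and to show that together they are equivalent to compatibility.

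Three of the four equations have their translation essentially in place thanks to the discussion preceding the proposition. Equation \eqref{IM4} for $\D^{r,T^*}$ reads $r\circ\pi^\sharp=\pi^\sharp\circ r^*$, which is literally one of the compatibility conditions. By formula \eqref{eq:mm_gen_der}, the rewriting \eqref{IM3_alt} of \eqref{IM3} is the statement $R_\pi^r=0$. For \eqref{IM2}, I would first verify that, under \eqref{IM4}, the deformed bracket $[\cdot,\cdot]_{\D^{r,T^*}}$ defined by \eqref{dfn:pseudo_lie} coincides with the Koszul-type bracket $[\cdot,\cdot]_{\pi_r}$ of the bivector $\pi_r$; this is a short calculation using the Cartan-formula rewriting $D^{r,T^*}_X(\alpha)=i_Xd(r^*\alpha)-i_{r(X)}d\alpha$ of \eqref{D_ctg}. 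With this identification, \eqref{IM2_alt} becomes exactly $C_\pi^r=0$. Since $C_\pi^r$ and $R_\pi^r$ determine each other through $\<C_\pi^r(\alpha,\beta),X\>=\<\beta,R_\pi^r(X,\alpha)\>$, the vanishing of either is equivalent to the vanishing of the other. This already yields the backward implication of the proposition (\eqref{IM4} together with \eqref{IM2} already suffice for compatibility) and three of the four IM equations in the forward implication.

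The remaining task---verifying \eqref{IM1} for $\D^{r,T^*}$ under the assumption of compatibility---is the main obstacle and requires a direct computation. The plan is to expand $D^{r,T^*}_X([\alpha,\beta]_\pi)$ via the Koszul formula $[\alpha,\beta]_\pi=\Lie_{\pi^\sharp(\alpha)}\beta-i_{\pi^\sharp(\beta)}d\alpha$ and the Cartan-formula expression for $D^{r,T^*}$, and then to simplify using the already-established \eqref{IM2}--\eqref{IM4}, the graded commutation rules between $d,\Lie,i$, and the Jacobi identity for $\pi$ in the form $[\pi^\sharp(\alpha),\pi^\sharp(\beta)]=\pi^\sharp([\alpha,\beta]_\pi)$. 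To keep the bookkeeping manageable, I would use that both sides of \eqref{IM1} are $C^\infty(M)$-linear in $X$ and, as functions of $(\alpha,\beta)$, are determined by their values on exact forms (via the Leibniz rules for $D^{r,T^*}$ and $[\cdot,\cdot]_\pi$). So it suffices to test the equation on $\alpha=df$ and $\beta=dg$, where $[df,dg]_\pi=d\{f,g\}$ is exact and $D^{r,T^*}_X(df)=i_Xd(r^*df)$ simplifies considerably; the remaining terms should then collect into expressions that vanish by Jacobi and by \eqref{IM2}--\eqref{IM4}.
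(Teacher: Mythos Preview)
The paper does not supply its own proof of this proposition: it is cited from \cite{BD} (specifically \cite[Prop.~6.7]{BD}), with only a remark that the formula there for $D^{r,T^*}$ agrees with \eqref{D_ctg}. So there is no in-paper argument to compare against; the discussion preceding the proposition (identifying \eqref{IM4} with $r\circ\pi^\sharp=\pi^\sharp\circ r^*$, \eqref{IM3_alt} with $R_\pi^r=0$, and \eqref{IM2_alt} with $C_\pi^r=0$) is exactly the translation you use, and your observation that \eqref{IM4} is needed for the identification $[\cdot,\cdot]_{\D^{r,T^*}}=[\cdot,\cdot]_{\pi_r}$ is a point the paper's exposition glosses over.

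Your plan is sound. For the only nontrivial piece, \eqref{IM1}, the reduction to exact forms is legitimate: once \eqref{IM2}--\eqref{IM4} are known, the difference of the two sides of \eqref{IM1} is $C^\infty(M)$-linear in each of $X,\alpha,\beta$ (the Leibniz correction terms are absorbed by \eqref{IM2}--\eqref{IM4}), so checking it on $\alpha=df$, $\beta=dg$ suffices. In that case $[df,dg]_\pi=d\{f,g\}$ and $D^{r,T^*}_X(df)=i_Xd(r^*df)$, and the verification reduces to Cartan calculus plus $[\pi^\sharp(df),\pi^\sharp(dg)]=\pi^\sharp(d\{f,g\})$. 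An alternative route, closer in spirit to \cite{BD}, would be to avoid \eqref{IM1} altogether by invoking the equivalence between the IM equations and $r^{\rm ctg}:T(T^*M)\to T(T^*M)$ being a Lie algebroid morphism (cf.\ Corollary \ref{cor:pn_ctg_char}), but your direct argument is self-contained and correct.
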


\begin{rem}\em
In \cite{BD}, the operator $D^{r, T^*}$ has a different formula from \eqref{D_ctg}, but it is a straighforward calculation to show the formulas agree. 
\end{rem}

%
%

\section{Complete lifts of vector-valued forms}
In this section, we use the framework of \S 2 to revisit the classical theory of complete lifts for vector-valued forms. We shall prove that complete lifts to the tangent and cotangent bundle define linear vector valued forms dual to each other; their corresponding generalized derivations are $\D^{r, T}$ and $\D^{r, T^*}$, respectively, for $r \in \Omega^k(M, TM)$. It will become clear that generalized derivations provide a conceptually simple tool to study properties of such lifts (e.g. local formulas and special cases - such as almost complex structures).

\subsection{Definitions}
Let us briefly recall the definitions of tangent and cotangent lift (also known as complete lifts in the literature) for vector-valued forms. Our references here are \cite{GU, Mac-book, YI}.

\medskip \subsubsection{Tangent lift}
To define the complete lift to the tangent bundle of a vector-valued form, it will be necessary to  first recall the procedure of lifting vector fields and differential forms on $M$ to its tangent bundle $q_M: TM \to M$. 

\paragraph{\bf Vector fields.}
For a vector field $X \in \frakx(M)$, its complete lift to $TM$ (or, tangent lift) is the vector field $X^{\mathrm{tg}} \in \frakx(TM)$ whose flow is the derivative of the flow of $X$. We list below some alternative ways to characterize the tangent lift: 
\begin{itemize}
 \item The vector field $X^{\mathrm{tg}}$ is linear and the derivation $\Delta^X: \Gamma(TM) \to \Gamma(TM)$ corresponding to it is
 $
 \Delta^{X} (Y) = [X,Y]
 $
 (see \cite[Example~3.4.8]{Mac-book}).
 \item By seeing $X$ as a map $X: M \to TM$, its tangent lift $X^{\mathrm{tg}}: TM \to T(TM)$ satisfies
 $
 X^{\mathrm{tg}} = J \circ TX,
 $
 where $J: T(TM) \to T(TM)$ is the canonical involution of the double tangent bundle
 \begin{equation}\label{dfn:can_involution}
 J(\left.\frac{\partial^2}{\partial s \, \partial t}\right|_{s,t=0} m(s,t)) = \left.\frac{\partial^2}{\partial t \, \partial s}\right|_{s,t=0} m(s,t),
 \end{equation}
 for every map $m: (-\epsilon, \epsilon) \times (-\delta, \delta) \to M$ (see \cite[Thm.~9.6.6]{Mac-book}).
\end{itemize}

\paragraph{\bf Differential forms.}
For $\alpha \in \Omega^k(M)$, its tangent lift $\alpha^{\mathrm{tg}} \in \Omega^k(TM)$ is defined as follows: consider the map
$$
\mathfrak{F}_\alpha: \prod_{q_M}^k TM := \underbrace{TM \times_M \dots \times_M TM}_{k-\mathrm{times}} \to \R, \,\, \mathfrak{F}_\alpha(X_1, \dots, X_k) = \alpha(X_1, \dots, X_k)
$$
and define, for $U_1, \dots, U_k \in T_X(TM)$,
$$
\alpha^{\mathrm{tg}}(U_1, \dots, U_k) := T\mathfrak{F_\alpha}(J(U_1), \dots, J(U_k)),
$$
where $J$ is the canonical involution \eqref{dfn:can_involution}. Note that we are using the identification  
$
T(\prod_{q_M}^k TM) \cong \prod_{Tq_M}^k T (TM)
$
and, also, that
$
Tq_M(J(U_1)) = \dots = Tq_M(J(U_k)) = X.
$

\begin{lem}\label{lem:form_lift}
 For $\alpha \in \Omega^k(M)$, one has that
 \begin{itemize}
 \item[(a)] $i_{X^\uparrow} \alpha^{\mathrm{tg}} = q_M^*(i_X\alpha)$;
 \item[(b)] $\alpha^{\mathrm{tg}}(X_1^{\mathrm{tg}}, \dots, X_k^{\mathrm{tg}}) = \ell_{df_\alpha}$;
 \end{itemize}
 where $X, X_1, \dots, X_k \in \frakx(M)$ are vector fields, $f_\alpha = \alpha(X_1,\dots, X_k) \in C^\infty(M)$ and $\ell_{df_\alpha} \in C^\infty(TM)$ is the fiberwise linear function corresponding to $df_\alpha$. 
\end{lem}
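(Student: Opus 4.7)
The plan is to extract both identities directly from the defining formula $\alpha^{\mathrm{tg}}(U_1,\dots,U_k) = T\mathfrak{F}_\alpha(J(U_1),\dots,J(U_k))$, exploiting the properties $J^2 = \mathrm{id}$, $Tq_M \circ J = q_{TM}$, and $X^{\mathrm{tg}} = J \circ TX$ of the canonical involution of the double tangent bundle.

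For part (a), fix $U_1 = X^\uparrow(Y)$ at $Y \in T_xM$ and arbitrary $U_2,\dots,U_k \in T_Y(TM)$. Since $X^\uparrow$ is vertical one has $Tq_M(X^\uparrow(Y)) = 0_x$ and $q_{TM}(X^\uparrow(Y)) = Y$, so $J(X^\uparrow(Y))$ is a tangent vector to $TM$ at $0_x$ whose projection via $Tq_M$ equals $Y$. A direct computation in local coordinates on $T(TM)$ (where $J$ exchanges the $\dot{x}$-block and the $\delta x$-block) shows that $J(X^\uparrow(Y))$ is represented by the curve $\epsilon \mapsto \epsilon X(\gamma(\epsilon))$ in $TM$, for any $\gamma$ with $\gamma(0)=x$, $\gamma'(0)=Y$. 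Pick curves $v_i(\epsilon) \in TM$ with $v_i'(0) = J(U_i)$ for $i \geq 2$, all lying over the same base curve $\gamma$. Multilinearity of $\alpha$ then gives
\begin{equation*}
\mathfrak{F}_\alpha(\epsilon X(\gamma(\epsilon)), v_2(\epsilon),\dots,v_k(\epsilon)) = \epsilon \, \alpha_{\gamma(\epsilon)}(X(\gamma(\epsilon)), v_2(\epsilon),\dots,v_k(\epsilon)),
\end{equation*}
and differentiating at $\epsilon = 0$ yields $\alpha_x(X(x), Tq_M(U_2),\dots,Tq_M(U_k)) = q_M^*(i_X\alpha)|_Y(U_2,\dots,U_k)$.

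For part (b), the identity $X^{\mathrm{tg}} = J \circ TX$ combined with $J^2 = \mathrm{id}$ yields $J(X_i^{\mathrm{tg}}(Y)) = TX_i(Y)$ for each $i$. The map $(X_1,\dots,X_k)\colon M \to \prod_{q_M}^k TM$ has differential at $x = q_M(Y)$ given by $Y \mapsto (TX_1(Y),\dots,TX_k(Y))$, and by definition $\mathfrak{F}_\alpha \circ (X_1,\dots,X_k) = f_\alpha$. The chain rule then delivers
\begin{equation*}
\alpha^{\mathrm{tg}}(X_1^{\mathrm{tg}},\dots,X_k^{\mathrm{tg}})|_Y = T\mathfrak{F}_\alpha(TX_1(Y),\dots,TX_k(Y)) = df_\alpha|_x(Y) = \ell_{df_\alpha}(Y).
\end{equation*}

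The main delicate step is the explicit identification of $J(X^\uparrow(Y))$ in (a): one has to recognise that the canonical involution sends this purely vertical vector at $Y$ to a tangent vector based at $0_x$ realised by the curve $\epsilon \mapsto \epsilon X(\gamma(\epsilon))$. Once that observation is in place, everything else reduces to multilinearity of $\alpha$ (which produces the overall factor of $\epsilon$ that is differentiated away) together with the involutive property $J^2 = \mathrm{id}$.
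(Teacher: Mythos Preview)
Your proof is correct and follows essentially the same strategy as the paper. Part (b) is verbatim the paper's argument. For part (a), the paper identifies $J(X^\uparrow(Y))$ via the additive decomposition $J(X^\uparrow(Y)) = T0(Y) + \left.\frac{d}{d\epsilon}\right|_{\epsilon=0}\epsilon X(x)$ with respect to the secondary vector bundle structure $T(TM) \to TM$ and then uses linearity of $T\mathfrak{F}_\alpha$ in that fibre direction, whereas you package the same information into the single curve $\epsilon \mapsto \epsilon X(\gamma(\epsilon))$ and extract the factor of $\epsilon$ via multilinearity of $\alpha$; these are two presentations of the same computation.
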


\begin{proof}
 For $Y \in T_xM$, note that $J(X^\uparrow(Y)) = T0(Y) + \left.\frac{d}{d\epsilon}\right|_{\epsilon=0}(\epsilon X(y))$, where $T0(Y)$ is the derivative of the zero section $0:M \to TM$ evaluated on $Y$. Hence, for $U_1, \dots, U_k \in T_Y(TM)$, one can write
 \begin{align*}
 (J(X^\uparrow(Y)), J(U_1), \dots, J(U_{k-1})) & = (T0(Y), J(U_1), \dots, J(U_k))\\
 & \hspace{-50pt}+ (\left.\frac{d}{d\epsilon}\right|_{\epsilon=0}(\epsilon X(y)), 0_{Tq_M(U_1)}, \dots, 0_{Tq_M(U_{k-1})}),
 \end{align*}
 where the sum is relative to the tangent bundle $\prod_{Tq_M}^kT(TM) \to \prod_{q_M}^k TM$. By definition,
 \begin{align*}
  \alpha^{\mathrm{tg}}(X^\uparrow(Y), U_1, \dots, U_{k-1}) &  = T\mathfrak{F}_\alpha(J(X^\uparrow(Y)), J(U_1), \dots, J(U_{k-1}))\\
  & = \bcancel{T\mathfrak{F}_\alpha(T0(Y), J(U_1), \dots, J(U_k))}_{\,\,\,=0}\\
  & \hspace{-50pt} + T\mathfrak{F}_\alpha(\left.\frac{d}{d\epsilon}\right|_{\epsilon=0}(\epsilon X(y)), 0_{Tq_M(U_1)}, \dots, 0_{Tq_M(U_{k-1})})\\
  & = \left.\frac{d}{d\epsilon}\right|_{\epsilon=0}\alpha(\epsilon X(y), Tq_M(U_1), \dots, Tq_M(U_{k-1}))\\
  & = \alpha(X(y), Tq_M(U_1), \dots, Tq_M(U_{k-1})).
   \end{align*}
   This proves (a). As for (b), note that $J(X^{\mathrm{tg}}(Y)) = TX(Y)$, since $J^2 = \mathrm{id}$. Hence,
   $$
   \alpha^{\mathrm{tg}}(X_1^{\mathrm{tg}}(Y), \dots, X_k^{\mathrm{tg}}(Y))  = T\mathfrak{F}_\alpha(TX_1(Y), \dots, TX_k(Y))
     = Tf_\alpha(Y)
     = \ell_{df_\alpha}(Y).
   $$
\end{proof}

\begin{rem}\em
 Our definition of the tangent lift for differential forms agrees with the ones in the literature \cite{BCO, GU, YI}. Indeed, it suffices to compare their local descriptions (see e.g. \cite[Eq.~2.1]{GU}). By considering local coordinates $(x, \dot{x})$ in $TM$ and writing $\alpha(x) = \alpha^{I_k}(x) \,dx_{I_k}$, it follows from Lemma \ref{lem:form_lift}, by considering the tangent and vertical lift of $\partial/\partial x_i$, that
 $$
 \alpha^{\mathrm{tg}}(x,\dot{x}) = \frac{\partial \alpha^{I_k}}{\partial x_j}(x)\,\dot{x}_j \, dx_{I_k} + \sum_{j=1}^k \alpha^{I_k}(x)\, dx_{i_1} \wedge \dots \wedge d\dot{x}_{i_j} \wedge \dots \wedge dx_{i_k}.
 $$
\end{rem}

It will be necessary to recall two properties of $\alpha^{\rm tg}$ which will be needed later:
\begin{align}
 (d\alpha)^{\rm tg} & = d(\alpha^{\rm tg})\\
\label{eq:euler_form}  \Lie_{\mathcal{E}}\alpha^{\rm tg} & = \alpha^{\rm tg}.
\end{align}
Both properties are implied by the local formula of $\alpha^{\rm tg}$. 


\paragraph{\bf Vector-valued forms.} We follow \cite{YI} to define the complete lift to the tangent bundle $r^\mathrm{tg} \in \Omega^k(TM, T(TM))$ of vector-valued forms $r \in \Omega^k(M, TM)$. First, for $r=\alpha \in \Omega^k(M)$ and $X \in \frakx(M)$, we define
\begin{equation}\label{eq:vec_val_lift}
(\alpha\otimes X)^{\mathrm{tg}} = q_M^*\alpha \otimes X^{\mathrm{tg}} + \alpha^{\mathrm{tg}} \otimes X^\uparrow
\end{equation}
and extend to general vector-valued forms by linearity.

\begin{rem}\em
To check that \eqref{eq:vec_val_lift} is well-defined, one uses the following properties of tangent lifts of vector-fields and forms: for $f \in C^{\infty}(M)$,
\begin{align*}
(fX)^{\mathrm{tg}} = (q_M^*f) X^{\mathrm{tg}} + \ell_{df} X^\uparrow; \,\,\, (f \alpha)^{\mathrm{tg}} = (q_M^*f) \alpha^{\mathrm{tg}} + \ell_{df}\, q_M^*\alpha,
\end{align*}
where $\ell_{df} \in C^\infty(TM)$ is the fiberwise linear function corresponding to $df \in \Gamma(T^*M)$ (see \cite[Eq.~(39) in \S 9.6]{Mac-book} and \cite[Lemma~3.1(ii)]{BCO}, respectively). From this, one has that
\begin{align*}
 (f \alpha \otimes X)^{\mathrm{tg}} & = ((f\alpha) \otimes X)^{\mathrm{tg}}  = (\alpha \otimes (fX))^{\mathrm{tg}}\\
 & = (q_M^*f)(\alpha \otimes X)^{\mathrm{tg}} + \ell_{df} (q_M^*\alpha \otimes X^\uparrow),
 \end{align*}
 for $f \in C^\infty(M)$.
\end{rem}

\medskip \subsubsection{Cotangent lift}
We follow \cite{GU, GU3} to define cotangent lifts for vector-valued forms. It will depend on a general construction for manifolds $N$ endowed with a bivector field $\pi \in \Gamma(\wedge^2 TN)$. Let $R_\pi: \Omega^k(N) \to \Omega^{k-1}(N, TN)$ be the map given as follows:
for $\alpha \in \Omega^k(N)$, $k \geq 1$,  consider $\widetilde{\alpha}:\wedge^{k-1} TN \to T^*N$ determined by 
$$
(X_1, \dots, X_{k-1}) \mapsto \alpha(\cdot\,,\, X_1, \dots, X_{k-1}),
$$
and define 
$$
R_\pi|_{C^\infty(M)} =0, \,\, 
R_\pi(\alpha) = \pi^\sharp \circ \widetilde{\alpha},
$$
where $\pi^\sharp: T^*M \to TM$ is the contraction map. Note 
\begin{itemize}
\item[(i)] $R_\pi|_{\Omega^1(N)} = \pi^\sharp$
\item[(ii)] $
R_\pi(\alpha \wedge \beta) = R_{\pi}(\alpha)\wedge \beta + (-1)^{k} \alpha  \wedge R_\pi(\beta),
$
for $\beta \in \Omega(N)$.
\item[(iii)]  For $\mu_1, \dots, \mu_k \in \Omega^1(N)$,
\begin{equation}\label{eq:R_on_product}
R_\pi(\mu_1 \wedge \dots \wedge \mu_k) = \sum_{i=1}^k (-1)^{i+1} \mu_1 \wedge \dots \wedge \widehat{\mu_i} \dots \wedge \mu_k \otimes \pi^\sharp(\mu_i).
\end{equation}
\end{itemize}
We refer to \cite[\S~3.1]{Mic} for further details.

We shall now focus on $N=T^*M$ with the Poisson structure $\pi_{\rm can}: = \omega_{\rm can}^{-1}$, where $\omega_{\rm can}$ is the canonical symplectic structure. Recall that, for $X \in \frakx(M)$, $\alpha \in \Omega^1(M)$,
$$
\pi_{\rm can}^\sharp(d\ell_X) = X^{\rm ctg}, \,\, \pi^\sharp_{\rm can}(p_M^*\alpha) = - \alpha^{\uparrow},
$$
where $p_M: T^*M \to M$ is the cotangent bundle projection, $\ell_X \in C^{\infty}(T^*M)$ is the fiberwise linear map corresponding to $X$ and $X^{\rm ctg}$ is the cotangent lift of $X$,  the linear vector field whose corresponding derivation is $\Lie_X: \Gamma(T^*M) \to \Gamma(T^*M)$, the Lie derivative along $X$.

Now, for $r \in \Omega^k(M, TM)$, its cotangent lift $r^{\mathrm ctg} \in \Omega^k(T^*M, T(T^*M))$ is defined by
\begin{align*}
(\alpha \otimes X)^{\rm ctg} & = R_{\pi_{\rm can}}(d(\ell_X  \,p_M^*\alpha))\\
& = p_M^*\alpha \otimes X^{\rm ctg} - d\ell_X \wedge R_{\pi_{\rm can}}(p^*_M \alpha) + \ell_X R_{\pi_{\rm can}}(p_M^*d\alpha)
\end{align*}
in the case $r = \alpha \otimes X$. Note that it is well-defined and extends to the general case by $\R$-linearity.

It will be important to note that, for $\beta \in \Omega(M)$, 
\begin{equation}\label{eq:R_pull_back}
R_{\pi_{\rm can}}(p_M^*\beta) = - \mathcal{V}(\widetilde{\beta}),
\end{equation}
for the map $\mathcal{V}$ defined by \eqref{dfn:V_lift}. This can be directly checked using \eqref{eq:R_on_product} on local coordinates. So, we have the following expression for the cotangent lift of $r = \alpha \otimes X$:
\begin{equation}\label{eq:ctg_lift}
 r^{\rm ctg } = p_M^*\alpha \otimes X^{\rm ctg} + d\ell_X \wedge \mathcal{V}(\widetilde{\alpha}) - \ell_X \,\mathcal{V}(\widetilde{d\alpha})
\end{equation}

%

\subsection{Presentation as linear vector-valued forms}
We can now state our main result regarding tangent and cotangent lifts.
\begin{thm}\label{thm:lifts}
Given $r \in \Omega^k(M, TM)$, its tangent lift $r^{\rm tg}$ (resp. cotangent lift $r^{\rm ctg}$) is the linear vector-valued form on $TM$ (resp. $T^*M$) associated to $D^{r,T}$ (resp. $D^{r, T^*}$). In particular, $r^{\rm tg} = (r^{\rm ctg})^\top$.
\end{thm}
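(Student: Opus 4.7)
The plan is to verify the three symbol identities of \eqref{dfn:D_l_r} that characterize the correspondence between linear vector-valued forms and generalized derivations, separately for the tangent and cotangent lifts. The identity $r^{\rm tg}=(r^{\rm ctg})^\top$ then falls out for free from Theorem \ref{thm:duality} and the involutivity of dualization. By the $\R$-linearity of all constructions involved, I would reduce throughout to the decomposable case $r=\alpha\otimes X$, with $\alpha\in\Omega^k(M)$ and $X\in\frakx(M)$, then invoke \eqref{eq:vec_val_lift} and \eqref{eq:ctg_lift}.

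For the tangent lift, I would first confirm linearity by checking $\Lie_\mathcal{E}r^{\rm tg}=0$ using $\Lie_\mathcal{E} q_M^*\alpha=0$, $\Lie_\mathcal{E} X^{\rm tg}=0$, $\Lie_\mathcal{E}\alpha^{\rm tg}=\alpha^{\rm tg}$ (equation \eqref{eq:euler_form}) and $\Lie_\mathcal{E} X^\uparrow=-X^\uparrow$, so the two summands of \eqref{eq:vec_val_lift} are each $\mathcal{E}$-invariant. The $r$-symbol identity $q_M^*\langle\beta,r\rangle=\langle r^{\rm tg},q_M^*\beta\rangle$ is immediate since $X^{\rm tg}$ is $q_M$-projectable to $X$ and the vertical summand contributes nothing. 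The $l$-symbol identity $\mathcal{V}(\widetilde r(Y))=r^{\rm tg}(Y^\uparrow,\cdot)$ reduces, for $r=\alpha\otimes X$, to Lemma \ref{lem:form_lift}(a). The crucial relation $\mathcal{V}(D^{r,T}(Y))=\Lie_{Y^\uparrow}r^{\rm tg}$ I would handle by expanding the Lie derivative of the two summands using the standard identities $[Y^\uparrow,X^{\rm tg}]=[Y,X]^\uparrow$, $[Y^\uparrow,X^\uparrow]=0$, $\Lie_{Y^\uparrow}q_M^*\alpha=0$, and $\Lie_{Y^\uparrow}\alpha^{\rm tg}=q_M^*\Lie_Y\alpha$ (the last following from Cartan's formula together with two applications of Lemma \ref{lem:form_lift}(a), since $(d\alpha)^{\rm tg}=d\alpha^{\rm tg}$). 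The resulting expression matches $\mathcal{V}(\Lie_Y\alpha\otimes X+\alpha\otimes[Y,X])=\mathcal{V}([Y,\alpha\otimes X])=\mathcal{V}(D^{r,T}(Y))$ via the definition of the Fr\"olicher--Nijenhuis bracket.

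For the cotangent lift I would run the analogous argument using \eqref{eq:ctg_lift}, first verifying $\Lie_\mathcal{E^*}r^{\rm ctg}=0$ (where $\mathcal{E}^*$ is the Euler field of $T^*M$) via $\Lie_{\mathcal{E}^*}\ell_X=\ell_X$, $\Lie_{\mathcal{E}^*}d\ell_X=d\ell_X$ and the fact that $\mathcal{V}(\widetilde\alpha)$, $\mathcal{V}(\widetilde{d\alpha})$ are fiberwise vertical (whence $\mathcal{E}^*$-eigen with eigenvalue $-1$), while $p_M^*\alpha\otimes X^{\rm ctg}$ is $\mathcal{E}^*$-invariant. For the three symbol identities against $(D^{r,T^*},\widetilde r^{\,*},r)$, the $r$-symbol follows since $X^{\rm ctg}$ projects to $X$ and the vertical terms contract to zero with $p_M^*\beta$. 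The $l$-symbol and the $D$-symbol require the identities $i_{\mu^\uparrow}d\ell_X=p_M^*\langle\mu,X\rangle$, $\Lie_{\mu^\uparrow}\ell_X=p_M^*\langle\mu,X\rangle$, $[\mu^\uparrow,X^{\rm ctg}]=(\Lie_X\mu)^\uparrow$, $[\mu^\uparrow,\nu^\uparrow]=0$, and $\Lie_{\mu^\uparrow}\mathcal{V}(\eta)=0$ for horizontal $\eta$; assembling the pieces and comparing with the explicit formula \eqref{D_ctg} for $D^{r,T^*}$ yields the claim.

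The equality $r^{\rm tg}=(r^{\rm ctg})^\top$ is then immediate: by construction $(r^{\rm ctg})^\top$ is the linear vector-valued form on $(T^*M)^*\cong TM$ whose generalized derivation is $(D^{r,T^*})^\top$, and a direct check of the defining formula \eqref{eq:dual_D} shows that $\D\mapsto\D^\top$ is an involution, so $(D^{r,T^*})^\top=D^{r,T}$. By the 1--1 correspondence of Proposition \ref{prop:graded_lie_corresp}, $(r^{\rm ctg})^\top=r^{\rm tg}$. The main obstacle I anticipate is the third identity in the cotangent case: \eqref{eq:ctg_lift} has three summands, each interacting nontrivially with $\Lie_{\mu^\uparrow}$, and care is needed with signs and with how the map $\mathcal{V}$ interacts with $d\ell_X$ and $\ell_X$; organizing the computation by the three types of terms (and leveraging the tangent case via duality wherever possible) is what makes the argument manageable.
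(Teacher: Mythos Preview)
Your proposal is correct and follows essentially the same route as the paper: reduction to a decomposable $r=\alpha\otimes X$, verification of linearity via $\Lie_{\mathcal{E}}(\cdot)=0$, and then checking the three defining relations \eqref{dfn:D_l_r} using the same standard identities for vertical lifts, tangent/cotangent lifts, and the map $\mathcal{V}$. One small correction: in the cotangent computation you state $[\mu^\uparrow,X^{\rm ctg}]=(\Lie_X\mu)^\uparrow$, but since $X^{\rm ctg}$ is the linear vector field with derivation $\Lie_X$ one has $[X^{\rm ctg},\mu^\uparrow]=(\Lie_X\mu)^\uparrow$, hence $[\mu^\uparrow,X^{\rm ctg}]=-(\Lie_X\mu)^\uparrow$; this sign feeds into matching the paper's formula for $D^{r,T^*}$ and is worth tracking carefully.
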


\begin{proof}
Let us assume that $r = \alpha \otimes X$. The general case will follow by $\R$-linearity. In the following, $\mathcal{E}_T$ (resp. $\mathcal{E}_{T^*}$) will denote the Euler vector field on $TM$ (resp. $T^*M$). Also, we will use the following property of the Fr\"olicher-Nijenhuis bracket on an arbitrary manifold $N$: for $Y \in \mathfrak{X}(N)$, $\beta \in \Omega(N)$ and $K \in \Omega(N, TN)$
$$
[Y, \beta \wedge K] = \beta \wedge [Y,K] + (\Lie_Y\beta) \wedge K
$$
(see \cite[\S 8.7]{nat}). 
\paragraph{\bf Tangent lift.}
To prove that $r^{\rm tg}$ is linear, we show that $[\mathcal{E}_T, r^{\rm tg}]= 0$ and use Proposition \ref{prop:lin_char}. Now,
\begin{align*}
[\mathcal{E}_T, r^{\rm tg}] & = [\mathcal{E}_T, q_M^*\alpha\otimes X^{\rm tg} + \alpha^{\rm tg}\otimes X^\uparrow]\\
& \hspace{-20pt} = q_M^*\alpha \otimes \cancel{[\mathcal{E}_T, X^{\rm tg}]}^{=0} + \cancel{\Lie_{\mathcal{E}_T}q_M^*\alpha}^{=0} \otimes X^{\rm tg} + \alpha^{\rm tg} \otimes [\mathcal{E}_T, X^\uparrow] + \Lie_{\mathcal{E}_T}\alpha^{\rm tg} \otimes X^\uparrow\\
& \hspace{-20pt}= -\alpha^{\rm tg} \otimes X^{\uparrow} + \alpha^{\rm tg} \otimes X^\uparrow = 0.
\end{align*}
Here, we have used \eqref{eq:euler_form} and that $[\mathcal{E}_T, X^\uparrow]=-X^\uparrow$.  Let us now check that $\D^{r,T}$ is the generalized derivation corresponding to $r^{\rm tg}$. For this, we shall use \eqref{dfn:D_l_r}. First,
\begin{align*}
[Y^\uparrow, r^{\rm tg}] & = q_M^*\alpha \otimes [Y^\uparrow, X^{\rm tg}] + \cancel{\Lie_{Y^\uparrow}q_M^*\alpha}^{=0} \otimes X^{\rm tg} \\
& \hspace{-20pt} + \alpha^{\rm tg}\otimes \cancel{[Y^\uparrow, X^\uparrow]}^{=0} + \Lie_{Y^\uparrow}\alpha^{\rm tg} \otimes X^\uparrow\\
& = q_M^*\alpha \otimes [Y,X]^\uparrow + q_M^*(\Lie_X\alpha) \otimes X^\uparrow\\
& = \mathcal{V}([Y, r]) = \mathcal{V}(D^{r,T}(Y)).
\end{align*}
It is now a straightforward computation to check that the symbol of $r^{\rm tg}$ is $(\widetilde{r}, r)$. This concludes the tangent lift part of the proof.

\paragraph{\bf Cotangent lift.}
Similarly, let us prove that $r^{\rm ctg}$ is linear:
 \begin{align*}
  [\mathcal{E}_{T^*}, r^{\rm ctg}] & = q_M^*\alpha \otimes \cancel{[\mathcal{E}_{T^*}, X^{\rm ctg}]}^{=0} + \cancel{\Lie_{\mathcal{E}_{T^*}}q_M^*\alpha}^{=0} \otimes X^{\rm ctg}  + d\ell_X \wedge [\mathcal{E}_{T^*}, \mathcal{V}(\widetilde{\alpha})]\\ 
  & \hspace{-20pt} + \Lie_{\mathcal{E}_{T^*}}(d\ell_X) \wedge \mathcal{V}(\widetilde{\alpha}) - \ell_X [\mathcal{E}_{T^*}, \mathcal{V}(\widetilde{d\alpha})] - \Lie_{\mathcal{E}_{T^*}}(\ell_X) \mathcal{V}(\widetilde{d\alpha})\\
  &  = - d\ell_X \wedge \mathcal{V}(\widetilde{\alpha}) + d\ell_X \wedge \mathcal{V}(\widetilde{\alpha}) + \ell_X  \mathcal{V}(\widetilde{d\alpha}) - \ell_X \mathcal{V}(\widetilde{d\alpha})
  = 0,
 \end{align*}
where we have used that $\Lie_{\mathcal{E}_{T^*}} \ell_X = \ell_X$ and $[\mathcal{E}_{T^*}, \mathcal{V}(\gamma)]= -\gamma$, for $\gamma \in \Omega(M,T^*M)$. So, by Proposition \ref{prop:lin_char}, we conclude that $r^{\rm ctg}$ is a linear vector-valued form.

Let us prove that the generalized derivation associated to $r^{\rm ctg}$ is $D^{r, T^*}$. Let $\mu \in \Omega^1(M)$. From \eqref{eq:dual_D}, one has that
$$
\<D^{r,T^*}(\mu), Y\> = d\<\mu, X\> \wedge i_Y \alpha - \<\mu, X\> \,i_Y d\alpha - \<\Lie_X \mu, Y\>\, \alpha.
$$
Now, 
\begin{align*}
[\mu^\uparrow, r^{\rm ctg}] & = p_M^*\alpha \otimes [\mu^\uparrow, X^{\rm ctg}] + \cancel{\Lie_{\mu^\uparrow} p_M^*\alpha}^{=0} \otimes X^{\rm ctg} + d\ell_X \wedge \cancel{[\mu^\uparrow, \mathcal{V}(\widetilde{\alpha})] }^{=0} \\
& \hspace{-30pt} + \Lie_{\mu^\uparrow}d\ell_X  \wedge \mathcal{V}(\widetilde{\alpha}) - \ell_X \cancel{[\mu^\uparrow, \mathcal{V}(\widetilde{d\alpha})]}^{=0} - (\Lie_{\mu^\uparrow}\ell_X) \mathcal{V}(\widetilde{\alpha})\\
 & = - p_M^*\alpha \otimes (\Lie_X \mu)^\uparrow + p_M^* d\<\mu, X\> \wedge   \mathcal{V}(\widetilde{\alpha}) - p_M^*\<\mu, X\> \, \mathcal{V}(\widetilde{d\alpha})\\
 & = \mathcal{V}(\underbrace{\alpha \otimes \Lie_X \mu + d\<\mu, X\> \wedge \widetilde{\alpha} - \<\mu, X\> \widetilde{d\alpha}}_{\gamma \in \Omega^k(M, T^*M)}).
\end{align*}
It is now straightforward to check that $\<\gamma, Y\> = \<D^{r,T^*}(\mu), Y\>$, for any $Y \in \mathfrak{X}(M)$. So, $[\mu^\uparrow, r^{\rm ctg}] = \mathcal{V}(D^{r,T^*}(\mu))$ as we wanted to prove.  We leave to the reader to check that the symbol of $r^{\rm ctg}$ is $(\widetilde{r}^\top, r)$.

\end{proof}

We end this section with two corollaries. Both of them are related to Lie algebroid endomorphisms of tangent Lie algebroids $TA \to TM$. For the first, we consider $A=TM$ and, for the second, $A= T^*M$ with the Lie algebroid structure coming from a Poisson structure $\pi \in \frakx^2(M)$ on $M$.

\begin{cor}\label{cor:tangent_1_1}
Given a linear vector-valued 1-form $K \in \Omega^1(TM, T(TM))$ with symbol $(l,r)$, one has that $K: T(TM) \to T(TM)$ is a Lie algebroid endomorphism if and only if $K = r^{\rm tg}$. 
\end{cor}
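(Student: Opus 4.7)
The strategy is to reduce the statement to three identifications already established in the excerpt: the bijection from \S \ref{sec:lin_vec_val} between linear vector-valued $1$-forms on $TM$ and generalized derivations $\D = (D, l, r) \in \GDer^1(TM)$; the fact recalled at the start of \S \ref{sec:der_on_algbds} that, under this bijection, a linear vector-valued $1$-form on $TA$ is a Lie algebroid morphism $TA \to TA$ precisely when $\D$ is an IM $(1,1)$-tensor; and Theorem \ref{thm:lifts}, which identifies $r^{\rm tg}$ as the linear vector-valued $1$-form associated to $\D^{r,T} = (D^{r,T}, r, r)$ (noting that $\widetilde{r} = r$ when $k=1$).

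For the nontrivial direction, I would assume $K$ is a Lie algebroid endomorphism of the tangent Lie algebroid of $TM$, so that the associated $\D = (D, l, r)$ satisfies the IM equations for $A = TM$ with $\rho = \id_{TM}$ and the Lie bracket of vector fields. Then \eqref{IM4} immediately collapses to $l = r$, and \eqref{IM3_alt} reads $D^{r,T}_X(Y) = D_X(Y)$ for all $X, Y \in \frakx(M)$, i.e.\ $D = D^{r,T}$. Combining these gives $\D = \D^{r,T}$, and Theorem \ref{thm:lifts} then yields $K = r^{\rm tg}$.

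For the converse, if $K = r^{\rm tg}$ then Theorem \ref{thm:lifts} identifies its generalized derivation as $\D^{r,T}$. The construction in Example \ref{exam:tang_D}, specialized to $A = TM$ and $\theta = r$, already asserts that $\D^{r,T}$ is an IM $(1,1)$-tensor on the tangent Lie algebroid, so $K$ is a Lie algebroid endomorphism.

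I do not foresee a genuine obstacle: the argument is essentially bookkeeping once the three identifications above are invoked. The only mildly delicate point is that the forward direction a priori requires all four IM equations, not just \eqref{IM3} and \eqref{IM4}. However, with $l = r$ and $\rho = \id$, \eqref{IM2_alt} reduces to the content of \eqref{IM3} after relabeling, while \eqref{IM1} follows from the Jacobi identity applied to the explicit formula $D^{r,T}_X(Y) = [Y, r(X)] - r([X, Y])$; alternatively one simply reinvokes Example \ref{exam:tang_D}, which already asserts that $\D^{r,T}$ satisfies the full IM system.
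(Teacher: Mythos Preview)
Your proposal is correct and follows essentially the same route as the paper: reduce to the associated generalized derivation, use \eqref{IM4} with $\rho=\id_{TM}$ to force $l=r$ and \eqref{IM3_alt} to force $D=D^{r,T}$, then invoke Theorem~\ref{thm:lifts}. One small citation slip: the example with the map $\theta:TM\to A$ that you use for the converse is not Example~\ref{exam:tang_D} but the (unlabeled) first example in \S\ref{sec:der_on_algbds}; the content you are invoking is there, just under a different label.
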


\begin{proof}
Let $\D = (D, r, l) \in \GDer^1(TM)$. We claim that $\D$ is a IM (1,1) form on $TM$ if and only if $\D= \D^{r,T}$. The result will then follow from Theorem \ref{thm:lifts}. To prove the claim, note that, since $\rho = \mathrm{id}_{TM}$ in this case, equation \eqref{IM3_alt} is equivalent to $D = D^{r,T}$ and \eqref{IM4} gives that $l=r$.
\end{proof}

\begin{cor}\label{cor:pn_ctg_char}
Given a vector-valued 1-form $r \in \Omega^1(M)$, one has that $\pi$ and $r$ are compatible if and only if $r^{\rm ctg}: T(T^*M) \to T(T^*M)$ is a Lie algebroid morphism.
\end{cor}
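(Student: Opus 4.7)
The proof will be a short assembly of three ingredients that have already been put in place.

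First I would invoke Theorem \ref{thm:lifts} to identify the cotangent lift $r^{\rm ctg} \in \Omega^1(T^*M, T(T^*M))$ as the \emph{linear} vector-valued 1-form on the vector bundle $T^*M \to M$ whose associated generalized derivation is precisely $\D^{r, T^*} \in \GDer^1(T^*M)$. This converts the geometric statement about $r^{\rm ctg}$ into an algebraic statement about $\D^{r,T^*}$.

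Next I would use the correspondence recalled in \S\ref{sec:der_on_algbds}: when the underlying vector bundle carries a Lie algebroid structure $(A, [\cdot,\cdot], \rho)$, a linear vector-valued 1-form $K \in \Omega^1(E, TE)$ (here $E=A$) is a Lie algebroid morphism $TA \to TA$ for the tangent prolongation Lie algebroid if and only if its associated generalized derivation is an IM $(1,1)$-tensor, i.e.\ satisfies \eqref{IM1}--\eqref{IM4}. Applied to $A = T^*M$ with its cotangent Lie algebroid structure $([\cdot,\cdot]_\pi, \pi^\sharp)$ coming from the Poisson bivector $\pi$, this means that $r^{\rm ctg}:T(T^*M)\to T(T^*M)$ is a Lie algebroid morphism precisely when $\D^{r,T^*}$ is an IM $(1,1)$-tensor on $(T^*M,[\cdot,\cdot]_\pi,\pi^\sharp)$.

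Finally, the equivalence between the latter condition and the compatibility of $(\pi,r)$ is exactly the content of Proposition \ref{prop:pn}. Chaining the three equivalences together finishes the proof. There is no real obstacle here beyond verifying that Theorem \ref{thm:lifts} and Proposition \ref{prop:pn} are being applied with the correct vector bundle and Lie algebroid structure (the bundle $T^*M$ endowed with the cotangent algebroid bracket $[\cdot,\cdot]_\pi$), so the corollary is a direct consequence of results already established in the paper.
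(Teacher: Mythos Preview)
Your proof is correct and follows the same approach as the paper: combine Theorem~\ref{thm:lifts} (identifying $r^{\rm ctg}$ with the linear vector-valued form associated to $\D^{r,T^*}$) with Proposition~\ref{prop:pn}, using the correspondence from \S\ref{sec:der_on_algbds} between IM $(1,1)$-tensors and Lie algebroid morphisms $TA\to TA$. The paper's own proof simply states this is a straightforward consequence of those two results, so your version is just a more detailed unpacking of the same argument.
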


\begin{proof}
 This is a straighforward consequence of Proposition \ref{prop:pn} and Theorem \ref{thm:lifts}.
\end{proof}

\section{Lie-Nijenhuis bialgebroids}
\subsection{Preliminaries} \label{subsec:IM_tensors}
Let $\G \toto M$ be a Lie groupoid and consider the Lie groupoid structure on its tangent and cotangent bundles, $T\G \toto TM$ and $T^*\G \toto A^*$, respectively. Here $A \to M$ is the Lie algebroid of $\G$. We shall refer the reader to \cite[\S 7]{Mac-Xu} for details and references relative to tangent and cotangent groupoids.

Following \cite{BD} we shall say that a $(q,p)$-tensor field $\tau \in \Gamma(\wedge^p T^*\G \otimes \wedge^q T\G)$ is \textit{multiplicative} if the associated function on the Whitney sum $\bigoplus^p T\G \oplus \bigoplus^q T^*\G$ is a multiplicative function. This notion recovers all the existing definitions of multiplicative tensors in the literature (e.g. Poisson groupoids \cite{Mac-Xu, We88}, symplectic groupoids \cite{Kar,We87}, (1,p) tensors \cite{LMX}). 

Let us recall the infinitesimal description of multiplicative tensors. For this, consider the $\R$-linear map $\mathcal{T}: \Gamma(\wedge^p T^*M \otimes \wedge^q A) \to \Gamma(\wedge^p T^*\G \otimes \wedge^q T\G)$ given by
$$
\mathcal{T}(\alpha \otimes (a_1 \wedge \dots \wedge a_q)) = \t^*\alpha \otimes (\overrightarrow{a_1}\wedge \dots \wedge \overrightarrow{a_q}), \,\, \alpha \in \Omega^p(M), \,\, a_i \in \Gamma(A),
$$
where $\t: \G \to M$ is the target map of $\G$ and, for $a \in \Gamma(A)$,  $\overrightarrow{a} \in \frakx(\G)$ is the corresponding right-invariant vector field. In the case $\G$ is source-connected, a $(q,p)$-tensor field $\tau$ on $\G$ is multiplicative if and only if:
\begin{itemize}
 \item[(i)] $\tau(X_1,\dots, X_p, \mu_1, \dots, \mu_q) = 0$, for $X_i \in T_x M \subset TG, \, \mu_j \in A^*_x \subset T^*\G$, $x \in M$;
\item[(ii)] there exists $\mathfrak{D}: \Gamma(A) \to \Gamma(\wedge^p T^*M \otimes \wedge^q A)$, $\mathfrak{l}: A \to \Gamma(\wedge^{p-1} T^*M \otimes \wedge^{q} A)$ and $\mathfrak{r}: T^*M \to \Gamma(\wedge^p T^*M \otimes \wedge^{q-1} A)$ such that
\begin{equation}\label{dfn:Dlr_mult}
\Lie_{\overrightarrow{a}}\tau = \mathcal{T}(\mathfrak{D}(a)), \,i_{\overrightarrow{a}}\tau = \mathcal{T}(\mathfrak{l}(a)), \, \, i_{\t^*\alpha} \tau = \mathcal{T}(\mathfrak{r}(\alpha)).
\end{equation}
\end{itemize}
Moreover, if $\tau$ is multiplicative, then $\tau \equiv 0$ if and only if $D=0,\,l=0, \,r=0$. We refer to \cite{BD} for a proof (see Theorems 3.11 and 3.19 therein).

\begin{rem}\em
By considering vector bundles as Lie groupoids with the multiplication given by fiberwise addition and $q=1$, the map $\mathcal{T}$ coincides with $\mathcal{V}$ given by \eqref{dfn:V_lift} and formulas \eqref{dfn:Dlr_mult} recover \eqref{dfn:D_l_r}. 
\end{rem}

The triple $(\mathfrak{D}, \mathfrak{l}, \mathfrak{r})$ is called the IM $(q,p)$-tensor corresponding to $\tau$ and it satisfy the Leibniz equation
\begin{equation}\label{eq:IM_leibniz}
 \mathfrak{D}(fa) = f \mathfrak{D}(a) + df \wedge l(a) - r(df) \wedge a, \,\,\, f \in C^\infty(M), \, a \in \Gamma(A)
\end{equation}
plus a set of compatibility equations called the \textit{IM equations}. It will not be necessary for us to recall the general form of the IM equations, except for the cases $p=q=1$ (endomorphisms) and $q=2, \,p=0$ (bivector fields). 

\paragraph{\bf Multiplicative endomorphisms.} A vector-valued 1-form  $K \in \Omega^1(\G, T\G)$ is multiplicative if and only if, seen as map $K: T\G \to T\G$, it is a Lie groupoid morphism. Let $(\mathfrak{D}, \mathfrak{l}, \mathfrak{r})$ be the corresponding IM (1,1) tensor on the Lie algebroid $A$ and consider the triple $\D = (D, l, r)$ given by $D= \mathfrak{D}, \,l=\mathfrak{l},\, r = \mathfrak{r}^*$. The Leibniz equation \eqref{eq:IM_leibniz} implies that  $\D \in \GDer^1(A)$ and, in this case, the IM equations are exactly the set of equations \eqref{IM1} - \eqref{IM4} of \S \ref{sec:der_on_algbds}. Note that the Lie algebroid morphism $K_A: TA \to TA$ obtained from differentiating $K$ is the linear vector-valued 1-form associated to $\D \in \GDer^1(A)$.

It will be also important to recall that the Nijenhuis torsion of a multiplicative endomorphism $K$, $N_K \in \Omega^2(\G, T\G)$, is also a multiplicative vector-valued 2-form and its corresponding IM (1,2) tensor is $\frac{1}{2}[\D, \D] \in \GDer^2(A)$.


%
%

\paragraph{\bf Multiplicative bivector fields.}
For a bivector field $\pi \in \Gamma(\wedge^2 T\G)$, notice that the corresponding IM $(2,0)$-tensor has only two components: $\mathfrak{D}: \Gamma(A) \to \Gamma(\wedge^2 A)$ and  $\mathfrak{r}:T^*M \to A$. Define $\delta = \mathfrak{D}$ and $\rho_*=\mathfrak{r}^*$. The Leibniz equation \eqref{eq:IM_leibniz} implies that there is a pre-Lie algebroid structure on $A^*$ defined by
\begin{equation}\label{eqn:delta_bracket}
\<[\mu_1, \mu_2]_*, a\> = \Lie_{\rho_*(\mu_1)}\<\mu_2, a\> - \Lie_{\rho_*(\mu_2)}\<\mu_2, a\> - \delta(a)(\mu_1, \mu_2),
\end{equation}
for $\mu_1, \, \mu_2 \in \Gamma(A^*), \, a \in \Gamma(A)$. The bracket $[\cdot,\cdot]_*$ will satisfy the Jacobi equation if and only if $\pi$ is a Poisson structure (i.e. $[\pi, \pi] =0$) (see \cite{ILX}). The IM-equations in this case reduces to 
\begin{equation}\label{eq:Lie_bialg}
\delta([a,b]) = [\delta(a), b] + [a, \delta(b)].
\end{equation}
So, the IM $(2,0)$-tensor is equivalent to the Lie bialgebroid structure on $(A,A^*)$ corresponding to the Poisson groupoid $(\G, \pi)$. 

\begin{rem}\label{rem:bivec_Lie}\em
It is important to note that, similar to the correspondence between IM (1,1)-tensors and Lie algebroid endomorphisms of the tangent prolongation $TA \to TM$, there is a linear bivector field $\pi_{A^*} \in \frakx^2(A)$ corresponding to the pre-Lie algebroid structure $[\cdot, \cdot]_*$ coming from the IM (2,0)-tensor. Also, the IM-equation \eqref{eq:Lie_bialg} is equivalent to $\pi_{A^*}^\sharp: T^*A \to TA$ being a Lie algebroid morphism from the cotangent Lie algebroid to the tangent Lie algebroid of $A$. We refer the reader to \cite{Mac-Xu} for further details (see Theorem 6.2. therein).
\end{rem}

\subsection{Definition and main properties}
Let $(A, A^*)$ be a Lie bialgebroid.

\begin{dfn}\em
A \textit{Lie-Nijenhuis bialgebroid structure} on $(A, A^*)$ is a generalized derivation $\D \in \GDer^1(A)$ of degree $1$ such that both $\D$ and $\D^\top$ are IM (1,1)-tensors and $[\D, \D]=0$. 
\end{dfn}

Due to Theorem \ref{thm:duality}, if $\D$ defines a Lie-Nijenhuis structure on $(A, A^*)$, then $\D^\top$ also defines a Lie-Nijenhuis structure on $(A^*, A)$. 

A Lie-Nijenhuis bialgebroid structure on $(A, A^*)$ is also equivalent to a linear vector-valued 1-form $K: TA \to TA$ such that both $K$ and its dual $K^\top:T(A^*) \to T(A^*)$ are Lie algebroid morphisms and the Nijenhuis torsion of $K$ being zero. Note that this automatically implies that the Nijenhuis torsion of $K^\top$ also vanishes because dualization is a graded Lie algebra isomorphism.

\begin{exam}\em
Let $(A, A^*)$ be a Lie bialgebroid over a connected manifold $M$ and consider $\D_\nabla \in \GDer^1(A)$ associated to a connection $\nabla: \Gamma(TM) \times \Gamma(A) \to \Gamma(A)$. It is straightforward to check that $D_\nabla$ defines an IM $(1,1)$-tensor if and only if $A$ is a bundle of Lie algebras (IM4 implies that $\rho \equiv 0$ and IM1 implies that parallel transport is a Lie algebra isomorphism). Similarly with $\D_\nabla^\top = \D_{\nabla^\top} \in \GDer^1(A^*)$. As $[\D_\nabla, \D_\nabla]=0$ is equivalent to  $\nabla$ being flat, one has that $\D_\nabla$ defines a Lie-Nijenhuis structure on $A$ if and only if $A\oplus A^* \to M$ is a flat Lie bialgebra bundle in the following sense: given a point $x_0 \in M$, the monodromy action of fundamental group $\pi_1(M, x_0)$ on $A_{x_0} \oplus A^*_{x_0}$ is by Lie bialgebra isomorphisms and
$$
A\oplus A^* \cong \widetilde{M} \times_{\pi_1(M, x_0)} (A_{x_0} \oplus A_{x_0}^*)
$$
as Lie bialgebroids, where $\widetilde{M}$ is the universal cover of $M$. We note that these structures of flat Lie bialgebra bundles have appeared in \cite{AN} in their study of the Goldman-Turaev Lie bialgebra.
\end{exam}

In \S \ref{sec:hol}, we will show that holomorphic Lie bialgebroids provide an important class of Lie-Nijenhuis bialgebroids.

For a Poisson manifold $(M, \pi)$, it is known that the cotangent Lie algebroid $(T^*M, [\cdot,\cdot]_\pi, \pi^\sharp)$ and $TM$ with its tangent Lie algebroid structure define a Lie bialgebroid \cite{Mac-Xu}. The next result show that Lie-Nijenhuis structures on $(TM, T^*M)$ correspond bijectively to Poisson-Nijenhuis structures on $M$.

\begin{prop}\label{prop:equiv_PN_LN}
A generalized derivation $\D \in \GDer^1(TM)$ with symbol $(l,r)$ defines a Lie-Nijenhuis bialgebroid structures on $(TM, T^*M)$ if and only if $(\pi, r)$ is a Poisson-Nijenhuis structure and $\D = \D^{r,T}$.
\end{prop}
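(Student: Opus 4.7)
The plan is to split the Lie-Nijenhuis condition into its three defining pieces—$\D$ being an IM $(1,1)$-tensor on the tangent Lie algebroid $TM$, $\D^\top$ being an IM $(1,1)$-tensor on the cotangent (Poisson) Lie algebroid $(T^*M, [\cdot,\cdot]_\pi, \pi^\sharp)$, and $[\D,\D]=0$—and to dispatch each piece with one of the three structural results already at hand: Corollary \ref{cor:tangent_1_1}, Proposition \ref{prop:pn}, and the graded Lie algebra monomorphism $r \mapsto \D^{r,T}$ from Example \ref{exam:tang_D}.

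For the ``only if'' direction, assume $\D$ with symbol $(l,r)$ defines a Lie-Nijenhuis structure on $(TM, T^*M)$. The dictionary recalled in \S\ref{subsec:IM_tensors} between IM $(1,1)$-tensors on a Lie algebroid $A$ and Lie algebroid endomorphisms of $TA$, applied to $A=TM$ and combined with Corollary \ref{cor:tangent_1_1}, forces the associated linear vector-valued 1-form on $TM$ to be $r^{\rm tg}$. Theorem \ref{thm:lifts} then identifies $\D$ with $\D^{r,T}$; in particular $l=r$, since the symbol of $\D^{r,T}$ in degree one is $(r,r)$ (Example \ref{exam:tang_D}). Consequently $\D^\top = \D^{r,T^*}$, and the hypothesis that $\D^\top$ is an IM $(1,1)$-tensor on the Poisson Lie algebroid is, by Proposition \ref{prop:pn}, precisely the compatibility of $(\pi,r)$. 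Finally, the graded Lie algebra embedding of Example \ref{exam:tang_D} turns $[\D^{r,T},\D^{r,T}]=0$ into $[r,r]_{FN}=0$, equivalently $N_r=0$. So $(\pi,r)$ is Poisson-Nijenhuis.

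For the ``if'' direction, I run the same chain in reverse. Given a PN pair $(\pi,r)$: Corollary \ref{cor:tangent_1_1} unconditionally yields that $\D^{r,T}$ is an IM $(1,1)$-tensor on the tangent Lie algebroid; Proposition \ref{prop:pn} converts compatibility of $(\pi,r)$ into the statement that $(\D^{r,T})^\top = \D^{r,T^*}$ is IM on $(T^*M, [\cdot,\cdot]_\pi, \pi^\sharp)$; and $N_r=0$ gives $[\D^{r,T},\D^{r,T}]=0$ through the monomorphism of Example \ref{exam:tang_D}. All three defining conditions hold, so $\D^{r,T}$ is a Lie-Nijenhuis structure.

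The only point requiring care—hardly an obstacle—is the bookkeeping at the symbol level: one must note that the symbol of $\D^{r,T}$ for $k=1$ collapses to $(r,r)$ (the $\widetilde{r}$ of Example \ref{exam:tang_D} reduces to $r$ when $k=1$), and Remark \ref{rem:symbol} is what guarantees that $D$ is uniquely determined once $(l,r)$ have both been pinned to $r$. Beyond this, the proposition is a direct concatenation of the three preceding results.
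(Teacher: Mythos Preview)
Your proof is correct and follows essentially the same three-step decomposition as the paper: Corollary \ref{cor:tangent_1_1} handles the IM condition on $TM$, Proposition \ref{prop:pn} the IM condition on $T^*M$, and the graded Lie algebra embedding $r \mapsto \D^{r,T}$ the Nijenhuis condition. One small correction to your closing paragraph: Remark \ref{rem:symbol} does \emph{not} say that $(l,r)$ determines $D$ (indeed it does not, as the short exact sequence of Lemma \ref{lem:ses} shows); what actually pins $D$ down is the IM equation \eqref{IM3_alt} with $\rho = \mathrm{id}_{TM}$, which forces $D = D^{r,T}$ directly---this is exactly the content of the proof of Corollary \ref{cor:tangent_1_1}, and your route through $r^{\rm tg}$ and Theorem \ref{thm:lifts} already delivers it without any appeal to symbol uniqueness.
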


\begin{proof}
From Corollary \ref{cor:tangent_1_1}, one obtains that $\D$ is an IM (1,1) tensor on $TM$ if and only if $\D = \D^{r,T}$. So, $\D^\top = D^{r, T^*}$ and Proposition \ref{prop:pn} implies that $\D^\top$ is an IM (1,1) tensor on $T^*M$ if and only if $\pi$ and $r$ are compatible. The vanishing of the Nijenhuis torsion of $r$ follows from  $[\D^{r,T}, \D^{r,T}] = D^{[r,r], T}$ (see Example \ref{exam:tang_D}).
\end{proof}

\begin{prop}\label{prop:LN_bialg}
 If $(A,A^*, \D)$ is a Lie-Nijenhuis bialgebroid and $\pi \in \frakx^2(M)$ is the associated Poisson structure defined by $\pi^\sharp = \rho_* \circ \rho^*$, then 
 \begin{itemize}
  \item[(i)] $(\pi, r)$ is a Poisson-Nijehuis structure on $M$;
  \item[(ii)] the bracket $[a,b]_l = [l(a), b] + [a,l(b)]-l([a,b])$ is a Lie algebroid bracket on $A$ for which $((A,[\cdot,\cdot]_l, \rho \circ l), (A^*, [\cdot,\cdot]_*, \rho_*)$ is a Lie bialgebroid.
  \item[(iii)] $((A,[\cdot,\cdot]_l, \rho \circ l), (A^*, [\cdot,\cdot]_*, \rho_*), \D)$ is a Lie-Nijenhuis bialgebroid.
 \end{itemize}
\end{prop}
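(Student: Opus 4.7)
The overall strategy is to read $(A,A^*,\D)$ through its duality: $\D^\top$ corresponds to a linear Lie algebroid morphism $K^\top:T(A^*)\to T(A^*)$ on the tangent Lie algebroid of $A^*$, interacting with the linear Poisson structure $\pi_A\in\frakx^2(A^*)$ of the bialgebroid. Note that $[\D^\top,\D^\top]=[\D,\D]^\top=0$ by Theorem \ref{thm:duality}, so both $K$ and $K^\top$ have vanishing Nijenhuis torsion.

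For (i), the compatibility $r\circ\pi^\sharp=\pi^\sharp\circ r^*$ is immediate from $\pi^\sharp=\rho_*\circ\rho^*$ together with \eqref{IM4} for $\D^\top$ (giving $r\rho_*=\rho_*l^*$) and the transpose of \eqref{IM4} for $\D$ (giving $l^*\rho^*=\rho^*r^*$). The vanishing $N_r=0$ is the $r$-component of $[\D,\D]=0$ via the monomorphism $r\mapsto D^{r,T}$ of Example \ref{exam:tang_D}. For the Magri--Morosi concomitant, one would expand $D^{r,T}_X(\pi^\sharp\alpha)=D^{r,T}_X(\rho_*\rho^*\alpha)$ using \eqref{IM3} for $\D^\top$ with $\mu=\rho^*\alpha$, and $\pi^\sharp D^{r,T^*}_X\alpha$ via \eqref{eq:dual_D}; the two expressions match after using \eqref{IM3} and \eqref{IM4} for $\D$, yielding $R^r_\pi=0$.

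For (ii), by \eqref{IM2_alt} the bracket $[\cdot,\cdot]_l$ coincides with $[\cdot,\cdot]_\D$ from \eqref{dfn:pseudo_lie}, and Lemma \ref{lem:Lie_D} identifies it with the linear bivector $\pi_K^\sharp=K^\top\circ\pi_A^\sharp$ on $A^*$. The Jacobi identity then reduces to $[\pi_K,\pi_K]=0$, which by standard Poisson--Nijenhuis theory holds once $(\pi_A,K^\top)$ is a PN pair on $A^*$: the compatibility is Lemma \ref{lem:Lie_D}, the Nijenhuis torsion vanishes as noted above, and the Magri--Morosi concomitant for $(\pi_A,K^\top)$ is handled by repeating the argument of (i) with $\D^\top$ playing the role of $\D$ on the Lie algebroid $A^*$. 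The Lie bialgebroid property of $((A,[\cdot,\cdot]_l,\rho\circ l),(A^*,[\cdot,\cdot]_*,\rho_*))$ then follows via Remark \ref{rem:bivec_Lie}, since $\pi_K^\sharp=K^\top\circ\pi_A^\sharp$ is a composition of two Lie algebroid morphisms: $\pi_A^\sharp:T^*A^*\to TA^*$ (from the original bialgebroid) and the endomorphism $K^\top:TA^*\to TA^*$ (from $\D^\top$ being IM).

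For (iii), the IM equations for $\D$ relative to the new structure $(A,[\cdot,\cdot]_l,\rho\circ l)$ can be verified directly from the original IM equations together with the identity $D_X\circ l = l\circ D_X$, which is the $l$-component of $[\D,\D]=0$ read off from Proposition \ref{prop:graded_lie_str}. For instance, the new \eqref{IM4} ($r\rho l = \rho l^2$) is immediate from the old one; the new \eqref{IM3} follows by applying the old \eqref{IM3} with $b=l(a)$ and then absorbing $\rho(D_X(l(a))) = \rho\circ l(D_X(a))$ into the new anchor; and \eqref{IM1}, \eqref{IM2} use in addition the $D^2=0$ component. Since $\D^\top$ is unchanged as a generalized derivation and $(A^*,[\cdot,\cdot]_*,\rho_*)$ is unchanged, $\D^\top$ remains IM there by hypothesis, and $[\D,\D]=0$ is preserved. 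The main technical obstacle throughout is the Magri--Morosi concomitant verification in (i) and (ii), where carefully tracking the IM equations and the factorisations $\pi^\sharp=\rho_*\rho^*$ and the analogous description of $\pi_A^\sharp$ is essential.
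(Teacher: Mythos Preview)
Your treatment of parts (i) and (iii) follows the paper's own argument closely and is correct: the factorisation $\pi^\sharp=\rho_*\circ\rho^*$ together with \eqref{IM3_alt} for $\D$ and for $\D^\top$ gives the vanishing of the Magri--Morosi concomitant on $M$, and in (iii) the IM equations for $\D$ relative to $[\cdot,\cdot]_l$ are checked exactly as you indicate, using $D_X\circ l=l\circ D_X$ and the $D^2$-component of $[\D,\D]=0$.

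The gap is in part (ii). You want the Jacobi identity for $[\cdot,\cdot]_l$ to follow from $(\pi_A,K^\top)$ being Poisson--Nijenhuis on $A^*$, and for the Magri--Morosi concomitant $R^{K^\top}_{\pi_A}$ you write ``repeating the argument of (i)''. But the argument in (i) hinged on the specific factorisation $\pi^\sharp=\rho_*\circ\rho^*$ of the \emph{base} Poisson through the two anchors, which is what allowed you to insert \eqref{IM3_alt} for $\D^\top$ and (the dual of) \eqref{IM3_alt} for $\D$. The linear Poisson $\pi_A$ on the total space $A^*$ admits no such factorisation through anchors of a bialgebroid over $A^*$, so the computation does not transfer. (The statement $R^{K^\top}_{\pi_A}=0$ is in fact equivalent to $K:TA\to TA$ being a Lie algebroid morphism --- this is Corollary~\ref{cor:das_result} --- but that is proved only after Theorem~\ref{thm:pn_grpd}, so invoking it here would be circular.)

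The paper avoids this entirely by working on $A$ rather than on $A^*$: it computes the Nijenhuis torsion of $l$ directly,
\[
N_l(a,b)=[l(a),l(b)]-l([a,b]_l)\overset{\eqref{IM2}}{=}l(D_{\rho(b)}(a))-D_{\rho(b)}(l(a)),
\]
and observes that the right-hand side is the $l$-component of $\tfrac12[\D,\D]=0$. Then $[\cdot,\cdot]_l$ is a Lie bracket by \cite[Cor.~1.1]{KM}, and the bialgebroid property follows, as you correctly note, from $\pi_K^\sharp=K^\top\circ\pi_A^\sharp$ being a composition of Lie algebroid morphisms. Replacing your PN argument on $A^*$ by this two-line computation of $N_l$ closes the gap.
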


\begin{proof}
The equality $r \circ \pi^\sharp = \pi^\sharp \circ r^*$ is a direct consequence of \eqref{IM4} for both $\D$ and $\D^\top$. Now, by dualizing \eqref{IM3_alt} for $(D, l, r)$, one obtains that $D^\top_X(\rho^*(\alpha)) = \rho^*(D^{r,T^*}_X(\alpha))$, for any $\alpha \in \Omega^1(M)$ and $X \in \frakx(M)$. Hence, by using \eqref{IM3_alt} for $\D^\top$, one has that \begin{align*}
\pi^\sharp(D^{r,T}_X(\alpha)) = \rho_*(D^\top_X(\rho^*(\alpha))) = D^{r,T}_X(\pi^\sharp(\alpha)),
\end{align*}
which, from \eqref{eq:mm_gen_der}, is exactly the vanishing of the Magri-Morosi concomitant. So, $\pi$ and $r$ are compatible. Finally, as the Nijenhuis torsion of $r$ is  the $TM$-component of $\frac{1}{2}[\D,\D]=0$, it follows that $(\pi, r)$ is a Poisson-Nijenhuis structure on $M$.  

As for (ii), notice that the Nijenhuis torsion $N_l$ of $l$ also vanishes. Indeed, by \eqref{IM2},
\begin{align*}
N_l(a,b)& := [l(a),l(b)]- l\left([l(a),b]+[a,l(b)]-l([a,b])\right)\\
& = l(D_{\rho(b)}(a)) - D_{\rho(b)}(l(a)) =0,
\end{align*}
where the last equality comes from the fact that $l(D_X(a))- D_X(l(a))$ is exactly the $A$-component of $\frac{1}{2}[\D,\D]=0$. So, $[\cdot,\cdot]_l$ is a Lie algebroid bracket \cite[Cor.~1.1]{KM}. By \eqref{IM2_alt} and Lemma \ref{lem:Lie_D}, one has that the linear Poisson structure on $A^*$ corresponding to $[\cdot,\cdot]_l$ is $\pi_K^\sharp = K^\top \circ \pi_{A}^\sharp$, where $\pi_A$ is the linear Poisson structure associated to $[\cdot,\cdot]$ on $A$ and $K^\top: T(A^*) \to T(A^*)$ is the linear endomorphism associated to $\D^\top$. As both maps are 
Lie algebroid morphisms, the result now follows from the characterization of Lie bialgebroids discussed in Remark \ref{rem:bivec_Lie}.

As for (iii), notice that one has only to check that $\D$ is an IM (1,1) tensor on $(A, [\cdot,\cdot]_l, \rho \circ l)$. We prove \eqref{IM1} and leave the other IM equations to the reader. First, using that $[\cdot, \cdot]_l = [\cdot,\cdot]_\D$ (because of \eqref{IM2_alt} for $\D$ on $(A, [\cdot,\cdot], \rho)$), one has that 
\begin{align*}
\mathcal{Q}&:=D_X([a,b]_l) - [D_X(a),b]_l - [a,D_X(b)]_l - D_{[\rho(l(b)),X]}(a) + D_{[\rho(l(a)),X]}(b) &  \\
&  = D_X([l(a),b]) - [D_X(l(a)),b] - [l(a),D_X(b)] + D_{[\rho(l(a)),X]}(b)\\
&  + D_X(D_\rho(b)(a)) - D_{\rho(b)}(D_X(a)) - D_{\rho(D_X(b))}(a) - D_{[r(\rho(b)), X]}(a)
\end{align*}
where we have used that $D_X(l(a)) = l(D_X(a))$ and $\rho \circ l = r \circ \rho$. By summing and subtracting $D_{[\rho(b), X]}(l(a))$ and using \eqref{IM1} and \eqref{IM3_alt} for $\D$ on $(A, [\cdot, \cdot], \rho)$, we obtain that 
\begin{align*}
\mathcal{Q} & =  D_X(D_{\rho(b)}(a)) - D_{\rho(b)}(D_X(a)) - D_{[r(\rho(b)), X]+[\rho(b), r(X)]- r([\rho(b),X])}(a)\\ 
& \hspace{-10pt}+ l(D_{[\rho(b), X]})(a) = D^2_{(\rho(b), X)}(a)
\end{align*}
where $D^2: \Gamma(A) \to \Gamma(\wedge^2 T^*M \otimes A)$ is the $D$-component of $\frac{1}{2}[\D, \D]$ (see e.g. \cite[Cor.~6.3]{BD}). The result now follows from $[\D, \D]=0$.
\end{proof}

\begin{rem}\em
 It is important to note that the definition of a Lie-Nijenhuis bialgebroid is symmetric with respect to $A$ and $A^*$. So, Proposition \ref{prop:LN_bialg} also implies that $((A, [\cdot, \cdot], \rho), (A^*, [\cdot,\cdot]_{l^*}, \rho_*\circ l^*), \D)$ is a Lie-Nijenhuis bialgebroid. Note that, in this way, we can generate a hierarchy of Lie-Nijenhuis bialgebroids
 $$
 ((A, [\cdot, \cdot], \rho), (A^*, [\cdot, \cdot]_{(l^*)^j}, \rho_*\circ (l^*)^j), \D)
 $$
\end{rem}

\begin{rem}\label{rem:courant}\em
For a Lie-bialgebroid $(A,A^*)$, there is a notion of Nijenhuis operator on its double $\mathbb{A}:=A\oplus A^*$. It is a vector bundle morphism $T: \mathbb{A} \to \mathbb{A}$ for which a Nijenhuis torsion constructed with the Courant bracket on $\mathbb{A}$ vanishes (see \cite{Kos2} and references therein). It follows from \cite[Thm.~4.5]{Kos2} and the proof of Proposition \ref{prop:LN_bialg}(ii) that if $\D = (D, r, l)$ is a Lie-Nijenhuis structure on $(A, A^*)$ such that $l^2= \lambda \mathrm{id}_A$, for some $\lambda \in \R$, then 
$$
T = 
\left(
\begin{matrix}
l & 0\\
0 & -l^*\\
\end{matrix}
\right)
$$
is a Nijenhuis operator on $\mathbb{A}$. 
\end{rem}

It is interesting to note that the formalism developed so far provides an alternative proof for a fundamental result of Y. Kosmann-Schwarzbach.

\begin{prop}\label{prop:alt_proof_kos}\cite{Kos}
Let $r \in \Omega^1(M, TM)$ and $\pi \in \frakx^2(M)$ be a Poisson manifold. Consider the cotangent Lie algebroid $T^*M_\pi=(T^*M,[\cdot,\cdot]_\pi, \pi^\sharp)$ and the pre-Lie algebroid $TM_r=(TM, [\cdot, \cdot]_r, r)$, where
$$
[X,Y]_r = [r(X), Y] + [X, r(Y)] - r([X,Y]),
$$
One has that $(\pi,r)$ is a Poisson-Nijenhuis structure if and only if $(TM_r, T^*M_\pi)$ is a Lie bialgebroid.
\end{prop}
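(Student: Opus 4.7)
The strategy is to reduce the claim to two earlier results: Proposition~\ref{prop:equiv_PN_LN}, which identifies PN structures $(\pi, r)$ on $M$ with Lie--Nijenhuis bialgebroid structures on $(TM, T^*M)$ of the form $\D^{r,T}$, and Proposition~\ref{prop:LN_bialg}(ii), which produces a new Lie bialgebroid out of any Lie--Nijenhuis one. The key preliminary observation is that for $r \in \Omega^1(M, TM)$, the symbol $(\widetilde{r}, r)$ of $\D^{r,T} \in \GDer^1(TM)$ has $\widetilde{r} = r$ viewed as an endomorphism of $TM$ (no antisymmetrization occurs in degree $k=1$), while the anchor of $TM$ is the identity. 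Hence the bracket $[\cdot,\cdot]_{\D^{r,T}}$ given by \eqref{dfn:pseudo_lie} coincides with $[\cdot,\cdot]_r$ and the induced anchor $\rho \circ l$ equals $r$. In particular, $TM_r$ is precisely the Lie algebroid $(TM, [\cdot,\cdot]_l, \rho \circ l)$ produced by Proposition~\ref{prop:LN_bialg}(ii) when applied to $(TM, T^*M, \D^{r,T})$.

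\textbf{Forward direction.} If $(\pi, r)$ is Poisson--Nijenhuis, then Proposition~\ref{prop:equiv_PN_LN} makes $(TM, T^*M, \D^{r,T})$ a Lie--Nijenhuis bialgebroid, and Proposition~\ref{prop:LN_bialg}(ii) immediately yields the desired Lie bialgebroid $(TM_r, T^*M_\pi)$.

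\textbf{Reverse direction.} Suppose $(TM_r, T^*M_\pi)$ is a Lie bialgebroid. Since $TM_r$ is a Lie algebroid, the Jacobi identity for $[\cdot,\cdot]_r$ is equivalent to $N_r = 0$ by the classical result of \cite{KM}; via Example~\ref{exam:tang_D} this is in turn equivalent to $[\D^{r,T}, \D^{r,T}] = D^{[r,r], T} = 0$. For the Magri--Morosi compatibility between $\pi$ and $r$, I would rephrase the bialgebroid condition via Remark~\ref{rem:bivec_Lie} and Lemma~\ref{lem:Lie_D}: the structure on $(TM_r, T^*M_\pi)$ is encoded by the linear bivector $\pi_{K_r}$ on $T^*M$, which by Lemma~\ref{lem:Lie_D} and Theorem~\ref{thm:lifts} satisfies $\pi_{K_r}^\sharp = r^{\rm ctg} \circ \pi_{\rm can}^\sharp$, and the bialgebroid condition amounts to $\pi_{K_r}^\sharp$ being a Lie algebroid morphism $T^*(T^*M_\pi) \to T(T^*M_\pi)$. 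Combined with the fact that $\pi_{\rm can}^\sharp$ is already such a morphism (from the canonical bialgebroid $(TM, T^*M_\pi)$ of \cite{Mac-Xu}), one extracts that $r^{\rm ctg}$ is a Lie algebroid endomorphism of $T(T^*M_\pi)$, which by Corollary~\ref{cor:pn_ctg_char} is exactly the compatibility of $\pi$ and $r$. Together with $N_r = 0$, $(\pi, r)$ is Poisson--Nijenhuis.

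\textbf{Main obstacle.} The delicate step in the reverse direction is the inference from ``$r^{\rm ctg} \circ \pi_{\rm can}^\sharp$ and $\pi_{\rm can}^\sharp$ are Lie algebroid morphisms'' to ``$r^{\rm ctg}$ is a Lie algebroid morphism'', since a composition being a morphism does not in general force the individual factors to be. The cleanest way to circumvent this is to translate the bialgebroid compatibility directly into the IM-equations \eqref{IM1}--\eqref{IM4} for $\D^{r, T^*}$ on $T^*M_\pi$, using the explicit duality formula \eqref{eqn:1_duality} together with the identity $[\cdot,\cdot]_{\D^{r,T}} = [\cdot,\cdot]_r$ from \eqref{eq:deform_bracket}; once these are verified, Proposition~\ref{prop:equiv_PN_LN} delivers that $(\pi, r)$ is PN, closing the argument.
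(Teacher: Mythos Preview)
Your argument is essentially the paper's own proof: the forward direction via Propositions~\ref{prop:equiv_PN_LN} and~\ref{prop:LN_bialg}, and the reverse direction via Remark~\ref{rem:bivec_Lie}, the factorization $\pi_{TM_r}^\sharp = r^{\rm ctg}\circ\pi_{\rm can}^\sharp$ (from Lemma~\ref{lem:Lie_D}, \eqref{eq:deform_bracket}, and Theorem~\ref{thm:lifts}), and finally Corollary~\ref{cor:pn_ctg_char}.

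The ``main obstacle'' you flag is not actually an obstacle, and the paper resolves it in one line. The point is that $\pi_{\rm can}$ is not just any linear Poisson structure on $T^*M$: it is the inverse of the canonical symplectic form $\omega_{\rm can}$, so $\pi_{\rm can}^\sharp: T^*(T^*M)\to T(T^*M)$ is a vector bundle \emph{isomorphism}. Since it is also a Lie algebroid morphism (from the bialgebroid $(TM, T^*M_\pi)$), its inverse $(\pi_{\rm can}^\sharp)^{-1}$ is a Lie algebroid isomorphism as well, and one simply writes
\[
r^{\rm ctg} = \pi_{TM_r}^\sharp \circ (\pi_{\rm can}^\sharp)^{-1}
\]
as a composition of Lie algebroid morphisms. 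There is no need to fall back on a direct verification of the IM equations for $\D^{r,T^*}$; you just overlooked that one of your two factors is invertible.
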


\begin{proof}
First note that $TM_r$ is a Lie algebroid if and only if the Nijenhuis torsion of $r$ vanishes (see \cite[Cor.~1.1]{Kos}). Now, on the one hand, if $(\pi, r)$ is Poisson-Nijenhuis, then Propositions \ref{prop:equiv_PN_LN} and \ref{prop:LN_bialg} gives the Lie bialgebroid structure on $(TM_r, T^*M_\pi)$. On the other hand, let us assume that $(TM_r, T^*M_\pi)$ is a Lie bialgebroid. As recalled in Remark \ref{rem:bivec_Lie}, this is equivalent to the linear Poisson 
$$
\pi_{TM_r}^\sharp: T^*(T^*M) \to T(T^*M)
$$ 
corresponding to $TM_r$ being a Lie algebroid morphism (the Lie algebroid structures here are the cotangent and tangent Lie algebroids associated to $T^*M_\pi$). Now, by combining \eqref{eq:deform_bracket} with Lemma \ref{lem:Lie_D} and Theorem \ref{thm:lifts}, we get that 
$$
\pi_{TM_r}^\sharp = r^{\rm ctg} \circ \pi_{\rm can}^\sharp,
$$ 
where $\pi_{\rm can} \in \frakx^2(T^*M)$ is the Poisson structure corresponding to the canonical symplectic form on $T^*M$ (see also  \cite[Lem.~7.1]{BP}). Here, we have used that $\pi_{\rm can}$ is the linear Poisson structure associated to the canonical Lie algebroid structure on the tangent bundle $TM$. The argument now proceeds as follows: the fact that $(TM, T^*M_\pi)$ is a Lie algebroid for any Poisson structure $\pi$ implies that $\pi_{\rm can}^\sharp: T^*(T^*M) \to T(T^*M)$ is a Lie algebroid morphism, so $r^{\rm ctg} = \pi_{TM_r}^\sharp \circ (\pi_{\rm can}^\sharp)^{-1}$ is also a Lie algebroid morphism. The result now follows from Corollary \ref{cor:pn_ctg_char}.
\end{proof}

\subsection{Lie theory of Poisson-Nijenhuis groupoids}\label{sec:pn_grpd}
We now proceed to study Poisson-Nijenhuis structures on Lie groupoids and their infinitesimal data. Our data will be $(\G,\pi, K)$, where $\G \toto M$ is a source-connected and source-simply connected groupoid endowed with a multiplicative Poisson structure $\pi \in \frakx^2(\G)$ and a multiplicative vector-valued 1-form $K \in \Omega^1(\G, T\G)$. Let $\D \in \GDer^1(A)$ be the corresponding IM (1,1)-tensor on the Lie algebroid $A \to M$ corresponding to $K$ and consider the Lie algebroid structure $([\cdot,\cdot]_*, \rho_*)$ on $A^*$ making $(A, A^*)$ the Lie bialgebroid corresponding to $\pi$.

\begin{thm}\label{thm:pn_grpd}
The Poisson structure $\pi$ and $K$ are compatible if and only if $\D^\top \in \GDer^1(A^*)$ is an IM (1,1) tensor on the Lie algebroid $A^*$. In particular, there is a 1-1 correspondence between multiplicative Poisson-Nijenhuis structures on $\G$ and Lie-Nijenhuis bialgebroid structures on $(A, A^*)$. In this case, the Lie bialgebroid corresponding to $\pi_K^\sharp = K\circ \pi^\sharp$ is $(A^*, \rho_* \circ l^*, [\cdot, \cdot]_{l^*})$, where
$$
[\mu_1, \mu_2]_{l^*} = [l^*(\mu_1), \mu_2]_* + [\mu_1,  l^*(\mu_2)]_* - l^*([\mu_1,\mu_2]_*).
$$
\end{thm}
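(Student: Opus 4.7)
The plan is to analyze the multiplicative bivector $\pi_K$ on $\G$ defined by $\pi_K^\sharp = K\circ\pi^\sharp$, compute its IM $(2,0)$-data in terms of $\D$ and the Lie algebroid structure $(A^*, [\cdot,\cdot]_*, \rho_*)$ coming from $\pi$, and identify the result with data built from $\D^\top$. Since $\pi$ and $K$ are multiplicative, so is $\pi_K$; using \eqref{dfn:Dlr_mult} together with the identities $i_{\t^*\alpha}\pi_K = K(i_{\t^*\alpha}\pi)$ and $\Lie_{\overrightarrow a}\pi_K = (\Lie_{\overrightarrow a}K)\circ\pi^\sharp + K\circ\Lie_{\overrightarrow a}\pi^\sharp$, one extracts a candidate pair $(\delta_K, \rho_{*,K})$ directly from $(D,l,r)$ and $(\delta, \rho_*)$.

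The skew-symmetry of $\pi_K$, equivalent to $K\circ\pi^\sharp = \pi^\sharp\circ K^*$, translates infinitesimally to \eqref{IM4} for $\D^\top$ on $A^*$, namely $r\circ\rho_* = \rho_*\circ l^*$; this is the groupoid generalization of Lemma \ref{lem:Lie_D}. Assuming skew-symmetry, applying the dualization formula \eqref{eqn:1_duality} shows that the induced pre-Lie algebroid on $A^*$ has anchor $\rho_*\circ l^*$ and bracket $[\cdot,\cdot]_{l^*}$ as in the statement, with the IM equation \eqref{eq:Lie_bialg} for $\pi_K$ becoming \eqref{IM1} for $\D^\top$. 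The remaining compatibility is then read off from $[\pi_K,\pi_K]=0$: under skew-symmetry this identity decomposes into the concomitant $C_\pi^K = 0$, corresponding via \eqref{IM2_alt} and \eqref{IM3_alt} to \eqref{IM2} and \eqref{IM3} for $\D^\top$, and the Nijenhuis torsion $N_K = 0$, whose IM $(1,2)$-data is $\tfrac12[\D,\D]$ as recalled in \S\ref{subsec:IM_tensors}, and which by Theorem \ref{thm:duality} equals $\tfrac12[\D^\top,\D^\top]$.

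Combining these steps yields the stated equivalences: $\pi$ and $K$ compatible iff $\D^\top$ is IM $(1,1)$ on $A^*$, and $(\pi,K)$ Poisson--Nijenhuis iff $(A,A^*,\D)$ is Lie--Nijenhuis; moreover the dual Lie algebroid of $(\G, \pi_K)$ is precisely $(A^*, \rho_*\circ l^*, [\cdot,\cdot]_{l^*})$. The main obstacle is the middle identification: one must carefully apply the Leibniz rules for $\Lie_{\overrightarrow a}$ against $K$ and $\pi^\sharp$, then recognize the outcome through \eqref{eqn:1_duality} and the deformed bracket \eqref{IM2_alt}. A convenient shortcut I would adopt is to pass to the linear tensors on $A$ via the Mackenzie--Xu correspondence of Remark \ref{rem:bivec_Lie}: $\pi$ corresponds to the linear Poisson $\pi_{A^*}$, $K$ to $K_A$, and $\pi_K$ to the linear bivector with anchor $K_A\circ\pi_{A^*}^\sharp$; in this form the required identifications reduce to essentially the same algebra as in the proof of Lemma \ref{lem:Lie_D}, with source-simple-connectedness of $\G$ used only to lift infinitesimal equivalences back to global ones.
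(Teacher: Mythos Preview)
Your plan has a genuine gap: the conditions you extract from $\pi_K$ do not line up with the IM equations for $\D^\top$.

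First, the bialgebroid compatibility \eqref{eq:Lie_bialg} for $\pi_K$ cannot ``become'' \eqref{IM1} for $\D^\top$. Once $\pi_K$ is a multiplicative bivector (i.e.\ once $K\circ\pi^\sharp=\pi^\sharp\circ K^*$), the equation \eqref{eq:Lie_bialg} holds \emph{automatically}; it is a consequence of multiplicativity, not an extra constraint, so it cannot encode the nontrivial condition \eqref{IM1} on $\D^\top$. Structurally the two equations are also of different type: \eqref{eq:Lie_bialg} is an identity in $\Gamma(\wedge^2 A)$ involving $a,b\in\Gamma(A)$, whereas \eqref{IM1} for $\D^\top$ lives in $\Gamma(A^*)$ and involves $X\in\frakx(M)$ and $\mu_1,\mu_2\in\Gamma(A^*)$.

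Second, and more seriously, $[\pi_K,\pi_K]=0$ does \emph{not} decompose into $C_\pi^K=0$ and $N_K=0$. Even assuming $K\circ\pi^\sharp=\pi^\sharp\circ K^*$, the vanishing of $[\pi_K,\pi_K]$ is strictly weaker than the conjunction of those two conditions (for a compatible pair one only gets $N_K$ restricted to $\mathrm{im}\,\pi^\sharp$, and without compatibility the concomitant and torsion contributions can cancel). So you cannot recover $C_\pi^K=0$---and hence \eqref{IM2}, \eqref{IM3} for $\D^\top$---from $[\pi_K,\pi_K]=0$. Your ``shortcut'' through linear tensors on $A$ just transports the same gap to the vector-bundle level.

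The paper avoids this by working directly with the Magri--Morosi concomitant as a $(2,1)$-tensor $\tau_\pi^K$ on $\G$. It shows (Lemmas \ref{lem:tau_lr}, \ref{lem:MM1}, \ref{lem:MM2} and Proposition \ref{prop:Lie_tau}) that $\tau_\pi^K$ is itself multiplicative and computes its IM data $(Q_D,Q_l,Q_r)$ explicitly: these are precisely the obstructions $\mathcal Q_D,\mathcal Q_l,\mathcal Q_r$ to \eqref{IM1}, \eqref{IM2}, \eqref{IM3} for $\D^\top$. Thus $C_\pi^K=0$ on a source-connected $\G$ is equivalent to those three IM equations, while Lemma \ref{lem:IM4} handles the skew-symmetry/\eqref{IM4} part. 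The point is that the correct multiplicative object to differentiate is the concomitant $\tau_\pi^K$, not the bivector $\pi_K$.
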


Our proof will be based on a detailed analysis of the Magri-Morosi concomitant on the Lie groupoid $\G$ which we present below in a serie of Lemmas and one Proposition. First of all, recall that $R_\pi^K$ will be a tensor if and only if $\pi^\sharp \circ K^* = K \circ \pi^\sharp$. The next result generalizing Lemma \ref{lem:Lie_D} give the infinitesimal conditions for this to happen.

\begin{lem}\label{lem:IM4}
 One has that $\pi^\sharp \circ K^* = K \circ \pi^\sharp$ if and only if the pseudo Lie-bracket (see \eqref{dfn:pseudo_lie})
 $$
 [\mu_1, \mu_2]_{\D^\top} = [l^*(\mu_1), \mu_2]_* + D^\top_{\rho(\mu_2)}(\mu_1) 
 $$
 is skew-symmetric. In particular, $\rho_* \circ l^* = r \circ \rho_*$ and $([\cdot,\cdot]_{\D^\top}, \rho_*\circ l^*)$ is the pre-Lie algebroid structure on $A^*$ corresponding to $K \circ \pi^\sharp$.
\end{lem}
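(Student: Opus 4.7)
The plan is to reduce the statement to Lemma \ref{lem:Lie_D} applied to $\D^\top \in \GDer^1(A^*)$, via the differentiation/integration correspondence for multiplicative structures on $\G$.

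Since $\G$ is source-simply connected, so is $T^*\G$, and Lie groupoid morphisms $T^*\G \to T\G$ are determined by their Lie functor image. All of $\pi^\sharp: T^*\G \to T\G$, $K: T\G \to T\G$, and $K^*: T^*\G \to T^*\G$ are Lie groupoid morphisms (the first by the Poisson groupoid condition, the other two by multiplicativity of $K$ together with the compatibility of fiberwise duality with the cotangent groupoid structure). Differentiating produces $K \leftrightarrow K_A: TA \to TA$, $\pi^\sharp \leftrightarrow \pi_{A^*}^\sharp : T^*A \to TA$ (with $\pi_{A^*}$ the linear Poisson from Remark \ref{rem:bivec_Lie}), and $K^* \leftrightarrow K_A^*$, so the groupoid identity $K \circ \pi^\sharp = \pi^\sharp \circ K^*$ is equivalent to its infinitesimal counterpart $K_A \circ \pi_{A^*}^\sharp = \pi_{A^*}^\sharp \circ K_A^*$ on $A$. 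Now I would apply Lemma \ref{lem:Lie_D} to $\D^\top$ viewed as a generalized derivation on the Lie algebroid $(A^*, [\cdot,\cdot]_*, \rho_*)$: the KKS linear Poisson on $(A^*)^* = A$ associated to $A^*$ is exactly $\pi_{A^*}$, while the linear endomorphism on $A$ corresponding to $(\D^\top)^\top = \D$ is $K_A$. Lemma \ref{lem:Lie_D} then states that the last infinitesimal identity is equivalent to skew-symmetry of $[\cdot,\cdot]_{\D^\top}$, which gives the main equivalence.

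For the remaining assertions: skew-symmetry of the pseudo-Lie bracket forces its two a priori anchors $\rho_* \circ l^*$ and $r \circ \rho_*$ to agree (apply the Leibniz rule to $[f\mu_1, \mu_2]_{\D^\top} = -[\mu_2, f\mu_1]_{\D^\top}$ and match coefficients of $df$), proving $\rho_* \circ l^* = r \circ \rho_*$. Moreover, the second half of Lemma \ref{lem:Lie_D} identifies $([\cdot, \cdot]_{\D^\top}, \rho_* \circ l^*)$ with the pre-Lie algebroid on $A^*$ corresponding to the linear bivector on $A$ with sharp $K_A \circ \pi_{A^*}^\sharp$; by the same differentiation correspondence used above, this linear bivector on $A$ is the Lie-functor image of the multiplicative bivector on $\G$ with sharp $K \circ \pi^\sharp$, so the pre-Lie algebroid $([\cdot, \cdot]_{\D^\top}, \rho_* \circ l^*)$ on $A^*$ is indeed the one associated with $K \circ \pi^\sharp$. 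The only technical subtlety is the bookkeeping in the differentiation step, namely the compatibility of the Lie functor with composition and fiberwise duality for Lie groupoid morphisms between tangent/cotangent prolongations of $\G$; once granted, the whole statement reduces to the vector bundle case already handled by Lemma \ref{lem:Lie_D}.
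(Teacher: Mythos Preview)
Your reduction is circular in this paper's logical order. You invoke Lemma~\ref{lem:Lie_D}, but the paper explicitly defers the proof of that lemma to \S\ref{sec:pn_grpd} and then derives it \emph{from} Lemma~\ref{lem:IM4} (see the sentence right after Lemma~\ref{lem:Lie_D} and Remark~\ref{rem:lieD_from_grpd}). So within the paper you cannot use Lemma~\ref{lem:Lie_D} as an input here. If you want to run your argument, you must first supply an independent proof of Lemma~\ref{lem:Lie_D} by direct computation on $A^*$; that is certainly doable, but it is precisely the content you are trying to avoid, and it amounts to the paper's own computation specialized to the groupoid $A^* \toto M$.

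There is also a wrinkle in your differentiation step. The claim ``$\G$ source 1-connected $\Rightarrow$ $T^*\G$ source 1-connected'' is neither obvious nor what you actually need. What you want is that the multiplicative $(2,0)$-tensor on $\G$ with sharp map $K\circ\pi^\sharp - \pi^\sharp\circ K^*$ (equivalently, the symmetric part of $K\circ\pi^\sharp$) vanishes iff its IM data vanish; this follows from source-connectedness of $\G$ and the general correspondence for multiplicative tensors recalled in \S\ref{subsec:IM_tensors}, not from any connectivity of $T^*\G$. The paper's proof bypasses all of this: it writes down $\delta_\D$ and $\delta_K$ directly, checks $\delta_\D(a)(\mu_1,\mu_2)=-\delta_K(a)(\mu_2,\mu_1)$, and then appeals to \cite[Lem.~6.9]{BD}, which already encodes the equivalence between skewness of $K\circ\pi^\sharp$ and skewness of $\delta_K$.
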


\begin{proof}
 Define $\delta_\D, \, \delta_K: \Gamma(A^*) \to \Gamma(A^* \otimes A^*)$ as follows:
 \begin{align*}
 \delta_\D(a)(\mu_1,\mu_2) & = \Lie_{\rho_*(l^*(\mu_1)}\<\mu_2, a\> - \Lie_{r(\rho_*(\mu_2)}\<\mu_1, a\> - \<[\mu_1,\mu_2]_{\D^\top}, a\>\\
 \delta_K(a)(\mu_1, \mu_2) & = \delta(a)(\mu_1, l^*(\mu_2)) - \<\mu_1, D_{\rho_*(\mu_2)}(a)\>,
 \end{align*}
 where $\delta: \Gamma(A) \to \Gamma(\wedge^2 A)$ is the Chevalley-Eilenberg differential of $A^*$. It is clear that $[\cdot, \cdot]_{\D^\top}$ is skew-symmetric if and only if $\rho_* \circ l^* = r \circ \rho_*$ and $\delta_D(a) \in \Gamma(\wedge^2 A)$. A straightforward calculation shows that
 $$
 \delta_\D(a)(\mu_1,\mu_2) = - \delta_K(a)(\mu_2, \mu_1),
 $$
The result now follows from \cite[Lem~6.9]{BD}.
\end{proof}

\begin{rem}\label{rem:lieD_from_grpd}\em
Given a Lie algebroid $A$, we can consider $A^*$ as a Lie groupoid with the multiplication given as fiberwise addition. In this case, the linear Poisson structure $\pi_A \in \frakx^2(A^*)$ is multiplicative and Lemma \ref{lem:Lie_D} follows directly from Lemma \ref{lem:IM4}. 
\end{rem}

From now on, we assume that $\pi^\sharp \circ K^* = K \circ \pi^\sharp$ on $\G$ and let us consider the $(2,1)$-tensor $\tau_\pi^K \in \Gamma(T^*\G \otimes \wedge^2 T\G)$ on $\G$ defined by
$$
\tau_\pi^K(U, \xi_1, \xi_2) = \<\xi_2, R^K_\pi(U, \xi_1)\> = \<U, C^K_\pi(\xi_1, \xi_2)\>
$$
We will be interested in finding infinitesimal conditions for $\tau_\pi^K$ to vanish. Our goal is to show that $\tau_\pi^K$ is a multiplicative tensor and to relate the vanishing of the corresponding IM $(2,1)$-tensor to the IM equations $\D^\top$ has to satisfy. So, let us define:
\begin{align*}
 \mathcal{Q}_r(X, \mu)& := \rho_*(D_X^\top(\mu)) - D_X^{r,T}(\rho_*(\mu))\\
 \mathcal{Q}_l(\mu_1, \mu_2) & :=  [\mu_1,l^*(\mu_2)]_*- D^\top_{\rho_*(\mu_2)}(\mu_1) - l^*([\mu_1,\mu_2]_*)\\
 \mathcal{Q}_D(X,\mu_1, \mu_2) & := D_X^\top([\mu_1, \mu_2]_*) -[D_X^\top(\mu_1), \mu_2] - [\mu_1, D^\top_X(\mu_2)]\\
& \hspace{-30pt} - D^\top_{[\rho_*(\mu_2), X]}(\mu_1) + D^{\top}_{[\rho_*(\mu_1), X]}(\mu_2)
%
\end{align*}

We will need to introduce the notion of projectable 1-forms and vector fields on $\G$. We say that a 1-form $\xi \in \Gamma(T^*\G)$ (resp. vector field $U \in \Gamma(T\G)$) is \textit{projectable} if there exists $\mu \in \Gamma(A^*)$ (resp. $X \in \Gamma(TM)$) such that $\widetilde{t}\circ \xi = \mu \circ t$ (resp. $Tt \circ U = X \circ t$). Here, $\widetilde{t}: T^*\G \to A^*$ is the target map of the cotangent groupoid. Recall that, 
$$
 \<\widetilde{t}(\xi), a\> = \<\xi, \overrightarrow{a}\>.
$$

\begin{lem}\label{lem:tau_lr}
For $a \in \Gamma(A), \, \alpha \in \Gamma(T^*M)$, 
$$
 i_{t^*\beta} \tau_\pi^K = \mathcal{T}(Q_r(\beta)), \,\,\,\,
 \tau_\pi^K(\overrightarrow{a}, \cdot) = \mathcal{T}(Q_l(a)),
$$
where $Q_r: T^*M \to T^*M \otimes A$ and $Q_l: A \to \wedge^2 A$ are given by 
$$
Q_r(\beta)|_{(X,\mu)}  = \<\beta, \mathcal{Q}_r(X,\mu)\>, \,\,\, Q_l(a)|_{(\mu_1,\mu_2)} = \<\mathcal{Q}_l(\mu_1, \mu_2), a\>\\
$$
\end{lem}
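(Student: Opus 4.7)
\end{lem}

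\begin{proof}
The plan is to verify each identity by direct expansion, using that $R_\pi^K$ and $C_\pi^K$ are genuine tensors under the hypothesis $\pi^\sharp\circ K^* = K\circ \pi^\sharp$, together with the multiplicativity of $\pi$ and $K$ which tightly constrains their infinitesimal action on pullback forms $t^*\alpha$ and right-invariant fields $\overrightarrow{a}$. The key inputs are the IM data $(\delta, \rho_*)$ of $\pi$, the IM data $\D = (D, l, r)$ of $K$, and the duality formula \eqref{eqn:1_duality} relating these to $\D^\top = (D^\top, l^*, r)$. Two foundational identities will be used repeatedly: $\pi^\sharp(t^*\gamma) = -\overrightarrow{\mathfrak{r}(\gamma)}$ (from $i_{t^*\gamma}\pi = \mathcal{T}(\mathfrak{r}(\gamma))$) and $K(\overrightarrow{a}) = \overrightarrow{l(a)}$ (from $i_{\overrightarrow{a}}K = \mathcal{T}(\mathfrak{l}(a))$).

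For the first identity $i_{t^*\beta}\tau^K_\pi = \mathcal{T}(Q_r(\beta))$, we expand
$$R_\pi^K(U, t^*\beta) = \pi^\sharp\bigl(\Lie_U K^*(t^*\beta) - \Lie_{K(U)}(t^*\beta)\bigr) - [\pi^\sharp(t^*\beta), K](U)$$
and specialize to $U = \overrightarrow{a}$ paired against a projectable 1-form $\xi$ with $\widetilde{t}\circ \xi = \mu\circ t$. The multiplicativity identities for $\pi$ and $K$ convert all $\pi^\sharp$-contractions and $K$-actions into right-invariant vector fields and their derivatives, and \eqref{eqn:1_duality} rewrites the Lie derivatives along $\overrightarrow{a}$ in terms of $D^\top$. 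After cancellations, the pairing $\<\xi, R_\pi^K(\overrightarrow{a}, t^*\beta)\>$ reduces to $\<\beta, \rho_*(D^\top_{\rho(a)}\mu) - D^{r,T}_{\rho(a)}(\rho_*\mu)\>\circ t$, which is exactly the value of $\mathcal{T}(Q_r(\beta))$ on $(\overrightarrow{a}, \xi)$.

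For the second identity $\tau^K_\pi(\overrightarrow{a}, \cdot, \cdot) = \mathcal{T}(Q_l(a))$, we use the dual expression $\tau^K_\pi(U, \xi_1, \xi_2) = \<U, C^K_\pi(\xi_1, \xi_2)\>$, choose test forms $\xi_i = \widetilde{\mu_i}$ projecting to $\mu_i\in \Gamma(A^*)$, and exploit the descent identity
$$\<\overrightarrow{a}, [\widetilde{\mu_1}, \widetilde{\mu_2}]_\pi\> = \<[\mu_1, \mu_2]_*, a\>\circ t,$$
which follows from \eqref{eqn:delta_bracket} and multiplicativity of $\pi$. The analogous identity for $[\cdot,\cdot]_{\pi_K}$ produces the bracket $[\cdot,\cdot]_{l^*}$ from Lemma \ref{lem:IM4}; for the cross terms $[K^*\widetilde{\mu_i}, \widetilde{\mu_j}]_\pi$ and $K^*[\widetilde{\mu_1}, \widetilde{\mu_2}]_\pi$, one must identify the $\widetilde{t}$-projection of $K^*$ applied to a projectable form, which is controlled by $l^*$ at the projectable level and by $D^\top$ at the correction level via \eqref{eqn:1_duality}. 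Assembling the four resulting terms yields $\<Q_l(a), \mu_1\wedge \mu_2\>\circ t = \<\mathcal{T}(Q_l(a)), \widetilde{\mu_1}\wedge \widetilde{\mu_2}\>$. The main obstacle is precisely this algebraic bookkeeping: the $D^\top$-corrections in the four brackets must conspire to cancel, leaving only $[\mu_1, l^*(\mu_2)]_* - D^\top_{\rho_*(\mu_2)}(\mu_1) - l^*([\mu_1, \mu_2]_*) = \mathcal{Q}_l(\mu_1, \mu_2)$.
\end{proof}
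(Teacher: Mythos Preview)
Your outline has a genuine gap in the first identity and a shaky step in the second.

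\textbf{First identity.} You specialize to $U=\overrightarrow{a}$, which forces the $T^*\G$-slot of $i_{t^*\beta}\tau_\pi^K$ to be tested only against $\ker Ts$; equivalently, you only verify $Q_r(\beta)|_{(X,\mu)}$ for $X=\rho(a)$. Since $\rho$ need not be surjective and since $\tau_\pi^K$ is not yet known to be multiplicative, this does not determine the tensor. The paper avoids this by working with an arbitrary $t$-projectable $U$ (projection $X$) and using the Magri--Morosi expression $R_\pi^K(\xi,U)=\pi^\sharp(D^{K,T^*}_U\xi)-D^{K,T}_U(\pi^\sharp\xi)$ together with the projection identities $\widetilde t\circ D^{K,T^*}_U\xi = D^\top_X\mu\circ t$ and $Tt\circ\pi^\sharp=-\rho_*\circ\widetilde t$; pairing with $t^*\beta$ then yields $\langle\beta,\mathcal{Q}_r(X,\mu)\rangle\circ t$ for all $X$.

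\textbf{Second identity.} Your route through $C_\pi^K$ is viable in principle, but two of your inputs are off. First, the pre-Lie bracket on $A^*$ induced by $\pi_K$ is $[\cdot,\cdot]_{\D^\top}$, not $[\cdot,\cdot]_{l^*}$; the two agree precisely when $\mathcal{Q}_l=0$, which is what you are trying to detect, so invoking $[\cdot,\cdot]_{l^*}$ here is circular. Second, the ``descent identity'' $\langle\overrightarrow{a},[\widetilde{\mu_1},\widetilde{\mu_2}]_\pi\rangle=\langle a,[\mu_1,\mu_2]_*\rangle\circ t$ for arbitrary projectable $\widetilde{\mu_i}$ is not an immediate consequence of \eqref{eqn:delta_bracket} and multiplicativity; the bracket $[\xi_1,\xi_2]_\pi$ of projectable forms is not a priori projectable, and you have not shown that the pairing with $\overrightarrow{a}$ sees only the projections. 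The paper sidesteps both issues by rewriting
\[
R_\pi^K(\overrightarrow{a},\cdot)=(\Lie_{K(\overrightarrow{a})}\pi)^\sharp-K\circ(\Lie_{\overrightarrow{a}}\pi)^\sharp+\pi^\sharp\circ(\Lie_{\overrightarrow{a}}K)^*,
\]
so that the IM identities \eqref{dfn:Dlr_mult} immediately produce $\delta(l(a))$, $\delta(a)$, $D(a)$; pairing with projectable $\xi_1,\xi_2$ and one application of \eqref{eqn:delta_bracket} and \eqref{eqn:1_duality} then gives $\mathcal{Q}_l$ directly, with no need to control $[\xi_1,\xi_2]_\pi$.
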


\begin{proof}
 For projectable vector fields $U, \, V \in \Gamma(TM)$, note that $D^{K, T}_U(V)$ is also projectable over $D^{r, T}_X(Y)$, where $X, \,Y$ are the projections of $U, \,V$ respectively. Also,
 \begin{equation}\label{eq:proj_D_ctg}
 \widetilde{t} \circ D^{K,T^*}_U(\xi) = D^{\top}_X(\mu) \circ t,
 \end{equation}
 for projectable 1-form $\xi \in \Gamma(T^*M)$ with projection $\mu \in \Gamma(A^*)$. Indeed, it follows from \eqref{eqn:1_duality} and \eqref{dfn:Dlr_mult}.
 Using that $Tt \circ \pi^\sharp = - \rho_* \circ \widetilde{t}$, it is now straightforward to check that  $i_{t^*\beta} \tau_\pi^K = \mathcal{T}(Q_r(\beta))$.
 
 Let now $a \in \Gamma(A)$. By recombining the terms on $R_\pi^K$ and using that $\Lie_U (K^*(\xi)) = (\Lie_UK)^*(\xi) + K^*(\Lie_U\xi)$, one obtains
 \begin{align*}
  R_\pi^K(\overrightarrow{a}, \cdot) = (\Lie_{K(\overrightarrow{a})} \pi)^\sharp(\cdot) - K((\Lie_{\overrightarrow{a}}\pi)^\sharp(\cdot)) + \pi^\sharp((\Lie_{\overrightarrow{a}}K)^*(\cdot)).
  \end{align*}
  By choosing projectable $\xi_1, \, \xi_2 \in \Gamma(T^*\G)$, one obtains
 \begin{align*}
 \tau_\pi^K(\overrightarrow{a}, \xi_1, \xi_2)& = t^*(\delta(l(a))(\mu_1, \mu_2) - \delta(a)(\mu_1, l^*(\mu_2)) + \<\mu_1, D_{\rho_*(\mu_2)}(a)\>)\\
 & = \<[\mu_1, l^*(\mu_2)]_* - l^*([\mu_1,\mu_2]_*), a\> \\
 & \hspace{-30pt} -(\Lie_{\rho_*(\mu_2))}\<\mu_1, l(a)\> - \Lie_{r(\rho_*(\mu_2))}\<\mu_1, a\> - \<\mu_1, D_{\rho_*(\mu_2)}(a))\> \\ 
 & = t^* \<[\mu_1, l^*(\mu_2)]_* - l^*([\mu_1,\mu_2]_*) - D^\top_{\rho_*(\mu_2)}(\mu_1)), a\> \\
 & = t^*(Q_l(a)(\mu_1, \mu_2)).
 \end{align*}
 where $\delta: \Gamma(A) \to \Gamma(\wedge^2 A)$ is the Chevalley differential of $A^*$ and we have used \eqref{eqn:1_duality}, \eqref{dfn:Dlr_mult} for both $\pi$ and $K$, and \eqref{eqn:delta_bracket}. 
\end{proof}

Our main technical result which will imply Theorem \ref{thm:pn_grpd} is the following Lemma.
\begin{prop}\label{prop:Lie_tau}
$$
\Lie_{\overrightarrow{a}}\tau_\pi^K =\mathcal{T}(Q_D(a))
$$
where
\begin{align*}
Q_D(a)|_{(X, \mu_1, \mu_2)} & = \<a, \mathcal{Q}_D(\mu_1,\mu_2)\> + \Lie_X \<\mathcal{Q}_l(\mu_1, \mu_2), a\>\\
& \hspace{-40pt} + \Lie_{\mathcal{Q}_r(\mu_1,X)}\<\mu_2, a\> - \Lie_{\mathcal{Q}_r(\mu_2,X)}\<\mu_1, X\> 
\end{align*}
\end{prop}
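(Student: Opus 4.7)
The strategy is a direct evaluation: compute $(\Lie_{\overrightarrow{a}}\tau_\pi^K)(U, \xi_1, \xi_2)$ on a probe triple consisting of a projectable vector field $U$ over $X \in \Gamma(TM)$ and projectable 1-forms $\xi_1, \xi_2$ over $\mu_1, \mu_2 \in \Gamma(A^*)$, and show the result pulls back via $t^*$ to $Q_D(a)|_{(X, \mu_1, \mu_2)}$. Such probes suffice because projectable sections generate $T\G$ and $T^*\G$ as $C^\infty(\G)$-modules, and both sides of the claimed identity are tensorial in $(U,\xi_1,\xi_2)$.

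\medskip

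The first step is to apply the Leibniz rule for tensors:
\begin{align*}
(\Lie_{\overrightarrow{a}}\tau_\pi^K)(U, \xi_1, \xi_2) = {} & \Lie_{\overrightarrow{a}}\bigl(\tau_\pi^K(U, \xi_1, \xi_2)\bigr) - \tau_\pi^K([\overrightarrow{a}, U], \xi_1, \xi_2) \\
& - \tau_\pi^K(U, \Lie_{\overrightarrow{a}}\xi_1, \xi_2) - \tau_\pi^K(U, \xi_1, \Lie_{\overrightarrow{a}}\xi_2).
\end{align*}
The leading term $\Lie_{\overrightarrow{a}}(\tau_\pi^K(U,\xi_1,\xi_2))$ is then expanded by writing $\tau_\pi^K(U,\xi_1,\xi_2)=\langle\xi_2,R_\pi^K(\xi_1,U)\rangle$ with
$$
R_\pi^K(\xi, U) \;=\; \pi^\sharp(\Lie_U K^*\xi) - \pi^\sharp(\Lie_{K(U)}\xi) - [\pi^\sharp(\xi), K](U),
$$
and distributing $\Lie_{\overrightarrow{a}}$ across each building block $\pi$, $K$, $U$, $\xi_i$. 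On the multiplicative structures one uses $\Lie_{\overrightarrow{a}}\pi = \mathcal{T}(\delta(a))$, $\Lie_{\overrightarrow{a}}K = \mathcal{T}(D(a))$ and $i_{\overrightarrow{a}}K = \mathcal{T}(l(a))$, together with \eqref{eq:proj_D_ctg} (and its analogue for projectable vector fields under $D^{K,T}$) to project each factor down to $M$.

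\medskip

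The second step is to organize the resulting terms into two families. The pieces where $\Lie_{\overrightarrow{a}}$ hits the multiplicative data $\pi$ or $K$ will, after substituting $\delta$ via \eqref{eqn:delta_bracket} and rewriting $D^\top$ via \eqref{eqn:1_duality}, collapse exactly onto $t^*\langle a, \mathcal{Q}_D(\mu_1,\mu_2)\rangle$. The remaining pieces, combined with the three correction terms $-\tau_\pi^K([\overrightarrow{a},U],\xi_1,\xi_2)$ and $-\tau_\pi^K(U,\Lie_{\overrightarrow{a}}\xi_i,\cdot)$, are rewritten using Lemma \ref{lem:tau_lr}: the identity $i_{t^*\beta}\tau_\pi^K=\mathcal{T}(Q_r(\beta))$ produces the $\mathcal{Q}_r$-derivative summands $\Lie_{\mathcal{Q}_r(\mu_1,X)}\langle\mu_2,a\rangle - \Lie_{\mathcal{Q}_r(\mu_2,X)}\langle\mu_1,a\rangle$, while $i_{\overrightarrow{a}}\tau_\pi^K=\mathcal{T}(Q_l(a))$ contributes the term $\Lie_X\langle\mathcal{Q}_l(\mu_1,\mu_2),a\rangle$. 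Assembling both families reproduces the four summands of $Q_D(a)|_{(X,\mu_1,\mu_2)}$.

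\medskip

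The main obstacle is the bookkeeping: the expansion of $\Lie_{\overrightarrow{a}}R_\pi^K(\xi_1,U)$ produces a dozen terms arising from the chain rule applied to a trilinear expression involving $\pi$, $K$, $K^*$, $\xi_1$ and $U$, and one must verify that exactly the right combinations cancel or recombine. What makes this work conceptually is that the correction terms along $X$, $\mu_1$, $\mu_2$ mandated by the Leibniz equation \eqref{eq:IM_leibniz} for the would-be IM data $(Q_D,Q_l,Q_r)$ are the same corrections generated by the tensor Leibniz rule applied to $\tau_\pi^K$; the content of the computation is to see this matching explicitly. Once this is done, the conclusion follows because two tensors on $\G$ that agree on all such projectable probes must coincide.
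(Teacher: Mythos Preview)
Your overall strategy---evaluate on projectable triples $(U,\xi_1,\xi_2)$ and exploit the multiplicativity formulas $\Lie_{\overrightarrow{a}}\pi=\mathcal{T}(\delta(a))$, $\Lie_{\overrightarrow{a}}K=\mathcal{T}(D(a))$---is the same as the paper's. But the way you predict the terms will split is wrong, and this matters because the ``bookkeeping'' you defer is the entire proof.

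Concretely: when you expand $\Lie_{\overrightarrow{a}}\bigl(\tau_\pi^K(U,\xi_1,\xi_2)\bigr)$ by distributing over all building blocks and then subtract the three tensor-Leibniz corrections, the pieces where $\Lie_{\overrightarrow{a}}$ hits $U$, $\xi_1$, $\xi_2$ cancel \emph{exactly} against those corrections. Nothing survives from that side. What remains is purely $\langle\xi_2,(\Lie_{\overrightarrow{a}}R_\pi^K)(\xi_1,U)\rangle$, i.e.\ only the $\pi$ and $K$ contributions. The paper isolates this via Lemma~\ref{lem:MM1}, which gives the clean splitting $\Lie_{\overrightarrow{a}}R_\pi^K = R_{\overrightarrow{\delta(a)}}^K + R_\pi^{\mathcal{T}(D(a))}$, and then computes each piece separately (the second directly, the first via Lemma~\ref{lem:MM2}). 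The resulting expressions $\Sigma$ and $\Upsilon$ together produce \emph{all four} summands of $Q_D$: the $\mathcal{Q}_l$ and $\mathcal{Q}_r$ terms emerge from a careful regrouping of $\Upsilon$ using the duality formula~\eqref{eqn:1_duality} and~\eqref{eqn:delta_bracket}, not from any surviving Leibniz corrections.

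Your proposed use of Lemma~\ref{lem:tau_lr} therefore does not apply: that lemma computes $\tau_\pi^K$ contracted with the special sections $t^*\beta$ and $\overrightarrow{a}$, but the correction terms involve $[\overrightarrow{a},U]$ and $\Lie_{\overrightarrow{a}}\xi_i$, which are merely projectable and carry no direct relation to $Q_l$, $Q_r$. The conceptual picture in your last paragraph---matching the IM Leibniz equation~\eqref{eq:IM_leibniz} with the tensor Leibniz rule---conflates two unrelated identities (one in the $a$-variable, one in the test sections). The actual mechanism producing the $\mathcal{Q}_l$, $\mathcal{Q}_r$ terms is algebraic regrouping inside $\Upsilon$, and Lemmas~\ref{lem:MM1} and~\ref{lem:MM2} are what make that regrouping tractable.
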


The proof of Proposition \ref{prop:Lie_tau} will depend on two lemmas.

\begin{lem}\label{lem:MM1}
 Given $(\pi, r)$ on $M$, one has that
 $$
 \Lie_X C_\pi^r = C_{[X,\pi]}^r + C_{\pi}^{[X,r]}.
 $$
\end{lem}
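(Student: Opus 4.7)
My proposal is to prove this by naturality under the flow of $X$. The concomitant $C_\pi^r$ is assembled from $\pi$ and $r$ via operations that commute with diffeomorphisms, namely the symbol maps $\pi\mapsto\pi^\sharp$ and $r\mapsto r^*$, the de Rham differential, and interior and Lie derivatives along Hamiltonian vector fields. Crucially, it depends bilinearly on the pair $(\pi, r)$: the bivector $\pi_r$ determined by $\pi_r^\sharp=r\circ\pi^\sharp$ is linear in each of $\pi$ and $r$ separately, and so is every term on the right of
\[
C_\pi^r(\alpha,\beta) = [\alpha,\beta]_{\pi_r} - [r^*\alpha,\beta]_\pi - [\alpha,r^*\beta]_\pi + r^*[\alpha,\beta]_\pi.
\]
Consequently, if $\phi_t$ denotes the flow of $X$, the naturality identity $\phi_t^*(C_\pi^r(\alpha,\beta)) = C_{\phi_t^*\pi}^{\phi_t^*r}(\phi_t^*\alpha,\phi_t^*\beta)$ holds for all $\alpha,\beta$ and all $t$.

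Differentiating at $t=0$ and using $\Lie_X\pi=[X,\pi]$ for the Schouten bracket together with the fact that for a vector field $X$ the Fr\"olicher-Nijenhuis bracket $[X,r]$ reduces to the ordinary Lie derivative $\Lie_X r$, I obtain
\[
\Lie_X\bigl(C_\pi^r(\alpha,\beta)\bigr) = C_{[X,\pi]}^{r}(\alpha,\beta) + C_\pi^{[X,r]}(\alpha,\beta) + C_\pi^r(\Lie_X\alpha,\beta) + C_\pi^r(\alpha,\Lie_X\beta).
\]
Under the standing compatibility hypothesis $r\circ\pi^\sharp=\pi^\sharp\circ r^*$ (under which $C_\pi^r$ is treated as a tensor), $C_\pi^r$ is $C^\infty$-bilinear in $(\alpha,\beta)$, so the left-hand side equals $(\Lie_X C_\pi^r)(\alpha,\beta) + C_\pi^r(\Lie_X\alpha,\beta) + C_\pi^r(\alpha,\Lie_X\beta)$. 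Cancelling the common terms yields the asserted identity.

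A hands-on alternative is to verify the identity term-by-term, which amounts to combining the Leibniz rules $\Lie_X(r^*\alpha) = [X,r]^*\alpha + r^*(\Lie_X\alpha)$ and $\Lie_X(\pi^\sharp\alpha) = [X,\pi]^\sharp\alpha + \pi^\sharp(\Lie_X\alpha)$ with the standard commutators $[\Lie_X,d]=0$, $[\Lie_X,i_Y]=i_{[X,Y]}$ and $[\Lie_X,\Lie_Y]=\Lie_{[X,Y]}$ to derive $\Lie_X[\alpha,\beta]_\pi = [\Lie_X\alpha,\beta]_\pi + [\alpha,\Lie_X\beta]_\pi + [\alpha,\beta]_{[X,\pi]}$ and the analogous formula for $[\alpha,\beta]_{\pi_r}$ (using that $\pi_r$ is bilinear in $(\pi,r)$), then substituting back into the defining expression of $C_\pi^r$. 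The principal obstacle on this route is purely notational bookkeeping, which is why I would favour the naturality argument in the actual write-up.
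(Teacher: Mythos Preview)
Your naturality argument is correct and genuinely different from the paper's proof. The paper works with the equivalent Magri--Morosi form $R_\pi^r(Y,\alpha)=\pi^\sharp(D^{r,T^*}_Y\alpha)-D^{r,T}_Y(\pi^\sharp\alpha)$ and computes $(\Lie_X R_\pi^r)(Y,\alpha)=[X,R_\pi^r(Y,\alpha)]-R_\pi^r([X,Y],\alpha)-R_\pi^r(Y,\Lie_X\alpha)$ directly, splitting it as $A-B$ and using the Jacobi identity for the Fr\"olicher--Nijenhuis bracket together with the explicit formulas for $D^{r,T}$ and $D^{r,T^*}$ to identify the result as $R_{[X,\pi]}^r+R_\pi^{[X,r]}$. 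Your flow-based argument is shorter and more conceptual, and it is exactly the sort of ``no computation'' reasoning that the bilinear, diffeomorphism-natural nature of the concomitant invites; the paper's computation, by contrast, stays inside the generalized-derivation formalism and makes the role of $D^{r,T}$, $D^{r,T^*}$ manifest, which is convenient for the way the lemma is consumed in the proof of Proposition~\ref{prop:Lie_tau}. One small point: you do not actually need the compatibility hypothesis $r\circ\pi^\sharp=\pi^\sharp\circ r^*$. The paper simply \emph{defines} $(\Lie_X R_\pi^r)(Y,\alpha)$ by the tensorial formula above (valid for any $\R$-bilinear operator), and your derivation already produces exactly that combination before you invoke tensoriality; so you may drop the hypothesis and state the identity at the level of $\R$-bilinear operators, matching the paper's generality.
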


\begin{proof}
First note that,
\begin{align*}
(\Lie_X \mathcal{R}^r_\pi)(Y,\alpha) & = [X, \mathcal{R}^r_\pi(Y,\alpha)] - \mathcal{R}^r_\pi([X,Y], \alpha) - \mathcal{R}^r_\pi(Y,\Lie_X\alpha)\\
& = A - B
\end{align*}
where
\begin{align*}
A & = [X, \pi^\sharp(D^{r, T^*}_Y(\alpha))] - \pi^\sharp(D_{[X,Y]}^{r,T^*}(\alpha)) - \pi^\sharp(D^{r, T^*}_Y(\Lie_X\alpha))\\
  & = [X, \pi]^\sharp(D^{r, T^*}_Y(\alpha)) + \pi^\sharp(\Lie_X D_Y^{r,T^*}(\alpha) - D_{[X,Y]}^{r,T^*}(\alpha) - D_Y^{r,T^*}(\Lie_X\alpha))\\ 
B & = [X, D^{r,T}_{Y}(\pi^\sharp(\alpha))] - D^{r,T}_{[X,Y]}(\pi^\sharp(\alpha)) - D^{r,T}_Y(\pi^\sharp(\Lie_X\alpha).
\end{align*}
Now, for any $Z \in \mathfrak{X}(M)$, using the Jacobi identity for the Fr\"olicher-Nijenhuis bracket,
\begin{align*}
 [X,D^{r,T}_Y(Z)]-D_{[X,Y]}^{r,T}(Z) & = [X, [Z,r](Y)] - [Z,r]([X,Y])\\
  & = [X,[Z,r]](Y)\\
 & = D^{r,T}_Y([X,Z]) + D^{[X,r],T}_Y(Z).
\end{align*}
By letting $Z=\pi^\sharp(\alpha)$, one has that
$$
B = D^{[X,r],T}_Y(\pi^\sharp(\alpha)) + D_{Y}^{r,T}([X,\pi]^\sharp(\alpha))
$$
Similarly, by using that $[X,r]^*(\alpha) = \Lie_X r^*(\alpha) -r^*(\Lie_X\alpha)$, one can write
\begin{align*}
D^{r,T^*}_Y(\Lie_X\alpha) & = \Lie_Y\Lie_X(r^*\alpha) - \Lie_Y [X,r]^*(\alpha) - \Lie_{r(Y)}\Lie_X\alpha
\end{align*}
Hence, $\Lie_X D_Y^{r,T^*}(\alpha) - D_{[X,Y]}^{r,T^*}(\alpha) - D_Y^{r,T^*}(\Lie_X\alpha)$ equals
\begin{align*}
 \Lie_{[r(Y),X]} \alpha +  \Lie_{r([X,Y])} \alpha + \Lie_Y [X,r]^*\alpha = D^{[X,r],T^*}_Y(\alpha)
\end{align*}
and
$$
A = [X, \pi]^\sharp(D^{r, T^*}_Y(\alpha)) + \pi^\sharp(D^{[X,r],T^*}_Y(\alpha)).
$$
This concludes the proof.
\end{proof}

\begin{lem}\label{lem:MM2}
 Let $\gamma \in \Gamma(\wedge^2 A)$ and consider $\overrightarrow{\gamma} \in \Gamma(\wedge^2 TG)$. For projectable $\xi \in \Gamma(T^*M)$ and $U \in \Gamma(T\G)$, one has that
 $$
 R_{\overrightarrow{\gamma}}^K (\xi, U) = \overrightarrow{\gamma^\sharp(D_X^\top(\mu)) - D_X(\gamma^\sharp(\mu))},
 $$
 where $X \in \frakx(M)$, $\mu \in \Gamma(A^*)$ are the projections of $U$ and $\xi$, respectively, and $\gamma^\sharp: A^* \to A$ is the contraction map corresponding to $\gamma$.
\end{lem}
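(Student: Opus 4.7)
My plan is to expand $R_{\overrightarrow{\gamma}}^K(\xi, U)$ according to the definition of the Magri--Morosi concomitant and evaluate each of its two pieces using the multiplicativity of $K$ together with the behaviour of projectable forms and vector fields already exploited in the proof of Lemma~\ref{lem:tau_lr}. Writing the definition out,
$$
R_{\overrightarrow{\gamma}}^K(\xi, U) \;=\; \overrightarrow{\gamma}^\sharp\!\bigl(\Lie_U K^*(\xi) - \Lie_{K(U)}\xi\bigr) \;-\; [\,\overrightarrow{\gamma}^\sharp(\xi),\, K\,](U).
$$

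The first preliminary step is the pointwise identity $\overrightarrow{\gamma}^\sharp(\xi) = \overrightarrow{\gamma^\sharp(\mu)}$ for any projectable 1-form $\xi$ with projection $\mu$. This is verified at $g \in \G$ by writing $\gamma_{t(g)} = \sum_i a_i \wedge b_i$, expanding $\overrightarrow{\gamma}_g^\sharp(\xi_g) = \sum_i \xi_g(\overrightarrow{b_i}|_g) \overrightarrow{a_i}|_g - \xi_g(\overrightarrow{a_i}|_g) \overrightarrow{b_i}|_g$, and using $\xi_g(\overrightarrow{c}|_g) = \widetilde{t}(\xi_g)(c) = \mu_{t(g)}(c)$, which holds by projectability of $\xi$. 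For the first piece of $R_{\overrightarrow{\gamma}}^K$, I would recognize $\Lie_U K^*(\xi) - \Lie_{K(U)}\xi$ as $D^{K, T^*}_U(\xi)$ via formula \eqref{D_ctg} applied to the vector-valued 1-form $K$ on $\G$. By \eqref{eq:proj_D_ctg}, this expression is again projectable, now with projection $D^\top_X(\mu)$. Combined with the preliminary identity, the first piece equals $\overrightarrow{\gamma^\sharp(D^\top_X(\mu))}$.

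For the second piece, set $a := \gamma^\sharp(\mu) \in \Gamma(A)$, so $\overrightarrow{\gamma}^\sharp(\xi) = \overrightarrow{a}$ by the preliminary identity. The Fr\"olicher--Nijenhuis bracket of a vector field with a vector-valued 1-form coincides with the Lie derivative, so $[\overrightarrow{a}, K] = \Lie_{\overrightarrow{a}} K$. Multiplicativity of $K$ then gives $\Lie_{\overrightarrow{a}} K = \mathcal{T}(D(a))$ (see \S\ref{subsec:IM_tensors}), and evaluating $\mathcal{T}(D(a))$ on a projectable vector field $U$ with projection $X$ reduces, termwise in a decomposition $D(a) = \sum_i \beta_i \otimes a_i$, to $\sum_i \beta_i(X \circ t)\, \overrightarrow{a_i} = \overrightarrow{D_X(a)}$. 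Subtracting the two pieces yields precisely
$$
R_{\overrightarrow{\gamma}}^K(\xi, U) = \overrightarrow{\gamma^\sharp(D^\top_X(\mu)) - D_X(\gamma^\sharp(\mu))},
$$
as claimed.

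There is no serious obstacle here; the whole argument rests on the two bookkeeping observations that (a) $\overrightarrow{\gamma}^\sharp$ produces a right-invariant vector field when applied to a projectable covector, and (b) the cotangent-lift operator $D^{K, T^*}_U$ sends projectable covectors to projectable covectors (equation \eqref{eq:proj_D_ctg}). These two facts funnel the computation into the correspondence $\Lie_{\overrightarrow{a}} K \leftrightarrow D$ supplied by multiplicativity, and everything assembles into the stated right-invariant vector field.
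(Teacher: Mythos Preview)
Your proof is correct and follows essentially the same route as the paper's: both split $R_{\overrightarrow{\gamma}}^K(\xi,U)$ as $\overrightarrow{\gamma}^\sharp(D^{K,T^*}_U(\xi)) - D^{K,T}_U(\overrightarrow{\gamma}^\sharp(\xi))$, use the identity $\overrightarrow{\gamma}^\sharp(\xi)=\overrightarrow{\gamma^\sharp(\mu)}$ together with \eqref{eq:proj_D_ctg} for the first term, and the multiplicativity relation $\Lie_{\overrightarrow{a}}K=\mathcal{T}(D(a))$ from \eqref{dfn:Dlr_mult} for the second. Your write-up simply unpacks a few of these steps in slightly more detail.
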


\begin{proof}
 First note that $\overrightarrow{\gamma}^\sharp(\xi) = \overrightarrow{\gamma^\sharp(\mu)}$. From \eqref{eq:proj_D_ctg}, 
 $$
 \overrightarrow{\gamma}^\sharp(D^{K, T^*}_U(\xi)) = \overrightarrow{\gamma^\sharp(D^\top_X(\mu))}.
 $$
 Also, from \eqref{dfn:Dlr_mult}
 $$
 D^{K,T}_U(\overrightarrow{\gamma}^\sharp(\xi)) = D^{K, T}_U(\overrightarrow{\gamma^\sharp(\mu)}) = [\overrightarrow{\gamma^\sharp(\mu)}, K](U) = \overrightarrow{D_X(\gamma^\sharp(\mu))}.
 $$
 This concludes the proof.
\end{proof}

\begin{proof}[Proof of Prop.~\ref{prop:Lie_tau}]
From Lemma \ref{lem:MM1} and \eqref{dfn:Dlr_mult},
$$
 \Lie_{\overrightarrow{a}}R^{K}_\pi = R_{[\overrightarrow{a}, \pi]}^K + R_{\pi}^{[\overrightarrow{a},K]} = C_{\overrightarrow{\delta(a)}}^K + C_\pi^{\mathcal{T}(D(a))}
 $$
 Now, for projectable $U \in \mathfrak{X}(\G)$ and $\xi_1,\, \xi_2 \in \Gamma(T^*\G)$ with projections $X \in \frakx(M)$ and $\mu_1, \mu_2 \in \Gamma(A^*)$, respectively, one has that
 $$
 \<\xi_2, R_{\pi}^{\mathcal{T}(D(a))}(U, \xi_1)\> = t^*\Sigma,
 $$
 where
\begin{align*}
 \Sigma & = \Lie_X\<\mu_1, D_{\rho_*(\mu_2)}(a)\> + \<\mu_1,D_{[\rho_*(\mu_2),X]}(a)\> + \delta(D_X(a))(\mu_1,\mu_2)\\
 & \hspace{-15pt}- \<\mu_2, D_{[\rho_*(\mu_1), X]}(a)\>.
\end{align*}
Indeed, call $\mathcal{T}(D(a)) = \Phi: T\G \to T\G$. Note that $\Phi(U) = \overrightarrow{D_X(a)}$ and $Tt(\pi^\sharp(\xi_i))= - \rho_*(\mu_i)$, for $i=1,2$. Hence, from \eqref{dfn:Dlr_mult} for both $K$ and $\pi$, one has that
 \begin{align*}
 \<\xi_2, R_\pi^\Phi(U,\xi_1)\> &  = \<\xi_2, \pi^\sharp(\Lie_U \Phi^*(\xi_1)- \Lie_{\Phi(U)}\xi_1) - [\pi^\sharp(\xi_1), \Phi](U)\>\\
 & = \<\xi_2, \pi^\sharp(\Lie_U \Phi^*(\xi_1)) + [\Phi(U),\pi]^\sharp(\xi_1) + \Phi([\pi^\sharp(\xi_1), U])\>\\
 & = - \Lie_U\<\xi_1, \Phi(\pi^\sharp(\xi_2))\> + \<\xi_1, \Phi([U, \pi^\sharp(\xi_2)])\>  \\
 & \hspace{-20pt} t^*\left(\delta(D_X(a))(\mu_1,\mu_2) -\<\mu_2, D_{[\rho_*(\mu_1), X]}(a)\>\right)\\
 & = t^*\Sigma.
\end{align*}
Also, from Lemma \ref{lem:MM2}
$$
\<\xi_2, R_{\overrightarrow{\delta(a)}}^K(U, \xi_1)\> = t^*\left(\delta(a)(D_X^\top(\mu_1),\mu_2) - \<\mu_2, D_X(\delta(a)^\sharp(\mu_2))\>\right) =: t^*\Upsilon,
$$
Using \eqref{eq:dual_D} and \eqref{eqn:delta_bracket} repeatedly, we can re-arrange the terms on $\Upsilon$ as follows:
\begin{align*}
\Upsilon & = \mathcal{L}_{11}(\<\mu_1, a\>) + \mathcal{L}_{12}(\<\mu_1, l(a)\>) + \mathcal{L}_{21}(\<\mu_2,a\>) + \mathcal{L}_{22}(\<\mu_2,l(a)\>)\\
& \hspace{-10pt} + \Lie_X\<[\mu_1,l^*(\mu_2)],a\> - \Lie_{r(X)}\<[\mu_1,\mu_2]_*, a\> + \Lie_{\rho_*(\mu_2)}\<\mu_1, D_X(a)\>\\
& \hspace{-10pt} - \Lie_{\rho_*(\mu_1)}\<\mu_2, D_X(a)\> - \<[D_X^\top(\mu_1), \mu_2]_* + [\mu_1, D_X^\top(\mu_2)]_*, a\>
\end{align*}
where each $\mathcal{L}_{ij}$ is an operator given by the following formulas:
\begin{align*}
 \mathcal{L}_{11} & =  \Lie_X \Lie_{r(\rho_*(\mu_2))} + \Lie_{r([\rho_*(\mu_2), X])} - \Lie_{\mathcal{Q}_r(X,\mu_2)}\\
 \mathcal{L}_{12} & = - \Lie_X \Lie_{\rho_*(\mu_2)} - \Lie_{[\rho_*(\mu_2), X]}\\
 \mathcal{L}_{21} & = \Lie_{\mathcal{Q}_r(X, \mu_1)} - \Lie_{r([\rho_*(\mu_1), X])}\\
 \mathcal{L}_{22} & = \Lie_{[\rho_*(\mu_1),X]}
\end{align*}
Now, from 
\begin{align*}
 \Lie_{Y}\<\mu_i, l(a)\> - \Lie_{r(Y)}\<\mu_i, a\>  =  \<D_{Y}^\top(\mu_i), a\> + \<\mu_i, D_{Y}(a)\>,
\end{align*}
we can express the four first terms of $\Upsilon$ involving $\mathcal{L}_{ij}$ as:
\begin{align*}
& \Lie_{\mathcal{Q}_r(X, \mu_1)}\<\mu_2, a\> - \Lie_{\mathcal{Q}_r(X,\mu_2)}\<\mu_1, a\> - \Lie_X\<D^\top_{\rho_*(\mu_2)}(\mu_1), a\>  - \Lie_X\<\mu_1, D_{\rho_*(\mu_2)}(a)\>  \\ 
& \hspace{-5pt} - \<\mu_1, D_{[\rho_*(\mu_2), X]}(a)\>  - \<D^\top_{[\rho_*(\mu_2), X]}(\mu_1), a\>  + \<\mu_2, D_{[\rho_*(\mu_1), X]}(a)\> \\
& + \<D^\top_{[\rho_*(\mu_1), X]}(\mu_2), a\> 
\end{align*}
The term $\Lie_X\<D^\top_{\rho_*(\mu_2)}(\mu_1), a\>$ together with the fifth and sixth term of $\Upsilon$ gives
\begin{align*}
- \Lie_X\<D^\top_{\rho_*(\mu_2)}(\mu_1), a\>  + \Lie_X\<[\mu_1,l^*(\mu_2)],a\> - \Lie_{r(X)}\<[\mu_1,\mu_2]_*, a\> & =\\
& \hspace{-200pt} =  \Lie_{X}\<\mathcal{Q}_l(\mu_1,\mu_2), a\> + \Lie_X\<l^*([\mu_1, \mu_2]), a\> - \Lie_{r(X)}\<[\mu_1, \mu_2]_*,a\>\\
& \hspace{-200pt} = \Lie_X\<\mathcal{Q}_l(\mu_1,\mu_2), a\> + \<D_X^\top([\mu_1, \mu_2]_*), a\> + \<[\mu_1, \mu_2]_*, D_X(a)\>.
\end{align*}
Therefore, grouping the terms on $\mathcal{Q}_D$ together and using \eqref{eqn:delta_bracket} once again, $\Upsilon$ can be rewritten as follows:
\begin{align*}
\Upsilon = & \<\mu_2, D_{[\rho_*(\mu_1), X]}(a)\> - \delta(D_X(a))(\mu_1,\mu_2) - \Lie_X\<\mu_1,D_{\rho_*(\mu_2)}(a)\>\\
& \hspace{-15pt} - \<\mu_1, D_{[\rho_*(\mu_2),X]}(\mu_2)\> + \Lie_{\mathcal{Q}_r(X, \mu_1)}\<\mu_2, a\> - \Lie_{\mathcal{Q}_r(X,\mu_2)}\<\mu_1, a\>\\
& \hspace{-15pt} + \Lie_X\<\mathcal{Q}_l(\mu_1,\mu_2), a\> + \<\mathcal{Q}_D(X,\mu_1,\mu_2),a\>\\
 = & -\Sigma + Q_D
\end{align*}
Finally, as
$$
(\Lie_{\overrightarrow{a}}\tau_\pi^K)(U, \xi_1, \xi_2) = \<\xi_2, (\Lie_{\overrightarrow{a}} R_\pi^K)(U, \xi_1)\> = t^*(\Sigma+ \Upsilon) = t^*Q_D,
$$
the conclusion holds.
\end{proof}

The proof of Theorem \ref{thm:pn_grpd} follows directly from what we have obtained so far.
\begin{proof}[Proof of Theorem \ref{thm:pn_grpd}]
Since $\G \toto M$ is the source 1-connected groupoid integrating $A$, one has that Lie bialgebroid structures on $(A, A^*)$ correspond to multiplicative Poisson structures $\pi \in \frakx^2(\G)$ and IM (1,1)-tensors $\D \in \GDer^1(A)$ correspond to multiplicative $K: T\G \to T\G$. Moreover, the vanishing of the Nijenhuis torsion $N_K$ is equivalent to $[\D,\D]=0$ (see \cite[Cor.~6.3]{BD}). So, it remains to understand the compatibility of $\pi$ and $K$ infinitesimally. Since $\G$ is source-connected, it follows Lemma \ref{lem:tau_lr} and Proposition \ref{prop:Lie_tau} that whenever $\tau_\pi^K$ is a tensor (i.e. $\pi^\sharp \circ K^* = K \circ \pi^\sharp)$, it is a multiplicative tensor and its vanishing is equivalent to \eqref{IM1}, \eqref{IM2} and \eqref{IM3} for $\D^\top$. The result now follows from noting that \eqref{IM2_alt} together with Lemma \ref{lem:IM4} implies that $\tau_\pi^K$ is a tensor and that $[\cdot,\cdot]_{\D^\top} = [\cdot, \cdot]_{l^*}$. This concludes the proof.
\end{proof}

By considering symplectic groupoids, we are able to recover a result of \cite{StX} concerning the integration of Poisson-Nijenhuis structures.
 
\begin{cor}
 Let $(M, \pi)$ be an integrable Poisson manifold and let $(\G, \omega) \toto M$ be the source 1-connected symplectic groupoid integrating  $\pi$. There is a 1-1 correspondence between Poisson-Nijenhuis structures $(\pi, r)$ on $M$ and multiplicative symplectic-Nijenhuis structures $(\omega, K)$ on $\G$, where $K: T\G \to T\G$ is the multiplicative endomorphism integrating 
 $r^{\rm ctg}: T(T^*M) \to T(T^*M)$.
\end{cor}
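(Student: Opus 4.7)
The plan is to chain together Theorem \ref{thm:pn_grpd}, Proposition \ref{prop:equiv_PN_LN}, and Theorem \ref{thm:lifts}, using the fact that a symplectic-Nijenhuis structure $(\omega,K)$ is nothing but a Poisson-Nijenhuis structure $(\omega^{-1},K)$ in which the Poisson tensor is non-degenerate. So the first step is to set up the bialgebroid picture: the source 1-connected symplectic groupoid $(\G,\omega)$ integrates $\pi$ in the sense that $\pi_\G := \omega^{-1}$ is multiplicative and its Lie bialgebroid is $(A,A^*)=(T^*M,TM)$, where $T^*M$ carries the cotangent Lie algebroid of $\pi$ and $TM$ carries the standard tangent Lie algebroid.

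Second, I would translate the data $(\omega,K)$ into bialgebroid/generalized-derivation language. Since $\omega$ is non-degenerate, the compatibility $K\circ \pi_\G^\sharp = \pi_\G^\sharp \circ K^*$ is equivalent to $\omega(K\cdot,\cdot)=\omega(\cdot,K\cdot)$, so multiplicative symplectic-Nijenhuis structures on $\G$ extending $\omega$ correspond bijectively to multiplicative Poisson-Nijenhuis structures $(\pi_\G,K)$. Applying Theorem \ref{thm:pn_grpd} to the source 1-connected groupoid $\G$ integrating $T^*M$, these in turn are in 1-1 correspondence with Lie-Nijenhuis bialgebroid structures $\D\in\GDer^1(T^*M)$ on $(T^*M,TM)$.

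Third, I would invoke the symmetry of the definition of Lie-Nijenhuis bialgebroids (noted right after the definition, using Theorem \ref{thm:duality}): such a $\D$ is equivalent to the datum $\D^\top\in\GDer^1(TM)$ which defines a Lie-Nijenhuis structure on $(TM,T^*M)$. By Proposition \ref{prop:equiv_PN_LN}, the latter are exactly the generalized derivations of the form $\D^{r,T}$ for $(\pi,r)$ a Poisson-Nijenhuis structure on $M$. Undoing the duality gives $\D=(\D^{r,T})^\top = \D^{r,T^*}$.

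Finally, Theorem \ref{thm:lifts} identifies the linear vector-valued $1$-form on $T^*M=A$ associated to $\D^{r,T^*}$ as the cotangent lift $r^{\rm ctg}:T(T^*M)\to T(T^*M)$. Under the correspondence recalled in \S\ref{subsec:IM_tensors}, this linear vector-valued form on $A$ is precisely the infinitesimal datum of the multiplicative endomorphism $K:T\G\to T\G$, so $K$ integrates $r^{\rm ctg}$ in the sense stated. The main (and only nontrivial) obstacle is the first equivalence, namely checking that under non-degeneracy of $\pi_\G=\omega^{-1}$ the symplectic-Nijenhuis condition is the same as the Poisson-Nijenhuis condition; this is a standard verification that the $\omega$-self-adjointness of $K$ matches $K\pi_\G^\sharp=\pi_\G^\sharp K^*$, after which everything else is a direct application of the results already established in the paper.
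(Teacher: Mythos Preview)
Your proposal is correct and follows essentially the same route as the paper's proof: pass from $(\omega,K)$ to $(\pi_\G,K)$, apply Theorem~\ref{thm:pn_grpd} with $A=T^*M$, use the duality of Lie-Nijenhuis structures together with Proposition~\ref{prop:equiv_PN_LN} to identify the resulting $\D$ with $\D^{r,T^*}$, and then invoke Theorem~\ref{thm:lifts} to recognize the associated linear form as $r^{\rm ctg}$. The paper's proof is just a terser version that leaves the duality step and the symplectic-Nijenhuis/Poisson-Nijenhuis translation implicit.
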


\begin{proof}
 By considering the Poisson structure $\pi_\G = \omega^{-1}$, it is well-known that $(\G, \pi_\G)$ is the Lie groupoid integrating the Lie bialgebroid $(TM, T^*M)$ coming from $\pi \in \frakx^2(M)$ (see \cite[Thm.~5.3]{Mac-Xu2}). Since Poisson-Nijenhuis structures $(\pi, r)$ are in 1-1 correspondence to Lie-Nijenhuis bialgebroid structures $D^{r,T} \in \GDer^1(TM)$ on $(TM, T^*M)$, the result now is an immediate application of Theorems \ref{thm:lifts} and \ref{thm:pn_grpd}. 
\end{proof} 
 
\begin{cor}\label{cor:das_result}
Let $(A, [\cdot, \cdot], \rho)$ be a Lie algebroid and $K \in \Omega^1(A, TA)$ be a linear vector-valued 1-form. The map $K: TA \to TA$ is a Lie algebroid morphism if and only if its dual $K^\top: T(A^*) \to T(A^*)$ is compatible with the linear Poisson structure $\pi_A \in \frakx^2(A^*)$.
\end{cor}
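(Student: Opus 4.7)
My plan is to reduce the statement to Theorem~\ref{thm:pn_grpd} by viewing $A^*\toto M$ as a Lie groupoid under fiberwise addition. This groupoid is source 1-connected, since its source-fibers are vector spaces, so Theorem~\ref{thm:pn_grpd} applies. Its Lie algebroid is $A^*$ with the trivial Lie algebroid structure (zero bracket, zero anchor), and by Remark~\ref{rem:lieD_from_grpd} the linear Poisson structure $\pi_A$ is multiplicative; the associated Lie bialgebroid is $(A^*_{\mathrm{triv}}, A)$, where $A$ appears as the dual Lie algebroid carrying its original structure.

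Next I would identify $K^\top$ with a multiplicative $(1,1)$-tensor on this groupoid. Since multiplicativity on vector-bundle-groupoids coincides with linearity and the formulas \eqref{dfn:Dlr_mult} specialize to \eqref{dfn:D_l_r} in this setting, $K^\top$ is automatically a multiplicative $(1,1)$-tensor and the IM $(1,1)$-tensor it induces on $A^*_{\mathrm{triv}}$ is precisely the generalized derivation $\D^\top \in \GDer^1(A^*)$ produced by the duality of \S 2. Because the anchor and bracket of $A^*_{\mathrm{triv}}$ both vanish, each of \eqref{IM1}--\eqref{IM4} for $\D^\top$ collapses to $0=0$, so $\D^\top$ is automatically an IM $(1,1)$-tensor on $A^*_{\mathrm{triv}}$.

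Now I would apply Theorem~\ref{thm:pn_grpd} to this situation: the compatibility of $\pi_A$ and $K^\top$ on the groupoid $A^*$ is equivalent to the dual $(\D^\top)^\top = \D$ being an IM $(1,1)$-tensor on the dual Lie algebroid $(A^*_{\mathrm{triv}})^* = A$ (with its original structure), the equality $(\D^\top)^\top = \D$ being granted by Theorem~\ref{thm:duality}. As recalled in \S\ref{subsec:IM_tensors}, $\D$ is an IM $(1,1)$-tensor on $A$ if and only if $K:TA\to TA$ is a Lie algebroid morphism. Chaining these three equivalences yields the corollary.

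The only bookkeeping point that deserves care, and where I expect the mild obstacle, is checking that, under the vector-bundle-groupoid specialization of \S\ref{subsec:IM_tensors}, the IM $(1,1)$-tensor of $K^\top$ really is $\D^\top$ from \S 2, rather than some variant with sign or transpose conventions shuffled. This is essentially the content of the remark comparing $\mathcal{T}$ with $\mathcal{V}$ right after \eqref{dfn:Dlr_mult}, so it amounts to unpacking notation rather than genuinely new calculation.
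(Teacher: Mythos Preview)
Your proposal is correct and follows exactly the paper's approach: the paper's proof is the one-line statement that the corollary ``follows immediately from Theorem~\ref{thm:pn_grpd} by considering $(A^*, \pi_A)$ as a Poisson groupoid (see also Remark~\ref{rem:lieD_from_grpd}),'' and your write-up is a careful unpacking of precisely that application. The bookkeeping point you flag (that the IM data of $K^\top$ on the vector-bundle groupoid is $\D^\top$) is indeed handled by the remark after \eqref{dfn:Dlr_mult} identifying $\mathcal{T}$ with $\mathcal{V}$ and \eqref{dfn:Dlr_mult} with \eqref{dfn:D_l_r}.
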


\begin{proof}
 This follows immediately from Theorem \ref{thm:pn_grpd} by considering $(A^*, \pi_A)$ as a Poisson groupoid (see also Remark \ref{rem:lieD_from_grpd}).
\end{proof}

\begin{rem}\em
In \cite{Das}, the author defines the notion of PN Lie bialgebroids as a Lie bialgebroid $(A, A^*)$ endowed with linear vector-valued 1-form $K: TA \to TA$ such that (1) $K$ is a morphism of Lie algebroids; (2) $K^*: T^*A \to T^*A$ is a also a morphism of Lie algebroids and (3) $(K, \pi_{A^*})$ is a Poisson-Nijenhuis pair, where $\pi_{A^*} \in \frakx^2(A)$ is the linear Poisson bivector corresponding to the Lie algebroid structure on $A^*$. We notice that (2) is a redundant condition since $K$ being a morphism of Lie algebroids implies that $K^*$ is also a morphism of algebroids. Hence, from Corollary \ref{cor:das_result}, it follows that PN Lie bialgebroids coincide with Lie-Nijenhuis bialgebroids and Theorem \ref{thm:pn_grpd} recovers the main result in \cite{Das}.
\end{rem}

\section{Holomorphic Lie-bialgebroids}\label{sec:hol}
In this section, we show how the formalism of generalized derivations is well suited to study holomorphic geometric structures (e.g. vector bundles, Poisson structures, Lie groupoids) in the realm of real differential geometry. In particular, we revisit the infinitesimal-global correspondence between holomorphic Poisson groupoids and holomorphic Lie bialgebroids \cite{LSX} from this perspective.

\subsection{Holomorphic vector bundles}
Let $\E \to M$ be a holomorphic vector bundle and consider the Dolbeault operator $\overline{\partial}:  \Gamma(\E) \to \Omega^{0,1}(M,\E)$. Denote by $q:E \to M$ the underlying real vector bundle and by $l: E \to E$ the endomorphism corresponding to the fiberwise multiplication by $i$. Also, define $D: \Gamma(E) \to \Omega^1(M, E)$ as 
$$
D_X(u) := \Psi^{-1}(i\, \overline{\partial}_{X+ir(X)}(\Psi(u))) = l( \Psi^{-1} \circ \overline{\partial}_{X+ir(X)}\circ \Psi(u))
$$
where $\Psi: E \to \E$ is the natural $\R$-linear isomorphism, $X \in \frakx(M)$ and $r:TM \to TM$ is the complex structure on $M$. It is straightforward to check that $\D^\E = (D,l,r) \in \GDer^1(E)$. We shall refer to $\D^\E$ as the \textit{Dolbeault generalized derivation} associated to $\E$. A section $u \in \Gamma(E)$ is said to be \textit{holomorphic} if $\Psi(u)$ is holomorphic. Note that $u$ is holomorphic if and only if $D(u) = 0$. 

As a real manifold, $E$ inherits a complex structure $J: TE \to TE$, which is linear since the multiplication by real scalars $h_\lambda$ is a holomorphic map. 

\begin{prop}
The Dolbeault generalized derivation is the generalized derivation $\D^J$ corresponding to $J$ via \eqref{dfn:D_l_r}.
\end{prop}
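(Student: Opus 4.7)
The strategy is to show $\D^\E = \D^J$ in $\GDer^1(E)$ in two steps: first, verify that both generalized derivations have the same symbol $(l,r)$; second, verify they take the same value on a local frame of holomorphic sections. Since two generalized derivations with identical symbol differ by a tensor in $\Omega^1(M, \End{E})$ (Lemma \ref{lem:ses}), this will suffice to conclude.

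\textbf{Identifying the symbol of $\D^J$.} Under the canonical identification $T^V_e E \cong E_{q(e)}$ of vertical vectors with fiber elements, the restriction of $J$ to vertical vectors is exactly the complex vector space structure on the fibers, which by definition is $l$. Consequently, for $u \in \Gamma(E)$,
\[
J(u^\uparrow) = (l\circ u)^\uparrow = \mathcal{V}(l(u)),
\]
which recovers the $l$-part of the symbol of $\D^J$ via \eqref{dfn:D_l_r}. Similarly, since $q: E \to M$ is a holomorphic map with respect to $J$ and $r$, we have $Tq \circ J = r \circ Tq$, giving $\<J, q^*\beta\> = q^*\<\beta, r\>$ for $\beta \in \Omega^1(M)$, which is the $r$-part. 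Hence $\sharp(\D^J) = (l, r) = \sharp(\D^\E)$.

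\textbf{Agreement on a holomorphic frame.} Pick a local holomorphic frame $\{e_a\}$ for $\E$; then $\{e_a, l(e_a)\}$ is a local real frame for $E$ whose elements all come from holomorphic sections. For any such holomorphic section $s$, $\overline{\partial}\Psi(s) = 0$, so $D^\E(s) = 0$ directly from the definition. For $D^J(s)$, consider the one-parameter family of fiberwise translations $\tau_{ts}: E \to E$, $e \mapsto e + ts(q(e))$. Each $\tau_{ts}$ is a biholomorphism, because fiberwise addition on $E$ and the section $s$ are both holomorphic. The infinitesimal generator of $(\tau_{ts})_{t\in \R}$ is $s^\uparrow$, and since its flow preserves the complex structure $J$, we obtain $\Lie_{s^\uparrow}J = 0$. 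Via \eqref{dfn:D_l_r}, this reads $\mathcal{V}(D^J(s)) = 0$, and injectivity of $\mathcal{V}$ gives $D^J(s) = 0$.

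\textbf{Conclusion and main obstacle.} Since $\D^J$ and $\D^\E$ have equal symbols, their difference lies in $\Omega^1(M, \End{E})$ by Lemma \ref{lem:ses}, hence is $C^\infty(M)$-linear in the section. As it vanishes on the frame $\{e_a, l(e_a)\}$, it vanishes on the trivializing neighborhood, and by gluing, globally. The conceptual crux is the identity $\Lie_{s^\uparrow}J = 0$ for a holomorphic section $s$; the argument via biholomorphic translations avoids an explicit local computation, but depends on the standard (easily verified) fact that translation by a holomorphic section is a biholomorphism of the complex manifold $E$.
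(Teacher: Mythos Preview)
Your proof is correct and takes a genuinely different route from the paper's. The paper works entirely in local holomorphic coordinates: it writes $J$ explicitly on the coordinate frame $(\partial/\partial x_k, \partial/\partial y_k, \partial/\partial\xi_j, \partial/\partial\eta_j)$ induced by a holomorphic trivialization, reads off the symbol $(l,r)$ from these formulas, computes directly that $D^J_{\partial/\partial x_k}(u_j)=D^J_{\partial/\partial x_k}(v_j)=0$ via Lie brackets of coordinate vector fields, and then expands $D^J$ on a general section $fu_j+gv_j$ by Leibniz to match the Dolbeault formula term by term. Your argument replaces these computations with two structural observations: the symbol is forced by the facts that $J$ restricts to fiberwise multiplication by $i$ and that $q$ is holomorphic, while the $D$-component is pinned down by noting that fiberwise translation by a holomorphic section is a biholomorphism, so $\Lie_{s^\uparrow}J=0$ and hence $D^J(s)=0=D^{\E}(s)$. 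The paper's approach has the virtue of being completely explicit and self-contained; yours is more conceptual and transportable (the same translation-by-holomorphic-section idea would work, for instance, in analogous statements for other linear geometric structures), at the small cost of invoking the injectivity of $\mathcal{V}$ and the fact that holomorphicity of the section implies holomorphicity of the translation map, both of which are straightforward but not spelled out in the paper.
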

\begin{proof}
Let $\{\sigma_1, \dots, \sigma_n\}$ be a local frame of holomorphic sections of $\E$ over an open set $U \subset M$ with holomorphic coordinates $(z_1, \dots, z_m): U \to \C^m$.  Define $u_j = \Psi^{-1}(\sigma_j)$, $v_j = \Psi^{-1}(i\,\sigma_j)$. It is straightforward to check that
\begin{align*}
J(\partial/\partial x_k) = \partial/\partial y_k ,& \,\,\,\,J(\partial/\partial y_k) = -\partial/\partial y_k\\
J(\partial/\partial \xi_j) = \partial/\partial \eta_j ,& \,\,\,\,J(\partial/\partial \eta_j) = -\partial/\partial \xi_j,
\end{align*}
where $z_k = x_k + i\,y_k$ and $(x_k,y_k,\xi_j, \eta_j)$ is the coordinate system on $q^{-1}(U) \subset E$ corresponding to the frame $\{u_j, v_j\}$. As $r(\partial/\partial x_k) = \partial/\partial y_k$, $u_j^\uparrow = \partial/\partial \xi_j,\, v_j^\uparrow = \partial/\partial \eta_j$ and $l(u_j)=v_j$, it follows that $(l,r)$ is the symbol of $J$.
Also,
\begin{align*}
D^J_{\frac{\partial}{\partial x_k}}(u_j)^\uparrow = [u_j^\uparrow, J(\frac{\partial}{\partial x_k})]- J([u_j^\uparrow, \frac{\partial}{\partial x_k}]) = [\frac{\partial}{\partial \xi_j}, \frac{\partial}{\partial y_k}] - J([\frac{\partial}{\partial \xi_j},  \frac{\partial}{\partial x_k}]) = 0.
\end{align*}
Similarly, one proves that $D^J_{\partial/\partial x_k}(v_j)=0$ . Hence, for $f, g \in C^\infty(M)$, by the Leibniz rule,
\begin{align*}
D_{\frac{\partial}{\partial x_k}}^J(f u_j +  g v_j) & = (\Lie_{\frac{\partial}{\partial x_k}}f) v_j -  (\Lie_{\frac{\partial}{\partial y_k}}f) u_j  -  (\Lie_{\frac{\partial}{\partial x_k}}f) u_j +  (\Lie_{\frac{\partial}{\partial y_k}}f) v_j\\
& = \Psi^{-1}\left(i\,\Lie_{\frac{\partial}{\partial x_k}+ i\frac{\partial}{\partial y_k}} (f+ig)\, \sigma_j\right)\\
& = \Psi^{-1}\left(
i \,\bar{\partial}^E_{\frac{\partial}{\partial x_k}+ i\frac{\partial}{\partial y_k}}(\Psi(fu_j+gv_j))\right)
\end{align*}
as we wanted to prove. 
\end{proof}

The duality for the Dolbeault generalized derivations also coincides with the duality of holomorphic vector bundles.

\begin{prop}\label{prop:dual_complex}
Let $\E^* \to M$ be the holomorphic dual of $\E$. If $\D^\E \in \GDer^1(E)$ is the Dolbeault generalized derivation of $\E$, then its dual is the Dolbeault generalized derivation of $\E^*$, i.e. $(\D^\E)^\top = \D^{\E^*}$. In particular, the complex structure on $E^*$ is $J^\top$.
\end{prop}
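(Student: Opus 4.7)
The plan is to show that $(\D^\E)^\top$ and $\D^{\E^*}$ have matching symbols and agree on a local holomorphic frame of $\E^*$. By Lemma \ref{lem:ses}, two generalized derivations of degree $1$ with the same symbol that agree on a local frame must coincide (their difference is a tensor vanishing on a frame), so this will force them to be equal.

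For the symbols, the $r$-components agree by \eqref{eq:dual_D}, both being the complex structure on $M$. For the $l$-components, I use the real pairing $\<\mu,u\> = \mathrm{Re}(\Psi(\mu)(\Psi(u)))$ to identify $E^*$ with the real bundle underlying $\E^*$. Under this identification, a direct fibrewise computation shows that $l^\top$ (the transpose of fibrewise multiplication by $i$ on $E$) equals fibrewise multiplication by $i$ on $\E^*$: if $\mu \in E^*$ corresponds to $\tilde\mu \in \E^*$, then $(l^\top\mu)(u) = \mu(iu) = \mathrm{Re}(i\tilde\mu(u))$, which is exactly the image of $\tilde\mu$ under multiplication by $i$, re-expressed as an element of $E^*$.

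For the $D$-components, pick a local holomorphic frame $\{\sigma_j\}$ of $\E$ with dual holomorphic frame $\{\sigma^j\}$ of $\E^*$, and set $u_j = \Psi^{-1}(\sigma_j)$, $v_j = \Psi^{-1}(i\sigma_j)$, $\mu_j = \Psi^{-1}(\sigma^j)$. The four pairings $\<\mu_j,u_k\>$, $\<\mu_j,v_k\>$, $\<\mu_j,l(u_k)\>$, $\<\mu_j,l(v_k)\>$ are all locally constant on $M$ (they equal $\delta_{jk}, 0, 0, -\delta_{jk}$ respectively), and holomorphicity of the $\sigma_j$ gives $D^\E_X(u_k) = D^\E_X(v_k) = 0$. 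Plugging these into \eqref{eqn:1_duality} annihilates every term on the right-hand side, so $(\D^\E)^\top(\mu_j) = 0 = \D^{\E^*}(\mu_j)$. This furnishes the required agreement on a local frame, completing the proof of $(\D^\E)^\top = \D^{\E^*}$.

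The final assertion about the complex structure on $E^*$ then follows immediately: by the preceding proposition applied to $\E^*$, the linear vector-valued form on $E^*$ associated to $\D^{\E^*}$ is the complex structure of $E^*$, while by the definition given in \S\ref{sec:lin_vec_val} the linear vector-valued form associated to $(\D^\E)^\top$ is $J^\top$. The only genuine subtlety is bookkeeping the three notions of duality in play — the real dual of $E$, the $\C$-linear dual of $\E$, and the real-part pairing identifying them — but once the conventions are pinned down the computation reduces to the elementary verification above.
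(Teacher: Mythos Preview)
Your argument is essentially correct and takes a somewhat different route from the paper. The paper computes directly: it writes down the Dolbeault operator on $\E^*$ via the dual-connection formula, transports it through the explicit identification $\Phi: E^* \to \E^*$, and then recognises the resulting expression as the right-hand side of \eqref{eqn:1_duality}. Your approach instead fixes a local holomorphic frame and checks that both $D^\top$ and $D^{\E^*}$ vanish on it, reducing the question to a tensorial one via Lemma~\ref{lem:ses}. Your method is cleaner combinatorially and avoids manipulating the Dolbeault formula globally; the paper's method has the advantage of never choosing a frame and making the identity visible as an equality of operators.

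One small gap: the sections $\{\mu_j\}$ you produce are only a complex frame for $\E^*$, hence only half of a real frame for $E^*$. To conclude that the tensor $D^\top - D^{\E^*}$ vanishes you must also check it on $\nu_j := l^\top(\mu_j)$ (corresponding to $i\sigma^j$). This is immediate by the same computation---the pairings $\langle \nu_j, u_k\rangle$, $\langle \nu_j, v_k\rangle$, $\langle \nu_j, l(u_k)\rangle$, $\langle \nu_j, l(v_k)\rangle$ are again locally constant (equal to $0,\,-\delta_{jk},\,\delta_{jk},\,0$) and $i\sigma^j$ is holomorphic---so the omission is harmless, but you should state it. Also, a notational slip: your $\mu_j$ should be $\Phi^{-1}(\sigma^j)$ rather than $\Psi^{-1}(\sigma^j)$, since $\Psi$ identifies $E$ with $\E$, not $E^*$ with $\E^*$.
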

\begin{proof}
For $x \in M$, $\E^*_x = \mathrm{Hom}_{\C}(\E_x, \C)$. The Dolbeault operator $\bar{\partial}^{E^*}: \Gamma(\E^*) \to \Omega^{0,1}(M, \E^*)$ on $\E^*$ is given by 
 $$
 \bar{\partial}^{E^*}_{X+ir(X)}(\varphi) (\sigma) = \Lie_{X+ir(X)}(\varphi(\sigma)) - \varphi(\bar{\partial}^E_{X+ir(X)}(\sigma)) 
 $$
 where $\sigma \in \Gamma(M, \E),\, \varphi \in \Gamma(M, \E^*),\, X \in \frakx(M)$ and $r: TM \to TM$ is the complex structure on $M$. The real vector bundle underlying $\E^*$ is naturally identified with $E^*$ as follows: define $\Phi: E^* \to \E^*$ as
 $$
 \Phi(\mu)(\sigma) = \<\mu, \Psi^{-1}(\sigma)\> - i \,\<\mu, l(\Psi^{-1}(\sigma))\>,
 $$
 where $\Psi: E \to \E$ is the $\R$-linear isomorphism corresponding to the underlying real vector bundle of $\E$. Its inverse is $\Phi^{-1}(\varphi) = \mathrm{Re}(\varphi \circ \Psi)$. So, the Dolbeault generalized derivation on $E^*$ is  given by
\begin{align*}
\<\Phi^{-1}(i\,\bar{\partial}^{E^*}_{X+ir(X)}(\Phi(\mu))), u \>&  = \mathrm{Re}\left( i\,\bar{\partial}^{E^*}_{X+ir(X)}(\Phi(\mu))(\Psi(u))\right)\\
& = -\mathrm{Im}\left(\Lie_{X+ir(X)}(\<\mu, u\>-i\<\mu, l(u)\>)\right)\\ 
& \hspace{-70pt}+ \mathrm{Im}\left(\<\mu, \Psi^{-1}\circ\bar{\partial}^{E}_{X+ir(X)}\circ\Psi(u)\> - i\<\mu, l(\Psi^{-1} \circ \bar{\partial}^{E}_{X+ir(X)}\circ\Psi(u))\>\right)\\
 & = \Lie_X\<\mu, l(u)\> - \Lie_{r(X)}\<\mu,u\> - \<\mu, D_X(u)\>\\
 & = \<D^\top_X(\mu),u\>.
\end{align*}
\end{proof}

Given a complex manifold $M$, consider its holomorphic tangent bundle $T^{1,0}M$. Our next result describes the corresponding linear complex structure on $TM$

\begin{prop}
 Let $r: TM \to TM$ be the complex structure of $M$. The linear complex structure on $TM$ (resp. $T^*M$) corresponding to the holomorphic vector bundle $T^{1,0}M$ is the tangent lift $r^{\rm tg}: T(TM) \to T(TM)$ (resp. cotangent lift $r^{\rm ctg}: T(T^*M) \to T(T^*M)$).
\end{prop}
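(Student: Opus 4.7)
The plan is to reduce the statement to identifying the Dolbeault generalized derivation $\D^{T^{1,0}M}$ with $\D^{r,T}$, and then to apply Theorem \ref{thm:lifts}; the cotangent case will then follow by duality.

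\textbf{Setup.} The $\R$-linear isomorphism is $\Psi: TM \to T^{1,0}M$, $\Psi(X) = \tfrac{1}{2}(X - ir(X))$. Since $r^2 = -\mathrm{id}$, one has $\Psi(r(X)) = i\,\Psi(X)$, so the fiberwise endomorphism $l: TM \to TM$ associated (via $\Psi$) to multiplication by $i$ on $T^{1,0}M$ coincides with $r$. Because the complex structure of the base $M$ is also $r$ by hypothesis, the symbol of $\D^{T^{1,0}M} \in \GDer^1(TM)$ is $(r,r)$. By Example~\ref{exam:tang_D}, the symbol of $\D^{r,T}$ is $(\widetilde{r}, r)$, and for $k=1$ we have $\widetilde{r} = r$. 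Hence both generalized derivations share the symbol $(r,r)$.

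\textbf{Matching the $D$-components.} Since $\D^{T^{1,0}M}$ and $\D^{r,T}$ have the same symbol, the difference of their $D$-components is $C^{\infty}(M)$-linear, hence a tensor in $\Omega^1(M,\End(TM))$. It therefore suffices to check that the two operators agree on a local frame. In a holomorphic chart $(z_1,\dots,z_n)$ with $z_k = x_k + iy_k$, one computes $\Psi(\partial/\partial x_k) = \partial/\partial z_k$ and $\Psi(\partial/\partial y_k) = i\,\partial/\partial z_k$, which are holomorphic sections of $T^{1,0}M$; consequently $D^{T^{1,0}M}(\partial/\partial x_k) = D^{T^{1,0}M}(\partial/\partial y_k) = 0$. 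On the other hand, in these same real coordinates $r$ has constant coefficients, so $D^{r,T}(\partial/\partial x_k) = [\partial/\partial x_k, r] = \Lie_{\partial/\partial x_k} r = 0$, and likewise for $\partial/\partial y_k$. Thus $\D^{T^{1,0}M} = \D^{r,T}$, and Theorem~\ref{thm:lifts} identifies the associated linear vector-valued form, namely the complex structure $J$ on $TM$, with $r^{\rm tg}$.

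\textbf{Cotangent case.} By Proposition~\ref{prop:dual_complex}, the Dolbeault generalized derivation on the real bundle underlying $(T^{1,0}M)^* \cong T^{*1,0}M$ is $(\D^{T^{1,0}M})^\top = (\D^{r,T})^\top = \D^{r,T^*}$. By Theorem~\ref{thm:lifts} again, the corresponding linear vector-valued form on $T^*M$ is $r^{\rm ctg}$, which is precisely the linear complex structure on $T^*M$ given by Proposition~\ref{prop:dual_complex}.

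\textbf{Main obstacle.} The substantive step is the matching in Step~2, i.e.\ recognizing that the coordinate frame $\{\partial/\partial x_k,\partial/\partial y_k\}$ lies simultaneously in the kernel of both $D$-operators. A more invariant way to phrase this, which may be used as a sanity check, is the observation that a real vector field $X$ on $M$ lifts via $\Psi$ to a holomorphic section of $T^{1,0}M$ if and only if $\Lie_X r = 0$, which is exactly the condition $D^{r,T}(X)=0$; the coordinate vector fields above provide such a local span. Apart from this, everything is a direct application of the duality and lift correspondences already established in the paper.
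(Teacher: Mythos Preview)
Your argument is correct and follows the same overall strategy as the paper: identify the Dolbeault generalized derivation of $T^{1,0}M$ with $\D^{r,T}$, invoke Theorem~\ref{thm:lifts} for the tangent case, and then obtain the cotangent case via Proposition~\ref{prop:dual_complex}. The only difference is in how the identification $\D^{T^{1,0}M}=\D^{r,T}$ is carried out: the paper computes the Dolbeault operator invariantly as $\bar\partial_{Y+ir(Y)}(X-ir(X))=\mathrm{pr}^{1,0}[Y+ir(Y),X-ir(X)]$ and recognizes $D^{r,T}$ directly in the result, whereas you first match the symbols $(r,r)$ and then check vanishing of both $D$-components on the holomorphic coordinate frame $\{\partial/\partial x_k,\partial/\partial y_k\}$; both verifications are equally valid and of comparable length.
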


\begin{proof}
 The Dolbeault operator $\bar{\partial}: \Gamma(T^{1,0}) \to \Gamma((T^{0,1})^* \otimes T^{1,0})$ is simply
$$
\bar{\partial}_{Y+ir(Y)}(X-ir(X)) = \mathrm{pr}^{1,0}[Y+i\,r(Y), X-i\,r(X)] = Z - ir(Z),
$$
where
\begin{align*}
Z
= r([r(Y),X] - r([Y,X])) = -r(D^{r,T}_Y(X))
\end{align*}
Hence, using the isomorphism $\Psi: TM \ni X \mapsto X - i\,r(X) \in T^{1,0}$, one sees that
$$
D^{r,T}_Y(X) = i \Psi^{-1}(\bar{\partial}_{Y+i\,r(Y)}(\Psi(X))).
$$
This shows that $r^{\mathrm{tg}}: T(TM) \to T(TM)$ is the complex structure on the total space of $TM$ corresponding to the holomorphic structure on $TM$. The result regarding the cotangent bundle follows directly from Proposition \ref{prop:dual_complex} and Theorem \ref{thm:lifts}.
\end{proof}

\subsection{Integration}
\medskip \subsubsection{Presentation as Lie-Nijenhuis bialgebroids}
A holomorphic Lie algebroid is a holomorphic vector bundle $\mathcal{A} \to M$ endowed with a holomorphic bundle map $\mathcal{P}: \mathcal{A} \to T^{1,0}M$ and a complex Lie algebra structure on the sheaf of holomorphic sections $\Gamma_{\rm hol}(\cdot, \mathcal{A})$ such that $\mathcal{P}$ induces a morphism of sheaves of complex Lie algebras from $\Gamma_{\rm hol}(\cdot, \mathcal{A})$ to $\Gamma_{\rm hol}(\cdot, T^{1,0}M)$ and  
$$
[\sigma_1, f \sigma_2] = f [\sigma_1, \sigma_2] + (\Lie_{\mathcal{P}(\sigma_1)}f) \sigma_2, \,\, \sigma_1, \, \sigma_2 \in \Gamma_{\rm hol}(U, \mathcal{A}), \, f \in \mathcal{O}(U),
$$
where $\mathcal{O}$ is the sheaf of holomorphic functions on $M$ and $U \subset M$ is a open set and $\Gamma_{\rm hol}$ refers to the sheaf of holomorphic sections.

For a holomorphic Lie algebroid $\mathcal{A} \to M$, consider the exterior algebra bundles $\wedge^\bullet_\C \mathcal{A}$, $\wedge^\bullet_\C \mathcal{A}^*$ (in the following, we shall drop the $\C$ subscript).  It is clear that, similar to the real case, one can define both a Chevalley-Eilenberg differential $d_\mathcal{A}: \Gamma_{\rm hol}(U, \wedge^\bullet \mathcal{A}^*) \to \Gamma_{\rm hol}(U, \wedge^{\bullet+1}\mathcal{A}^*)$ and a Schouten bracket 
$$
[\cdot,\cdot]_{\mathcal{A}}:\Gamma_{\rm hol}(U, \wedge^i \mathcal{A}) \times \Gamma_{\rm hol}(U, \wedge^j \mathcal{A}) \to \Gamma_{\rm hol}(U, \wedge^{i+j-1} \mathcal{A}).
$$

\begin{dfn}\em
A holomorphic Lie bialgebroid is a pair of holomorphic Lie algebroids in duality $\mathcal{A} \to M$, $\mathcal{A}^* \to M$ such that given holomorphic sections $\sigma_1, \sigma_2 \in \Gamma_{\rm hol}(U, \mathcal{A})$, one has that
\begin{equation}\label{eqn:hol_bialg}
d_{\mathcal{A}^*}[\sigma_1, \sigma_2]_{\mathcal{A}} = [d_{\mathcal{A}^*}\sigma_1, \sigma_2]_{\mathcal{A}} + [d_{\mathcal{A}^*}\sigma_1, \sigma_2]_{\mathcal{A}}.
\end{equation}
\end{dfn}

A holomorphic Lie algebroid structure on $\mathcal{A}$ determines a uniquely Lie algebroid structure $([\cdot,\cdot], \rho)$ on the underlying real vector bundle $A$ such that $\rho \circ l = r \circ \rho$ and $[\cdot, \cdot]$ restricts to a $\C$-linear bracket on the holomorphic sections of $A$ (see \cite[Prop.~3.3]{LMX2}). We shall refer to $A$ as the underlying real Lie algebroid of $\mathcal{A}$. 

\begin{rem}\label{rem:hol_alg_J}\em
Given an holomorphic vector bundle $\mathcal{A} \to M$ such that its underlying real vector bundle $A \to M$ has a Lie algebroid structure $([\cdot, \cdot], \rho)$, it is well known that there exists a holomorphic Lie algebroid structure on $\mathcal{A} \to M$ having $A \to M$ as the underlying real Lie algebroid if and only if the linear complex structure $J: TA \to TA$ is a Lie algebroid morphism (see \cite[Prop.~2.3]{LMX}).
\end{rem}

\begin{rem}\label{rem:hol_bialg_real}\em
Given two holomorphic Lie algebroids $\mathcal{A}, \, \mathcal{A}^*$ dual to each other, it can be proved that $(\mathcal{A}, \, \mathcal{A}^*)$ is a holomorphic Lie bialgebroid if and only if the underlying real Lie algebroids $(A, A^*)$ form also a real Lie bialgebroid  (see \cite[Thm.~4.10]{LSX}).
\end{rem}

We can summarize the remarks above about the properties inherited by the underlying real Lie bialgebroid of a holomorphic Lie bialgebroid using the language of Lie-Nijenhuis bialgebroids as follows: 

\begin{prop}\label{prop:hol_bialgbds}
 A Lie bialgebroid $(A, A^*)$ is the underlying real Lie bialgebroid of a holomorphic Lie bialgebroid $(\mathcal{A}, \mathcal{A}^*)$ if and only if there exists $\D=(D,l,r) \in \GDer^1(A)$ for which $(A, A^*, \D)$ is a Lie-Nijenhuis bialgebroid and
 \begin{equation}\label{eqn:alg_complex}
 r^2 = -\mathrm{id}_{TM}, \, l^2= - \mathrm{id}_E, \, D_{r(X)}(u) + l(D_X(u)) = 0,
 \end{equation}
 for $u \in \Gamma(E), \, X \in \mathfrak{X}(M)$. In this case, $\D = \D^\mathcal{A}$ is the Dolbeault generalized derivation corresponding to $\mathcal{A}$.
\end{prop}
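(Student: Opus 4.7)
The plan is to combine the several dictionaries already established in the paper, namely the correspondence $\GDer^\bullet(A)\leftrightarrow \Omega^\bullet_{\rm lin}(A,TA)$ (Proposition \ref{prop:graded_lie_corresp}), the IM-characterization of Lie algebroid morphisms $K\colon TA\to TA$ (\S\ref{sec:der_on_algbds} and \S\ref{subsec:IM_tensors}), the duality $\D\mapsto \D^\top$ and its geometric counterpart $J\mapsto J^\top$ on the total space (Proposition \ref{prop:dual_complex}), and Remarks \ref{rem:hol_alg_J} and \ref{rem:hol_bialg_real}. The underlying idea is to read ``$\D^\mathcal{A}$'' as the unique object that records the linear almost complex structure $J\colon TA\to TA$ on the total space of $\mathcal{A}$ and then interpret each condition on $\D$ geometrically.

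The main technical step is the pointwise reformulation of ``$J$ is an almost complex structure''. Let $\D=(D,l,r)\in\GDer^1(A)$ correspond to the linear vector-valued $1$-form $K\in\Omega^1_{\rm lin}(A,TA)$ via \eqref{dfn:D_l_r}. Viewing $K$ as a bundle endomorphism $K\colon TA\to TA$, I would show that $K^{2}=-\id_{TA}$ is equivalent to \eqref{eqn:alg_complex}. Testing on vertical lifts yields $K^2(u^\uparrow)=(l^2(u))^\uparrow$, forcing $l^2=-\id_E$. For a linear vector field $U$ with symbol $X$ and derivation $\Delta$, Proposition \ref{prop:lin_char} gives that $K(U)$ is linear with symbol $r(X)$ and associated derivation $u\mapsto l(\Delta(u))-D_X(u)$; iterating,
\begin{equation*}
\Delta_{K^2(U)}(u)=l^2(\Delta(u))-l(D_X(u))-D_{r(X)}(u),\qquad \sharp(K^2(U))=r^2(X).
\end{equation*}
By Lemma \ref{lem:vec_on_lin}, $K^2=-\id_{TA}$ is thus equivalent to $r^2=-\id_{TM}$, $l^2=-\id_E$ and $D_{r(X)}(u)+l(D_X(u))=0$. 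For integrability I would use that $\D\mapsto K$ is a graded Lie algebra isomorphism (Proposition \ref{prop:graded_lie_corresp}), so that $[\D,\D]=0$ if and only if the Fr\"olicher--Nijenhuis bracket $[K,K]$ vanishes, i.e.\ $N_K=0$.

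Putting this together gives the proof of the forward direction as follows. Start with a holomorphic Lie bialgebroid $(\mathcal{A},\mathcal{A}^*)$ and let $\D=\D^\mathcal{A}$ be its Dolbeault generalized derivation, which by the discussion of \S\,5.1 is the one corresponding to the linear complex structure $J\colon TA\to TA$ on the total space. Because $J$ is an almost complex structure, Step 1 yields \eqref{eqn:alg_complex}; because it is integrable (Newlander--Nirenberg), $[\D,\D]=0$. By Remark \ref{rem:hol_alg_J}, $J$ is a Lie algebroid morphism for the tangent Lie algebroid $TA\to TM$, so $\D$ is an IM $(1,1)$-tensor on $A$. Proposition \ref{prop:dual_complex} identifies the complex structure on $A^*$ with $J^\top$, whose corresponding generalized derivation is $\D^\top$; the same argument applied to $\mathcal{A}^*$ shows that $\D^\top$ is an IM $(1,1)$-tensor on $A^*$. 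Finally, Remark \ref{rem:hol_bialg_real} ensures that $(A,A^*)$ is a real Lie bialgebroid, so $(A,A^*,\D)$ is Lie--Nijenhuis and satisfies \eqref{eqn:alg_complex}.

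Conversely, given $\D$ satisfying all the listed conditions, let $K\in\Omega^1_{\rm lin}(A,TA)$ be the corresponding linear vector-valued $1$-form. Step 1 and the vanishing of $[\D,\D]$ imply $K^2=-\id_{TA}$ and $N_K=0$, so $K$ is an integrable almost complex structure on the total space of $A$ that is linear with respect to the vector bundle structure (its vertical part is fiberwise multiplication by $l$ with $l^2=-\id_E$, and it covers the complex structure $r$ on $M$). Hence $A$ inherits a holomorphic vector bundle structure $\mathcal{A}$ whose associated linear complex structure is $K$, which forces $\D=\D^\mathcal{A}$ by uniqueness in the dictionary \eqref{dfn:D_l_r}. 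The assumption that $\D$ is an IM $(1,1)$-tensor and Remark \ref{rem:hol_alg_J} make $\mathcal{A}$ a holomorphic Lie algebroid; applying the same argument to $\D^\top$ on $A^*$ produces a holomorphic Lie algebroid $\mathcal{A}^*$ whose underlying real Lie algebroid is $A^*$. The hypothesis that $(A,A^*)$ is a real Lie bialgebroid combined with Remark \ref{rem:hol_bialg_real} then promotes $(\mathcal{A},\mathcal{A}^*)$ to a holomorphic Lie bialgebroid, completing the proof. The only genuinely new calculation is Step 1; everything else is a careful bookkeeping exercise using already-proven results.
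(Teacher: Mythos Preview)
Your proposal is correct and follows essentially the same approach as the paper's proof: both reduce the statement to the equivalence between \eqref{eqn:alg_complex} and $J^2=-\id_{TA}$ for the linear $(1,1)$-form $J$ corresponding to $\D$, and then invoke Propositions \ref{prop:graded_lie_corresp}, \ref{prop:dual_complex} and Remarks \ref{rem:hol_alg_J}, \ref{rem:hol_bialg_real}. The only difference is that the paper outsources the equivalence $\eqref{eqn:alg_complex}\Leftrightarrow J^2=-\id_{TA}$ to \cite[Cor.~6.2]{BD}, whereas you compute it directly using Proposition \ref{prop:lin_char} and Lemma \ref{lem:vec_on_lin}; your computation is correct and makes the argument self-contained.
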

%

\begin{proof}
It is known that the existence of a generalized derivation $\D \in \GDer^1(A)$ satisfying equations \eqref{eqn:alg_complex} is equivalent via the correspondence \eqref{dfn:D_l_r} to a linear vector valued form $J: TA \to TA$ satisfying $J^2 = - \mathrm{id}_{TA}$ (see \cite[Cor.~6.2]{BD}). The result now follows from Propositions \ref{prop:graded_lie_corresp}, \ref{prop:dual_complex} and Remarks \ref{rem:hol_alg_J} and \ref{rem:hol_bialg_real}.
\end{proof}

\medskip \subsubsection{Integration to Holomorphic Poisson groupoids}
Let us briefly recall the relationship between holomorphic Poisson structures and PN geometry. Let $(M, r)$ be a complex manifold and consider $\pi = \pi^0 - i \pi^1 \in \Gamma(\wedge^2 TM \otimes \C)$. It is known that
\begin{itemize}
 \item[(i)] $\pi \in \Gamma(\wedge^{2,0} TM)$ if and only if $\pi^1 =  \pi^0_r$;
 \item[(ii)] $\pi$ is a holomorphic (i.e. $\overline{\partial}\pi = 0$) if and only if $\pi^0$ and $r$ are compatible;
 \item[(iii)] $[\pi, \pi]=0$ if and only if $[\pi^0, \pi^0] =0$
\end{itemize}
(we refer to \cite{LMX2} for details). In the case (i), (ii) and (iii) are satisfied, we say that $(M, r, \pi)$ is a holomorphic Poisson manifold. We are now able to give an alternative proof of the result originally proved in \cite{LSX} (see Theorem 4.17 therein).

\begin{thm}\cite{LSX}\label{thm:holomorphic}
 Given a holomorphic Lie bialgebroid $(\mathcal{A}, \mathcal{A}^*)$, let $(A, A^*)$ be the underlying real Lie bialgebroid and consider $J_A: TA \to TA$ the linear complex structure on $A$. If $A$ is integrable and $\G \toto M$ is its source 1-connected groupoid, then $(\G, J, \pi)$ is a holomorphic Poisson manifold, where $J$ is the multiplicative complex structure integrating $J_A$ and $\pi = \pi^0 - i \pi^0_J$ is a holomorphic Poisson structure, where $\pi^0$ is the multiplicative Poisson structure integrating the Lie bialgebroid $(A, A^*)$.
\end{thm}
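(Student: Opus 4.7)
The strategy is to reduce the statement to Theorem \ref{thm:pn_grpd} by viewing the holomorphic Lie bialgebroid as a Lie--Nijenhuis bialgebroid satisfying the almost complex conditions \eqref{eqn:alg_complex}, and then to assemble the resulting multiplicative PN datum on $\G$ into a holomorphic Poisson structure via the classical dictionary (i)--(iii) recalled at the beginning of this subsection.

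First, I would invoke Proposition \ref{prop:hol_bialgbds} to produce a generalized derivation $\D = \D^{\mathcal{A}} = (D,l,r) \in \GDer^1(A)$ making $(A, A^*, \D)$ a Lie--Nijenhuis bialgebroid and satisfying $l^2=-\id_A$, $r^2=-\id_{TM}$, $D_{r(X)}+l\circ D_X=0$. Under the correspondence of \S \ref{sec:lin_vec_val}, $\D$ corresponds to a linear vector-valued 1-form on $A$, which is precisely the given $J_A \in \Omega^1_{\rm lin}(A, TA)$. By Proposition \ref{prop:graded_lie_corresp} the composition $J_A \circ J_A = -\id_{TA}$ and the condition that $J_A$ be a Lie algebroid morphism $TA \to TA$ are both encoded in $\D$.

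Next, I would apply Theorem \ref{thm:pn_grpd} using the source 1-connected integration $\G \toto M$ of $A$: the Lie--Nijenhuis bialgebroid $(A, A^*, \D)$ integrates to a multiplicative Poisson--Nijenhuis pair $(\pi^0, K)$ on $\G$, where $\pi^0$ is the multiplicative Poisson structure integrating $(A, A^*)$ and $K \in \Omega^1(\G, T\G)$ is the multiplicative $(1,1)$-tensor whose IM $(1,1)$-tensor is $\D$. Set $J := K$. To see $J$ is a complex structure, I would argue as follows. The tensors $J\circ J$ and $-\id_{T\G}$ are both multiplicative $(1,1)$-tensors, and their associated IM $(1,1)$-tensors correspond to the Lie algebroid morphisms $J_A\circ J_A$ and $-\id_{TA}$ on the tangent Lie algebroid $TA$; by hypothesis these coincide, so by uniqueness of the integration of IM $(1,1)$-tensors on a source 1-connected groupoid (see \S \ref{subsec:IM_tensors}), $J^2 = -\id_{T\G}$. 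Likewise, the Nijenhuis torsion $N_J$ is a multiplicative $(1,2)$-tensor with IM datum $\tfrac{1}{2}[\D, \D] = 0$ (cf. \S \ref{subsec:IM_tensors}), and source-connectedness forces $N_J \equiv 0$. Hence $J$ is an integrable complex structure and $(\G, J)$ is a complex manifold.

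Finally I would invoke the characterisation (i)--(iii) to see that $\pi := \pi^0 - i\,\pi^0_J$ is a holomorphic Poisson structure: (i) gives $\pi \in \Gamma(\wedge^{2,0} T\G \otimes \C)$ because $\pi^{0\sharp}_J = J\circ \pi^{0\sharp}$; (ii) gives that $\pi$ is holomorphic because $\pi^0$ and $J$ are compatible by the PN condition coming from Theorem \ref{thm:pn_grpd}; (iii) gives $[\pi, \pi]=0$ because $\pi^0$ is Poisson. Multiplicativity of $\pi$ follows from that of $\pi^0$ and $J$. The main difficulty I foresee lies in the uniqueness-of-integration arguments justifying $J^2 = -\id_{T\G}$ and $N_J = 0$, which rest on the careful identification, in \S \ref{subsec:IM_tensors}, of the IM data of the multiplicative tensors obtained by composing and by taking Nijenhuis torsions of given multiplicative $(1,1)$-tensors; once this is in place, the remaining verifications reduce to applying the criteria (i)--(iii) termwise.
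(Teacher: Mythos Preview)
Your proposal is correct and follows essentially the same route as the paper: invoke Proposition \ref{prop:hol_bialgbds} to realise $(A,A^*,\D^{\mathcal A})$ as a Lie--Nijenhuis bialgebroid with $\D^{\mathcal A}$ satisfying \eqref{eqn:alg_complex}, integrate via Theorem \ref{thm:pn_grpd} to the multiplicative pair $(\pi^0,J)$, use uniqueness of integration of IM tensors to get $J^2=-\id_{T\G}$ and $N_J=0$, and conclude with the dictionary (i)--(iii). The paper's proof is terser but relies on exactly the same ingredients in the same order.
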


\begin{proof}
 Since $J_A: TA \to TA$ is a Lie algebroid morphism, it can be integrated to a multiplicative morphism $J: T\G \to T\G$. Also, as both $J$ and $J_A$ correspond to the same IM (1,1) tensor on $A$ (the Dolbeault generalized derivation $\D^\mathcal{A}$), it follows that $J^2 = -\mathrm{id}_{T\G}$ and $[J,J]=0$. Now, as $J_A^\top: T(A^*) \to T(A^*)$ is the complex structure corresponding to the holomorphic vector bundle $\mathcal{A}^*$, it is also a Lie algebroid morphism. So, Theorem \ref{thm:pn_grpd} implies that $\pi^0$ and $J$ are compatible giving the holomorphic Poisson structure $\pi = \pi^0 - i \pi^0_J$ on $(\G, J)$.
\end{proof}


\begin{thebibliography}{99}
\bibitem{AN}
Alekseev, A.; Naef, F.: Goldman-Turaev formality from the Knizhnik-Zamolodchikov connection, {\em Preprint  	arXiv:1708.03119.}

\bibitem{Bo}
Bonechi, F.: Multiplicative integrable models from Poisson-Nijenhuis structures, {\em Banach Cent. Publ.} {\bf 106} (2015), 19-33.


\bibitem{BP}
Broka, D.;  Xu, P.: Symplectic realizations of holomorphic Poisson manifolds, {\em Preprint 
arXiv:1512.08847.} (2015).

\bibitem{BD2}
Bursztyn, H.; Drummond, T.: Lie groupoids and the Fr\"olicher-Nijenhuis bracket, {\em Bull. Braz. Math. Soc.} {\bf 44} (2013), 709-730.


\bibitem{BD}
Bursztyn, H.; Drummond, T.: Lie theory of multiplicative tensors, {\em Math. Ann.} {\bf 375} (2019), 1489-1554.

\bibitem{BDN}
Bursztyn, H.; Drummond, T.; Netto, C.: In preparation.


\bibitem{BCO}
Bursztyn, H., Cabrera, A., Ortiz, C.: Linear and multiplicative 2-forms, {\em Lett. Math. Phys.} {\bf 90} (2009), 59-83.

\bibitem{Das}
Das, A.: Poisson-Nijenhuis groupoids, {\em preprint arXiv:1709.08168} (2017).

\bibitem{Dru}
Drummond, T.: In preparation.

\bibitem{FN}
Fr\"olicher, A., Nijenhuis, A.: Theory of vector valued differential
forms. Part I. {\em Indag. Math.} {\bf 18} (1956), 338-359.

\bibitem{GU}
Grabowski, J., Urbanski, P.: Tangent lift of Poisson and related structures, {\em J. Phys. A} {\bf 28} (1995), 6743-6777. 

\bibitem{GU2}
Grabowski, J., Urbanski, P.: Lie algebroids and Poisson-Nijenhuis structures, {\em Rep. Math. Phys.} {\bf 40} (1997), 195-208. 

\bibitem{GU3}
 Grabowski, J., Urbanski, P.: Tangent and cotangent lifts and graded Lie algebras associated with Lie algebroids, {\em Ann. Global Anal. Geom.} {\bf 15} (1997), 447-486.

\bibitem{ILX}
Iglesias Ponte, D., Laurent-Gengoux, C., Xu, P.: Universal lifting
theorem and quasi-Poisson groupoids, {\em J. Eur. Math. Soc.} {\bf 14} (2012), 681--731.

\bibitem{Kar}
Karasev, M.:  Analogues  of  objects  of  the  theory  of  Lie  groups  for  nonlinear
Poisson brackets, {\em USSR Izv.} {\bf 28} (1987), 497--527.


\bibitem{nat}
Kol\'ar, I., Michor, P., Slov\'ak, J.: Natural operations in differential geometry, {\em Springer-Verlag, Berlin Heidelberg 1993.}

\bibitem{Kos}
Kosmann-Schwarzbach, Y.: The Lie bialgebroid of a Poisson-Nijenhuis manifold, {\em Lett. Math. Phys.} {\bf 38} (1996), 421-428.

\bibitem{KM}
Kosmann-Schwarzbach, Y., Magri, F.,: Poisson-Nijenhuis structures. {\em Ann. Inst. H. Poincar\'e Phys. Th\'eor.} {\bf 53} (1990), no. 1, 35--81.

\bibitem{Kos2}
Kosmann-Schwarzbach, Y.: Nijenhuis structures on Courant algebroids. {\em Bull. Brazilian Math. Soc.} {\bf 42} (2011), 625-649.


\bibitem{LMX2}
Laurent-Gengoux, C., Sti\'enon, M., Xu, P.: Holomorphic Poisson manifolds and Lie algebroids, {\em IMRN: Int. Math. Res. Not.} (2008).

\bibitem{LMX}
Laurent-Gengoux, C., Sti\'enon, M., Xu, P.: Integration of holomorphic Lie algebroids. {\em Math. Ann.} {\bf 345}  (2009), 895--923.

\bibitem{LSX}
Lean, M.J., Sti\'enon, M., Xu, P.: Glanon groupoids. {\em Math. Ann.} {\bf 364} (2016), 485–518. 

\bibitem{Mac-book}
Mackenzie, K., General theory of Lie groupoids and Lie
algebroids. {\em London Mathematical Society Lecture Note Series} {\bf 213},
Cambridge University Press, Cambridge, 2005.

\bibitem{Mac-Xu}
Mackenzie, K., Xu, P.: Lie bialgebroids and Poisson groupoids.  {\em
Duke Math. J.}  {\bf 73}  (1994), 415--452.

\bibitem{Mac-Xu2}
Mackenzie, K., Xu, P.: Integration of Lie bialgebroids.  {\em
Topology}  {\bf 39}  (2000), 445--467.

\bibitem{MaMo}
Magri, F., Morosi, C.: Geometrical characterization of integrable Hamiltonian systems through the theory of Poisson-Nijenhuis structures, {\em Quaderno} \textbf{S19} (1984), University of Milan.

\bibitem{Mic}
Michor, P.: A generalization of Hamiltonian mechanics. {\em J. Geom. Phys.} {\bf 2} (1985), 67-82.

\bibitem{Pet}
Petalidou, F.: On the symplectic realization of Poisson-Nijenhuis manifolds. {\em Preprint arXiv:1501.07830v1} (2015).

\bibitem{Rawnsley}
Rawnsley, J.: Flat partial connections and holomorphic structures in $C^\infty$ vector bundles,
{\em Proc. Amer. Math. Soc.} {\bf 73} (1979), 391--397.

\bibitem{StX}
Sti\'enon, M., Xu, P.: Poisson quasi-Nijenhuis manifolds. {\em Comm. Math. Phys.} {\bf 270} (2007), 709--725.

\bibitem {We87}
{Weinstein, A.}: {Symplectic groupoids and {P}oisson
  manifolds.}
{\em Bull. Amer. Math. Soc. (N.S.)}  {\bf 16} (1987), 101--104.


\bibitem{We88}
{Weinstein, A.}: Coisotropic calculus and Poisson groupoids.
{\em J. Math. Soc. Japan} {\bf 40} (1988), 705-727.

\bibitem{YI}
Yano, K., Ishihara, S.:{\em Tangent and cotangent bundles}, Marcel
Dekker, Inc., New York, 1973.

\end{thebibliography}
\end{document}